\definecolor{rltblue}{rgb}{0,0,0.4}
\definecolor{drkred}{rgb}{0.6,0,0}
\definecolor{drkgreen}{rgb}{0,0.4,0}
\declaretheorem{theorem}
\declaretheorem[sibling=theorem]{lemma}
\declaretheorem[sibling=theorem]{corollary}
\declaretheorem[sibling=theorem]{definition}
\newcommand{\A}{\mathcal{A}}
\newcommand{\B}{\mathcal{B}}
\newcommand{\C}{\mathcal{C}}
\newcommand{\SR}{\text{SR}}
\renewcommand{\L}{\mathcal{L}}
\renewcommand{\phi}{\varphi}
\newmdtheoremenv[backgroundcolor=cyan]{theorem-prove}{Theorem}[theorem]
\newmdtheoremenv[backgroundcolor=cyan]{lemma-prove}{Lemma}[theorem]
\newmdtheoremenv[backgroundcolor=cyan]{proposition-prove}{Proposition}[theorem]
\newmdtheoremenv[backgroundcolor=yellow!40]{theorem-check}{Theorem}[theorem]
\newmdtheoremenv[backgroundcolor=yellow!40]{lemma-check}{Lemma}[theorem]
\newmdtheoremenv[backgroundcolor=yellow!40]{proposition-check}{Proposition}[theorem]
\def\abar{{\bar{a}}}
\def\bbar{{\bar{b}}}
\def\hbar{{\bar{h}}}
\def\mbar{{\bar{m}}}
\def\nbar{{\bar{n}}}
\def\xbar{{\bar{x}}}
\def\lom{{<\omega}}
\def\a{\alpha}
\def\b{\beta}
\def\g{\gamma}
\def\om{\omega}
\def\si{\sigma}
\def\Si{\Sigma}
\def\A{\mathcal A}
\def\B{\mathcal{B}}
\def\C{\mathcal{C}}
\def\D{{\mathcal D}}
\def\L{\mathcal L}
\def\itt{{\mathtt {in}}}
\def\Sii{\Si^\itt}
\def\Pii{\Pi^\itt}
\def\Lomom{\L_{\om_1,\om}}
\newtheorem{thm}{Theorem}
\newtheorem*{lemma_NN}{Lemma}
\theoremstyle{remark}
\newtheorem{obs}[thm]{Observation}
\def\upto{\mathbin{\upharpoonright}}
\def\and{\mathrel{\&}}
\def\isom{\cong}
\def\vo{vo}
\def\BA{\mathtt{BA}}
\title{The $\om$-Vaught's Conjecture}
\author{David Gonzalez}
\address{Department of Mathematics\\
University of California, Berkeley}
\email{david\_gonzalez@berkeley.edu}   
\author{Antonio Montalb\'an}
\thanks{The second author was partially supported by NSF grant DMS-1363310.}
\address{Department of Mathematics\\
University of California, Berkeley}
\email{antonio@math.berkeley.edu}
\urladdr{\href{http://www.math.berkeley.edu/~antonio/index.html}{www.math.berkeley.edu/$\sim$antonio}}
\date{Compiled: \today, \currenttime}
\begin{document}
\maketitle

\begin{abstract}
We introduce the $\om$-Vaught's conjecture, a strengthening of the infinitary Vaught's conjecture. We believe that if one were to prove the infinitary Vaught's conjecture in a structural way without using techniques from higher recursion theory, then the proof would probably be a proof of the $\om$-Vaught's conjecture. We show the existence of an equivalent condition to the $\om$-Vaught's conjecture and use this tool to show that all infinitary sentences whose models are linear orders satisfy the $\om$-Vaught's conjecture. 
\end{abstract}

Robert Vaught conjectured in \cite{Vau61} that the number of countable models of any given list of axioms\footnote{on a countable language} must be either countable or continuum, but never in between. 
Despite all the work that has gone into this conjecture over the past sixty years, it remains open.
It is one of the most well-known, long-standing open questions in mathematical logic.
In this paper we will consider the {\em infinitary Vaught's conjecture}, where the list of axioms can be taken to be an infinitary sentence from $\L_{\om_1,\om}$.
An interesting aspect of Vaught's conjecture is that it connects many areas of logic. 
It is unclear where the answer is going to come from.
The original version was for finitary first-order theories.
If a solution of the finitary version comes first, it will probably come from model theory.
The infinitary version, though, has been proved to be equivalent to statements in computability theory (see \cite{MonVC, MonIntermediate}) and descriptive set theory (see \cite[Theorem 11.3.8]{GaoBook}.)

John Steel \cite{Ste78} proved the infinitary Vaught's conjecture for all  theories in the language of orderings that imply the axioms of linear orderings. 
Rubin \cite{Rub74} had already proved it for finitary extensions of linear orderings. 
Gao \cite{Gao01} modified Steel's proof to show the extensions of the theory of linear ordering satisfy the Glimm--Effros dichotomy, providing a descriptive set theoretic reason for why linear orderings satisfy the infinitary Vaught's conjecture. 
Montalb\'an \cite{MonIntermediate} also modified Steel's proof to study the isomorphism relation on linear orderings from a computability theoretic perspective and showed they satisfy the {\em no-intermediate extension property}.

Steel, Gao, and Montalb\'an used higher recursion theoretic techniques, such as $\Sigma^1_1$-bounding and considered models of Scott ranks up to $\om_1^{CK}$ (or actually, $\om_1^{T}$, the first ordinal not computable in the tree representation of the sentence $T$). 
It is rather surprising that a recursion theoretic lemma like $\Si^1_1$-bounding and a recursion theoretic notion like $\om_1^T$ would have to do with the number of countable models of $T$.
In this paper we give a more structural proof of the infinitary Vaught's conjecture for linear orderings.

We also propose a strengthening of Vaught's conjecture that we call $\om$-VC. 
We believe that if one were to prove the infinitary Vaught's conjecture in a structural way without using techniques from higher recursion theory, then the proof would probably be a proof of  $\om$-VC.
Also, we expect that if a counterexample to $\om$-VC were to be found, it will probably contain ideas that could be used to build a counterexample to Vaught's conjecture.

Gerald Sacks \cite[Section 5]{Sac07} defined the {\em Vaught rank} of a theory $T$ as an ordinal that, in a sense, witnesses that $T$ satisfies Vaught's conjecture. 
His definition is rather complicated to describe, and we omit it here.
In a similar vein, we define the {\em Vaught ordinal} of a theory $T$, which we denote as $\vo(T)$,  as least ordinal $\b$ such that 
\begin{itemize}
\item either there are only countably many models of $T$ and they all have Scott rank less than $\b$,
\item or there are uncountably many models of $T$ which are not $\Pii_\b$-elementary equivalent with each other.
\end{itemize}
The ordinal $\vo(T)$ tells us how high we need to go in the $\Pii_\b$ hierarchy of infinitary sentences to make sure that $T$ satisfies Vaught's conjecture.
We will define the Scott rank of a structure in definition \ref{def:ST} below. For now, let us say that for a limit ordinal $\lambda$, a countable structure $\A$ {\em has Scott rank $\geq \lambda$} if for all $\g<\lambda$, there is another countable structure $\B$ that is $\equiv_\g$-equivalent to $\A$ but not isomorphic to $\A$. 
We use the notation $\A\equiv_\b\B$ to say that the structures $\A$ and $\B$ are $\Pii_\b$-elementary equivalent, i.e., that they satisfy the same $\Pii_\b$-sentences. 
It is known that the equivalence relations  $\equiv_\b$ are Borel and that they approximate the isomorphism equivalence relation in the sense that, for countable structures $\A$ and $\B$,  $\A\isom\B\iff (\forall \b<\om_1)\ \A \equiv_\b\B$.
Silver's theorem states that a Borel equivalence relation must have either countably many or continuum many equivalence classes. 
One can then show that, for a counterexample of Vaught conjecture, there would be no upper bound for the value of countable $\b$ such that $\A\equiv_\b \B$ yet $\A\not\cong \B$ among models of $T$, and that for each $\b<\om_1$, there would be only countably many many models of $T$ up to $\equiv_\b$-equivalence. 
It follows that Vaught's conjecture holds if and only if $\vo(T)<\om_1$ for all $\Lomom$ sentences $T$.
It follows from Steel's results \cite{Ste78} that, if $T$ is a sentence extending the theory of linear orderings, then $\vo(T)\leq \om_1^T$.
Note that this upper bound, $\om_1^T$, does not just depend on the quantifier complexity of the sentence $T$, but also on the computational complexity of the real that is coding the tree representation of sentence $T$.
This is rather unexpected. 
The main result of this paper gives a much more reasonable upper bound for the Vaught ordinal of $T$, one that depends only on the quantifier complexity of $T$, and in an additive way:

\begin{theorem}
Consider the vocabulary $\tau = \{\leq\}$ of linear orderings. 
For every $\a<\om_1$ and every $\Pii_\a$-$\tau$-sentence extending the axioms of linear orderings,
\[
\vo(T) \leq \a+\om.
\] 
\end{theorem}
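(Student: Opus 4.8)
Fix a $\Pii_\a$-sentence $T$ extending the axioms of linear orderings and put $\b=\a+\om$. I would first reduce the theorem to a single implication about $T$. Unwinding the definition of $\vo$, it is enough to verify, at this particular $\b$, one of the two defining clauses. If $T$ has uncountably many $\equiv_\b$-classes, then picking one model per class yields uncountably many pairwise $\equiv_\b$-inequivalent models and the second clause holds. Otherwise $T$ has only countably many $\equiv_\b$-classes, and then it suffices to show that every model of $T$ has Scott rank $<\b$: granting this, $\cong$ agrees with $\equiv_\b$ on the models of $T$, so $T$ has only countably many models and the first clause holds. Thus the theorem follows from the implication --- call it $(\star)$ --- that if $T$ is $\Pii_\a$ and some model of $T$ has Scott rank $\geq\a+\om$, then $T$ has uncountably many pairwise $\equiv_\b$-inequivalent models. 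This reduction is an instance of the equivalent condition for the $\om$-Vaught's conjecture established earlier in the paper, so all the real work lies in proving $(\star)$ for linear orders.

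To prove $(\star)$, let $\A\models T$ have Scott rank $\geq\a+\om$ and analyze $\A$ through its iterated Hausdorff condensations $\A=\A^{(0)},\A^{(1)},\A^{(2)},\dots$: each $\A^{(\xi)}$ is a linear order obtained from $\A$ by collapsing $\xi$ times, and $\A$ is recovered as the sum, indexed along $\A^{(\xi)}$, of its $\xi$-th condensation classes. Using the known description of Scott ranks of linear orders in terms of this hierarchy --- the Scott rank of a scattered order is its condensation rank up to a fixed finite additive error, and the Scott rank is large once the hierarchy passes through a dense order --- the hypothesis on $\A$ guarantees that its hierarchy above level $\a$ is nontrivial in a strong way: there is a stage $\xi\geq\a$ at which $\A$ decomposes as a sum $\sum_{d\in I}\A_d$ whose index order $I$ contains an $\om$-chain, an $\om^*$-chain, or a dense suborder, and in which the summands $\A_d$ are not determined up to isomorphism by their $\equiv_{\leq\a}$-types.

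Now I exploit that $T$, being $\Pii_\a$, cannot see this complexity. A Feferman--Vaught-style transfer for $\L_{\om_1,\om}$ gives that replacing each summand $\A_d$ by an $\equiv_\a$-equivalent linear order changes the sum $\sum_{d\in I}\A_d$ only up to $\equiv_\a$, hence produces another model of $T$. A matching estimate in the opposite direction shows that, over an index order containing an $\om$-chain, perturbing the summands through linear orders that are $\equiv_\a$-equivalent but differ at levels $\a+1,\a+2,\dots$ changes the $\equiv_{\a+\om}$-type of the sum. Combining these with the non-rigidity from the previous step --- the relevant $\equiv_{\leq\a}$-class splitting into at least two finer classes --- I would choose the perturbations independently along the $\om$-chain from at least two options, producing continuum many models of $T$ with pairwise distinct $\equiv_{\a+\om}$-types. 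This establishes $(\star)$; when $\A$ is non-scattered the same substitution is performed on a $\mathbb{Q}$-indexed condensation decomposition, which again a $\Pii_\a$ sentence cannot pin down beyond level $\a$.

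The main obstacle is the exact additive bookkeeping that makes the bound $\a+\om$ rather than some larger function of $\a$. One has to show that a $\Pii_\a$ sentence constrains a linear order only through level $\a$ of the condensation hierarchy and of the back-and-forth relations --- not $2\a$, not $\a\cdot\om$ --- and that whatever sits below the top of a bounded-condensation-rank scattered order costs only finitely many further levels of Scott rank. This rests on a sharp, uniform-in-$\a$ analysis of the $\Pii_\g$-theory of a linear-order sum as a function of the index order and the $\equiv$-types of the summands --- the linear-order analogue of Ash's back-and-forth computation for the ordinals $\om^\g$ --- carried through with additive rather than multiplicative error terms. I expect the two-sided Feferman--Vaught estimate --- preservation of $T$ going up, and detectability of the perturbation at level $\a+\om$ going down, both for the same family of substitutions --- to be the technical crux, and this is exactly the kind of structural back-and-forth argument that the equivalent-condition tool from the earlier part of the paper is designed to make tractable.
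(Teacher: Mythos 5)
Your opening reduction is sound and does match the paper: deciding between ``uncountably many $\equiv_{\a+\om}$-classes'' and ``countably many classes, with every model of Scott rank $<\a+\om$'' is exactly how Lemma \ref{lem: VC splitting} is deployed, so the theorem really does come down to your statement $(\star)$. The gaps are all inside the proof of $(\star)$. First, the structural input you lean on is not true as stated: the Scott rank of a scattered linear order is not its Hausdorff condensation rank up to a \emph{fixed} finite additive error (already for ordinals the successor part roughly doubles), and ``Scott rank is large once the hierarchy passes through a dense order'' is simply false --- $\mathbb{Q}$, shuffles, and dense sums such as $\sum_{q\in\mathbb{Q}}\mathbb{Z}$ are non-scattered but have tiny Scott rank. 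The paper never touches the Hausdorff condensation; it works with the convex relation $a\sim_\gamma b\iff\SR((a,b))<\gamma$, which is tailored precisely so that ``many classes'' means ``many pieces of high complexity,'' and which is an equivalence relation only because of the parameterized Scott rank computations (Lemma \ref{leasteq}).

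Second, your decomposition hypothesis assumes away the hard case: positing a stage where the order splits as a sum over an index order containing an $\om$-chain, $\om^*$-chain or dense suborder, with summands not pinned down by their $\equiv_\a$-types, is essentially the easy situation ($L/\sim_{\a+\om}$ with at least three classes, or a non-ordinal quotient, handled by Lemmas \ref{3final} and \ref{nonOrdPlus}). The difficult case is when no such decomposition exists --- the complexity is concentrated in a single block satisfying conditions \textbf{(1)}--\textbf{(3)} and the relevant quotient is an ordinal --- and there the paper needs the replacement lemmas (type omitting together with $(\a,\a+\om)$-smallness, which your sketch never invokes), the comparability lemma via Lindenbaum's theorem, and the closed/non-closed dichotomy on initial segments, ending in a contradiction with condition \textbf{(3)}. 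Third, your ``matching estimate in the opposite direction'' --- that perturbing summands along an $\om$-chain through $\equiv_\a$-equivalent, non-isomorphic orders changes the $\equiv_{\a+\om}$-type of the sum --- is false in general because sums absorb such perturbations: replacing every summand $\om$ of $\sum_{i\in\om}\om=\om^2$ by $\om\cdot 2$ returns $\om^2$ on the nose, so not even the isomorphism type need move. Engineering a perturbation that is \emph{detectable} at level $\a+\om$ is exactly what the splitting formulas $\psi_\leq,\psi_\geq$ about Scott ranks of initial and end segments, condition \textbf{(3)}, and the final case analysis accomplish; you correctly identify this as the crux, but the Feferman--Vaught-style statement you appeal to does not provide it, and without it the construction of continuum many $\equiv_{\a+\om}$-inequivalent models does not go through.
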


This theorem provides a structural understanding of why the infinitary Vaught's conjecture is true for linear orderings. 

\begin{definition}
We say that an $\Lomom$ sentence $S$ satisfies {\em $\om$-VC} if for every $\a<\om_1$ and every $\Pii_\a$ sentence $T$ that implies $S$, we have $\vo(T) \leq \a+\om$.
\end{definition}

The second main result of this paper is the following lemma, which can be seen as a tool to prove that a theory satisfies $\om$-VC.
We need to define a couple of notions first:

\begin{definition}
Given ordinals $\a<\b$, we say that a structure $\A$ is {\em $(\a,\b)$-small} if, for every $\g<\b$, there only countably many $\equiv_\g$-equivalence classes among the structures that are $\equiv_\a$-equivalent to $\A$.
\end{definition}

\begin{lemma}\label{lem: VC splitting}
Let $S$ be an $\Lomom$ sentence.
The following are equivalent:
\begin{enumerate}
\item $S$ satisfies $\om$-VC.			\label{part: VC splitting 1}
\item For every $\a<\om_1$ that is greater than the quantifier rank of $S$%
\footnote{The quantifier rank of $S$ is the least $\b$ such that $S$ is either $\Pii_\b$ or $\Sii_\b$.}
 and every model $\A$ of $S$ that is $(\a,\a+\om)$-small and has Scott rank $\geq \a+\om$, there is another model $\B$ of $S$ of Scott rank $\geq \a+\om$ for which we have $\A\equiv_\a\B$ but $\A\not\equiv_{\a+\om}\B$.		\label{part: VC splitting 2}
\end{enumerate}
\end{lemma}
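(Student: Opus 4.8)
The plan is to first recast the inequality $\vo(T)\le\a+\om$ in a workable form, then prove the two implications separately; the substantive work is in $(2)\Rightarrow(1)$, which is a perfect‑tree construction. For the reformulation, note that the defining property of $\vo(T)$ is upward monotone in the ordinal, so $\vo(T)\le\a+\om$ holds exactly when \emph{either} $T$ has at most $\aleph_0$ models, all of Scott rank $<\a+\om$, \emph{or} $T$ has uncountably many models that are pairwise $\not\equiv_{\a+\om}$. Contrapositively, if $\vo(T)>\a+\om$ then $T$ has a model of Scott rank $\ge\a+\om$ and only countably many $\equiv_{\a+\om}$-classes occur among its models; here one uses that a structure of Scott rank $<\a+\om$ has its isomorphism type fixed by its $\equiv_\g$-type for some $\g<\a+\om$, together with Silver's theorem applied to the Borel equivalence relation $\equiv_{\a+\om}$ on the models of $T$. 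Two standard facts are used throughout: if $T$ is $\Pii_\a$ then its class of models is closed under $\equiv_\a$, and if $\a>\operatorname{qr}(S)$ then so is the class of models of $S$ (so in $(2)$ the clause that $\B$ be a model of $S$ follows from $\A\equiv_\a\B$). We may assume $\a>\operatorname{qr}(S)$ throughout.

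$(1)\Rightarrow(2)$. Given a model $\A$ of $S$ that is $(\a,\a+\om)$-small and has Scott rank $\ge\a+\om$, let $T'$ be the canonical sentence axiomatizing the $\equiv_\a$-class of $\A$, i.e.\ the conjunction of the canonical $\Pii_\a$-sentence and the canonical $\Sii_\a$-sentence of $\A$; this is (equivalent to) a $\Pii_{\a+1}$-sentence, it has $\A$ as a model, and it implies $S$ because all of its models are $\equiv_\a$-equivalent to $\A$. Applying $\om$-VC at level $\a+1$ gives $\vo(T')\le(\a+1)+\om=\a+\om$; since $\A$ has Scott rank $\ge\a+\om$ the first alternative above fails, so $T'$ has uncountably many pairwise $\not\equiv_{\a+\om}$ models, all of them $\equiv_\a$-equivalent to $\A$. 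By $(\a,\a+\om)$-smallness only countably many isomorphism types of Scott rank $<\a+\om$ occur among structures $\equiv_\a$-equivalent to $\A$, so among those uncountably many models we may choose two of Scott rank $\ge\a+\om$ in distinct $\equiv_{\a+\om}$-classes; at least one is not $\equiv_{\a+\om}$-equivalent to $\A$, and it is the required $\B$.

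$(2)\Rightarrow(1)$. Fix a $\Pii_\a$-sentence $T$ implying $S$. If no model of $T$ has Scott rank $\ge\a+\om$, then $\vo(T)\le\a+\om$ by the reformulation and a Silver‑theorem count as above. Otherwise assume toward a contradiction that $\vo(T)>\a+\om$; then only countably many $\equiv_{\a+\om}$-classes occur among models of $T$, and there is a model $\A_\emptyset\models T$ of Scott rank $\ge\a+\om$, which is therefore $(\a,\a+\om)$-small (its $\equiv_\a$-class lies inside the models of $T$). We build a perfect tree $(\A_\sigma)_{\sigma\in 2^{<\omega}}$ of models of $T$ of Scott rank $\ge\a+\om$, together with strictly increasing ordinals $\g_\sigma\in[\a,\a+\om)$, so that $\A_{\sigma 0},\A_{\sigma 1}\equiv_{\g_\sigma}\A_\sigma$, $\A_{\sigma 0}\not\equiv_{\g_{\sigma 0}}\A_{\sigma 1}$, and every $\A_\sigma$ stays $(\a,\a+\om)$-small (a property of its $\equiv_\a$-class, which never changes). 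At a node $\sigma$ of length $n$ we apply $(2)$ at level $\g_\sigma=\a+n$ — legitimate since $\a+n>\operatorname{qr}(S)$, since $(\a,\a+\om)$-smallness yields $(\a+n,\a+\om)$-smallness, and since $(\a+n)+\om=\a+\om$ — obtaining a model $\equiv_{\a+n}$-equivalent to $\A_\sigma$ but differing from it at some level $\a+m$ with $m>n$; set $\A_{\sigma 0}=\A_\sigma$, let $\A_{\sigma 1}$ be this new model, and put $\g_{\sigma 0}=\g_{\sigma 1}=\a+m$. For a branch $x\in 2^\omega$ the sequence $(\A_{x\restriction n})_n$ is back‑and‑forth Cauchy with levels $\g_{x\restriction n}\uparrow\a+\om$, hence has a countable limit $\A_x$ with $\A_x\equiv_{\g_{x\restriction n}}\A_{x\restriction n}$ for all $n$; then $\A_x\models T$, and if $x\neq y$ split at $\sigma$ (say $x$ goes to $\sigma 0$, $y$ to $\sigma 1$) we get $\A_x\equiv_{\g_{\sigma 0}}\A_{\sigma 0}$ and $\A_y\equiv_{\g_{\sigma 0}}\A_{\sigma 1}$, so $\A_x\not\equiv_{\g_{\sigma 0}}\A_y$ and a fortiori $\A_x\not\equiv_{\a+\om}\A_y$. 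Thus $T$ has $2^{\aleph_0}$ many pairwise $\not\equiv_{\a+\om}$ models, contradicting that only countably many $\equiv_{\a+\om}$-classes are represented.

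The delicate point — and the step I expect to be the main obstacle — is the existence, for each branch, of a \emph{countable} limit structure $\A_x$ realizing the coherent tower of $\Pii_{\g_{x\restriction n}}$-types; I would invoke the standard construction of the limit of a back‑and‑forth sequence, noting that $(\a,\a+\om)$-smallness is exactly what keeps the relevant spaces of $\g$-back‑and‑forth types countable at each level $\g<\a+\om$. The remainder is bookkeeping: verifying that $(\a,\a+\om)$-smallness, membership in the models of $T$, and Scott rank $\ge\a+\om$ all propagate along the tree, and keeping the ordinal arithmetic $(\a+n)+\om=\a+\om$ in sight so that every use of $(2)$ and every bound remains pinned at level $\a+\om$.
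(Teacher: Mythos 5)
Your proposal follows the same two-step strategy as the paper: for (\ref{part: VC splitting 1})$\Rightarrow$(\ref{part: VC splitting 2}) you axiomatize the $\equiv_\a$-class of $\A$ by a single sentence of complexity below $\a+\om$ and apply $\om$-VC to it, then use $(\a,\a+\om)$-smallness to extract two high-rank representatives in distinct $\equiv_{\a+\om}$-classes; for (\ref{part: VC splitting 2})$\Rightarrow$(\ref{part: VC splitting 1}) you build a perfect tree of models of $T$ with increasing finite agreement levels and take limits of back-and-forth sequences along branches. In outline this is correct and matches the paper, but two steps need repair. First, your claim that the $\equiv_\a$-class of $\A$ is axiomatized by a $\Pii_{\a+1}$ sentence, obtained as ``the conjunction of the canonical $\Pii_\a$-sentence and the canonical $\Sii_\a$-sentence of $\A$,'' is unjustified: a priori one would have to conjoin uncountably many $\Pii_\a$-sentences, and no canonical single $\Pii_\a$-sentence of this kind is available. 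The standard fact (Lemma \ref{bound}, which is what the paper uses) gives a $\Pii_{2\a}$ axiomatization of the $\equiv_\a$-class. This costs nothing in the end: writing $\a=\lambda+n$ with $\lambda$ limit or zero and $n$ finite, one has $2\a=\lambda+2n$, hence $2\a+\om=\a+\om$, so applying $\om$-VC at level $2\a$ still yields $\vo\leq\a+\om$; but the bookkeeping must be routed through this lemma rather than through the unsupported $\Pii_{\a+1}$ claim.

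Second, in the tree construction you set the agreement level at a node equal to the splitting level $\a+m$. The limit-of-a-back-and-forth-sequence lemma (Lemma XII.6 of \cite{Part2}, which is what both you and the paper need for the branch limits) loses finitely many levels, which is why the paper builds in a margin: agreement at $\a+n_i+3$ along the branch while the distinction is pinned at $\a+n_i$. Without that slack your inference $\A_x\equiv_{\g_{\si 0}}\A_{\si 0}$ may fail by a finite amount, exactly where you need it; the fix is to choose the agreement levels a constant higher than the distinguishing levels, as the paper does. Relatedly, your closing remark misattributes the role of smallness: the limit lemma requires no smallness hypothesis at all (the limit is automatically a countable structure), and $(\a,\a+\om)$-smallness enters only where you and the paper otherwise use it, namely to count isomorphism types of rank $<\a+\om$ in direction (\ref{part: VC splitting 1})$\Rightarrow$(\ref{part: VC splitting 2}) and to reduce to the small case in direction (\ref{part: VC splitting 2})$\Rightarrow$(\ref{part: VC splitting 1}).
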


Before moving on, let as make a few quick observations about $\om$-VC.
First, we note if one wanted to prove that all sentences satisfy $\om$-VC, it would be enough to consider only $\Pii_2$ theories:

\begin{obs}
The following are equivalent:
\begin{itemize}
\item Every $\Lomom$ sentence satisfies  $\om$-VC.
\item For every $\Pii_2$ sentence $T$, $\vo(T)\leq \om$.
\end{itemize}
\end{obs}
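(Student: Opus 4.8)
The plan is to prove the two implications separately. The forward direction is essentially a tautology: if every $\Lomom$ sentence satisfies $\om$-VC and $T$ is $\Pii_2$, then $T$ itself satisfies $\om$-VC, and since $T$ is a $\Pii_2$ sentence that implies $T$, instantiating the definition of $\om$-VC at $\a=2$ gives $\vo(T)\le 2+\om=\om$.

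For the reverse direction, since $\om$-VC for a sentence $S$ only constrains the sentences that imply $S$, it suffices to prove that $\vo(T)\le\a+\om$ for every $\a<\om_1$ and every $\Pii_\a$ sentence $T$ whatsoever. Fix such $\a$ and $T$, in a vocabulary $\tau$. The idea is to apply the hypothesis to a Morleyization of $T$. Relativizing the usual Morleyization to the countably many subformulas of $T$, let $\tau^*=\tau\cup\{R_\phi:\phi\text{ a subformula of }T\}$, with $R_\phi$ a relation symbol whose arity equals the number of free variables of $\phi$; expand each $\tau$-structure $\A$ to the $\tau^*$-structure $\A^*$ that interprets $R_\phi$ by $\{\bar a:\A\models\phi(\bar a)\}$; and let $T^*$ be the conjunction of the evident definitional axioms — for instance $\forall\bar x\,\bigl(R_{\forall\bar y\,\psi}(\bar x)\leftrightarrow\forall\bar y\,R_\psi(\bar x,\bar y)\bigr)$, and analogously for existential quantifiers, countable conjunctions and disjunctions, and (negated) atoms — together with the $0$-ary atomic sentence $R_T$. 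A direct check shows each of these axioms is $\Pii_2$, so $T^*$ is $\Pii_2$; and an induction on subformulas shows that a $\tau^*$-structure $\M$ models $T^*$ exactly when $\M=(\M\restrict\tau)^*$ with $\M\restrict\tau\models T$, so $\A\mapsto\A^*$ is an isomorphism-preserving bijection from the models of $T$ onto the models of $T^*$.

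It then remains to transfer the conclusion $\vo(T^*)\le\om$, which holds by hypothesis, back down to $T$. The technical ingredient is the standard bookkeeping fact that substituting formulas of complexity $\Sii_\g$ or $\Pii_\g$ with $\g\le\a$ for the ``atoms'' of a $\Pii_\b$ formula produces a formula of complexity at most $\Pii_{\a+\b}$ (a Boolean combination of $\Pii_\a$ formulas when $\b=0$). Indeed, given a $\Pii_\b$ $\tau^*$-sentence $\chi$, write $\chi^-$ for the $\tau$-sentence obtained by replacing each $R_\phi$ with $\phi$; then $\A^*\models\chi$ iff $\A\models\chi^-$ for every $\tau$-structure $\A$, and by the bookkeeping fact $\A\equiv_{\a+\b}\B$ implies $\A^*\equiv_\b\B^*$ for all $\tau$-structures $\A,\B$ and all $\b<\om_1$. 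Combined with the bijection, this shows $\SR(\A^*)<\om$ implies $\SR(\A)<\a+\om$: if $n\ge 2$ is such that $\M\equiv_n\A^*$ forces $\M\cong\A^*$ for all $\tau^*$-structures $\M$, then any countable $\tau$-structure $\B$ with $\B\equiv_{\a+n}\A$ satisfies $\B^*\equiv_n\A^*$, hence $\B^*\cong\A^*$, hence $\B\cong\A$.

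Finally, since $\vo(T^*)\le\om$, either there are only countably many models of $T^*$, all of Scott rank $<\om$, or there are uncountably many models of $T^*$ that are pairwise non-$\equiv_\om$-equivalent. In the first case the bijection and the Scott-rank transfer give only countably many models of $T$, all of Scott rank $<\a+\om$. In the second case the bijection, together with the implication $\A\equiv_{\a+\om}\B\Rightarrow\A^*\equiv_\om\B^*$, gives uncountably many models of $T$ that are pairwise non-$\equiv_{\a+\om}$-equivalent. Either way $\vo(T)\le\a+\om$, as required. I expect the genuine obstacle to be the complexity accounting: arranging that the Morleyization lands exactly at $\Pii_2$ rather than at a higher level, proving the substitution lemma at level $\Pii_{\a+\b}$ with the correct treatment of the successor stages, and checking that the Scott-rank transfer is compatible with the precise notion of Scott rank from Definition~\ref{def:ST}.
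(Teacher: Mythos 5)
Your argument is correct and is exactly the route the paper takes: the paper disposes of this observation in one line by "taking Morleyizations," and your proposal simply fills in that Morleyization together with the standard $\Pii_{\a+\b}$ substitution bookkeeping and the resulting transfer of Scott ranks and $\equiv_{\a+\om}$-classes. No substantive difference in approach, and the details you flag (landing at $\Pii_2$, the shift by $\a$) are handled as the paper intends.
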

This observation is easily verified by taking Morleyizations (see for instance \cite[Chapter II.5]{Part2}).

Let us mention the following interesting example:
The theory, $\BA$, of Boolean algebras, is $\Pii_2$ and has Vaught ordinal $\om$.
This means that the condition that $\vo(T)\leq \om$ in the observation above cannot be strengthened. 
It is not known if the extensions of $\BA$ satisfy Vaught's conjecture or $\om$-VC.
In unpublished work, Montalb\'an and Simon \cite{MSunpub} showed that if there is an extension of $\BA$ that does not satisfy Vaught's conjecture, then there is one that, for some ordinal $\a$ and Boolean algebras $\B_i$ of Scott rank $\leq \a$, it says that its models are $\equiv_{\a+\om}$-equivalent to the $\om$-sum $\bigoplus_{i\in\om}\B_i$.
They also proved that for every $\b$ there exists Boolean algebras $\B_i$ of Scott rank $<\om+\om$ such that $\bigoplus_{i\in\om}\B_i$ has Scott rank greater than $\b$.

Let us also remark that $\om$-VC plays a similar role as the {\em Martin's model-theoretic conjecture}, which is about complete {\bf finitary} first-order theories with less than continuum many countable models.
It plays a similar role in the sense that it is suggested that if Vaught's conjecture were to be proved by model theoretic means, then it would probably be through Martin's model-theoretic conjecture.
It is also somewhat similar in the sense that it implies that if a complete finitary first-order theories has less than continuum many countable models, then all those models have Scott rank less than or equal to a certain bound, that for Martin's conjecture is $\om+\om$.
However, Martin's conjecture and $\om$-VC are incomparable, and neither implies the other one as far as we know.
Wagner \cite{Wag82} had proposed a strengthening of Martin's conjecture which included theories with continuum many countable models, which turned out to be false
(see \cite{Gaob01}).


\section{Preliminaries on Scott Rank and Notation}

In this section we will explain the notation and basic concepts used in this paper. 
We refer the reader to \cite[Chapter 2]{Part2} for more background on the concepts defined here.

The logic used in this paper is $L_{\omega_1,\omega}$. 
For a structure $\mathcal{M}$, a tuple $\mbar$ from $M$, and an ordinal $\alpha$, the {\em $\Pii_\alpha$-type of $\mbar$ in $\mathcal{M}$} is the set of true $\Pii_\alpha$ formulas about $\mbar$ in $\mathcal{M}$ (similar for $\Sii_\alpha$). We write $(\mathcal{M},\mbar)\leq_\alpha (\mathcal{N},\nbar)$ if the $\Pii_\alpha$-type of $\mbar$ in $\mathcal{M}$ is contained in the $\Pii_\alpha$ type of $\nbar$ in $\mathcal{N}$. This is called the $\alpha$ \textit{back and forth relation}. 
We then have that $(\mathcal{M},\mbar)\equiv_\alpha (\mbar,\mathcal{N})$ if and only if $\mathcal{M}\leq_\alpha \mathcal{N}$ and $\mathcal{N}\leq_\alpha \mathcal{M}$.

One of the primary motivations for studying $L_{\omega_1,\omega}$ is that it has enough power to describe automorphism orbits within a structure. This leads to the following definition.

\begin{definition}\label{def:ST}
The \textit{Scott rank} of a structure $\mathcal{M}$ is given by the least $\alpha\in\omega_1$ for which there exists a finite tuple of parameters $\mbar$ such that the automorphism orbit of every tuple, $\xbar\in\mathcal{M}$, is $\Sii_\alpha$ definable  over $\mbar$. 
We denote this $\SR(\mathcal{M})$.
\end{definition}

The definition above is taken from \cite{MonSR} and \cite{Part2}. In recent work, it is sometimes called the parameterized Scott rank and is contrasted with an unparameterized version that does not allow for a finite tuple of parameters to be used in the definition of the automorphism orbits. For our (parameterized) notion of Scott rank, we have the following equivalence. 

\begin{theorem}
\cite{MonSR} $\mathcal{M}$ has Scott Rank $\alpha$ if and only if there is a $\Sii_{\alpha+2}$ sentence that is true about $\mathcal{M}$ and not true about any other countable structure.
\end{theorem}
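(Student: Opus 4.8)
The plan is to prove the two monotone implications, namely that $\SR(\mathcal{M})\leq\alpha$ if and only if $\mathcal{M}$ has a $\Sii_{\alpha+2}$ Scott sentence (a sentence true of $\mathcal{M}$ and of no other countable structure); since both ``$\SR(\mathcal{M})\leq\alpha$'' and ``having a $\Sii_{\alpha+2}$ Scott sentence'' are upward closed in $\alpha$, the stated equality of the two least such ordinals follows at once.

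For the forward direction, suppose $\SR(\mathcal{M})\leq\alpha$ and fix a tuple $\mbar$ witnessing this, so that for every tuple $\abar$ the orbit of $\abar$ over $\mbar$ is defined by some $\Sii_\alpha$ formula $\rho_{\abar}(\mbar,\xbar)$. There are only countably many such orbits; fixing a representative for each, I would form the parameterized sentence
\[
\theta(\mbar)=\bigwwedge_{\abar}\forall \xbar\Big(\rho_{\abar}(\mbar,\xbar)\rightarrow \big(\bigwwedge_{b}\exists y\,\rho_{\abar b}(\mbar,\xbar,y)\ \wedge\ \forall y\,\bigvvee_{b}\rho_{\abar b}(\mbar,\xbar,y)\big)\Big),
\]
where $b$ ranges over the one-point extensions of the orbit of $\abar$. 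A routine complexity count (each $\rho$ is $\Sii_\alpha$, a countable conjunction of $\Sii_\alpha$ formulas is $\Pii_{\alpha+1}$, and universal quantification together with the implication keep us inside $\Pii_{\alpha+1}$) shows $\theta(\mbar)\in\Pii_{\alpha+1}$, so $\sigma:=\exists\mbar\,\theta(\mbar)$ is $\Sii_{\alpha+2}$. I would then check that $\sigma$ is a Scott sentence: if $(\mathcal{N},\nbar)\models\theta$, then the ``forth'' conjuncts applied in $\mathcal{N}$, together with the fact that the orbits are realized in $\mathcal{M}$, give a back-and-forth system matching $\mbar$ to $\nbar$ and preserving the orbit formulas, whence $\mathcal{M}\isom\mathcal{N}$ since both are countable. (For small $\alpha$ one conjoins the quantifier-free type to ensure the matching respects atomic relations; this is routine.)

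For the reverse direction, suppose $\mathcal{M}$ has a $\Sii_{\alpha+2}$ Scott sentence $\sigma$. Writing $\sigma$ in normal form as $\bigvvee_i \exists \mbar_i\,\psi_i(\mbar_i)$ with each $\psi_i\in\Pii_{\alpha+1}$, and choosing a disjunct and a witness with $\mathcal{M}\models\psi(\mbar_0)$, it suffices to show that every orbit over $\mbar_0$ is $\Sii_\alpha$ definable, for then $\mbar_0$ witnesses $\SR(\mathcal{M})\leq\alpha$. Suppose some orbit, say of $\abar$ over $\mbar_0$, is not $\Sii_\alpha$ definable; equivalently, for every $\Sii_\alpha$ formula $\phi(\mbar_0,\xbar)$ true of $\abar$ there is a realization of $\phi$ in $\mathcal{M}$ lying outside the orbit of $\abar$. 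I would use this failure of isolation to run a back-and-forth/forcing construction producing a countable $\mathcal{N}\not\isom\mathcal{M}$ together with a tuple $\nbar$ for which $(\mathcal{M},\mbar_0)\leq_{\alpha+1}(\mathcal{N},\nbar)$; since $\psi$ is a single $\Pii_{\alpha+1}$ formula true of $\mbar_0$, this forces $\mathcal{N}\models\psi(\nbar)$ and hence $\mathcal{N}\models\sigma$, contradicting that $\sigma$ is a Scott sentence. Thus every orbit over $\mbar_0$ is $\Sii_\alpha$ definable, as required.

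The main obstacle is the construction in the reverse direction: passing from ``$\mathcal{M}$ is pinned down up to isomorphism by a $\Pii_{\alpha+1}$ formula relative to $\mbar_0$'' to ``all orbits over $\mbar_0$ are $\Sii_\alpha$ definable''. The delicate points are (i) getting the level count exactly right, so that a failure of $\Sii_\alpha$ definability is converted into a failure of rigidity that is still invisible at level $\leq_{\alpha+1}$, and (ii) building the alternative model $\mathcal{N}$ realizing the offending type ``improperly'' while satisfying every $\Pii_{\alpha+1}$ formula true of $(\mathcal{M},\mbar_0)$. This is essentially an omitting-types argument carried out inside the back-and-forth hierarchy, and it is where the real work of the robuster Scott rank analysis lies.
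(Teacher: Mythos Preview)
The paper does not give its own proof of this theorem: it is stated as a citation to \cite{MonSR} and used as a black box, so there is nothing in the present paper to compare your argument against.

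That said, your sketch is the standard route taken in the cited source. The forward direction is exactly right: from $\Sii_\alpha$ orbit-definitions over a parameter tuple $\mbar$ one writes down the $\Pii_{\alpha+1}$ sentence asserting that the orbit formulas cohere under one-point extensions, and existentially quantifies $\mbar$ to get a $\Sii_{\alpha+2}$ Scott sentence. For the reverse direction your outline is correct in shape, and you are honest that this is where the work lies. The precise mechanism in \cite{MonSR} is that if some $\Pii_\alpha$-type over $\mbar_0$ is not $\Sii_\alpha$-supported, then by the infinitary type-omitting theorem one can build a countable $(\mathcal{N},\nbar)$ realizing the full $\Pii_{\alpha+1}$-type of $(\mathcal{M},\mbar_0)$ while omitting that $\Pii_\alpha$-type in the orbit of the offending tuple; this $\mathcal{N}$ satisfies $\psi(\nbar)$ and hence $\sigma$, yet cannot be isomorphic to $\mathcal{M}$. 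Your phrasing in terms of $\leq_{\alpha+1}$ is equivalent. The only place to be careful is your point (i): one must check that non-$\Sii_\alpha$-definability of an orbit is the same as the relevant $\Pii_\alpha$-type being non-$\Sii_\alpha$-supported, which uses that automorphism orbits in countable structures are exactly the realization sets of $\Pii_\alpha$-types for large enough $\alpha$; this is routine but should be stated.
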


We call such a sentence a $\textit{Scott Sentence}$ for $\mathcal{M}$. Note that for a fixed $\mathcal{M}$ with $\SR(\mathcal{M})=\alpha$, this result gives that statements of the form $\mathcal{N}\cong\mathcal{M}$ are $\Sii_{\alpha+2}$.

Of the many benefits of this notion of Scott rank, and the one most useful in this paper is that the Scott rank of a structure can be seen by a formula of moderate complexity from inside the structure (see \cite[Lemma II.67]{Part2}). 

\begin{lemma}\label{intsr}
For a fixed vocabulary, given any ordinal $\alpha,$ there is a $\Pii_{2\alpha +3}$ sentence $\rho_\alpha$ such that
$$\mathcal{A}\models \rho_\alpha \iff \SR(\mathcal{A})\geq \alpha.$$
\end{lemma}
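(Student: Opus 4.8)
The claim is that there is a $\Pii_{2\alpha+3}$ sentence $\rho_\alpha$ defining "$\SR(\A)\ge\alpha$" uniformly. I would build $\rho_\alpha$ by induction on $\alpha$, expressing internally the statement that the structure's "automorphism-orbit-definability" does not happen below level $\alpha$. The starting point is the theorem of Montalbán quoted above: $\SR(\A)\ge\alpha$ iff there is \emph{no} $\Sii_{\alpha+1}$ sentence (allowing finitely many parameters) that is a Scott sentence, i.e. iff for every finite tuple $\mbar$ there is some tuple $\xbar$ whose automorphism orbit is \emph{not} $\Sii_\alpha$-definable over $\mbar$. The key translation is the classical back-and-forth characterization: the automorphism orbit of $\xbar$ over $\mbar$ is $\Sii_{\gamma}$-definable over $\mbar$ iff $\xbar$ is determined up to $\equiv_\gamma^{\mbar}$ within $(\A,\mbar)$, equivalently, whenever $(\A,\mbar\ybar)\equiv_\gamma(\A,\mbar\xbar)$ then actually $\ybar$ is in the orbit of $\xbar$. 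Crucially, each relation "$(\A,\mbar\xbar)\le_\gamma(\A,\mbar\ybar)$" is, from inside $\A$, a $\Pii_\gamma$ condition on the tuple $\mbar\xbar\ybar$ (and its converse a $\Sii_\gamma$ condition), since the $\gamma$-back-and-forth relation is the conjunction over all $\Pii_\gamma$ formulas of the implication between their truth on the two tuples — this is exactly the computation carried out in \cite[Chapter 2]{Part2}.

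I would then write, schematically,
\[
\rho_\alpha \;\equiv\; \bigwwedge_{\mbar}\ \bigvvee_{\xbar}\ \Big(\text{"the }\equiv_\alpha^{\mbar}\text{-class of }\xbar\text{ is not the orbit of }\xbar\text{"}\Big),
\]
where the inner matrix says: there exists $\ybar$ with $(\A,\mbar\xbar)\equiv_\alpha(\A,\mbar\ybar)$ but $\ybar\notin\mathrm{orb}(\xbar)$. The non-isomorphism-of-orbit clause "$\ybar\notin\mathrm{orb}(\xbar)$" is itself expressed via Scott-rank bookkeeping: $\ybar$ fails to be in the orbit of $\xbar$ iff $\mbar\xbar$ and $\mbar\ybar$ are separated at some level, and one uses the \emph{previously constructed} $\rho_\gamma$'s (relativized to parameters) to control at which level separation occurs. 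Counting quantifiers: the "$\equiv_\alpha$" part contributes a $\Pii_\alpha$-and-$\Sii_\alpha$ pair, i.e. a block absorbed into $\Pii_{\alpha+1}$ or so; the orbit-non-membership part, built from $\rho_{<\alpha}$ which by induction sits at level $\lesssim 2\alpha$, contributes the bulk; and the two outer quantifier blocks $\bigwwedge_{\mbar}\bigvvee_{\xbar}$ each add one alternation. Careful accounting — using that $\Pii_\gamma$-type-equality is $\Pic$-over-$\Sic$ and that a conjunction of a $\Pii_\beta$ and a $\Sii_\beta$ sits inside $\Pii_{\beta+1}$, together with the doubling coming from needing both "$\le_\alpha$" directions and the nested $\rho_\gamma$'s — should land the total at exactly $2\alpha+3$. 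For the successor and limit steps the bound is additive/supremum-respecting in the expected way.

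The main obstacle I expect is the precise quantifier-complexity bookkeeping: getting the constant to be exactly $2\alpha+3$ rather than $2\alpha+O(1)$ requires care about (i) how the internal back-and-forth relations $\le_\gamma$ are coded as infinitary formulas and at what level, (ii) the fact that asserting "orbit $\ne$ $\equiv_\alpha$-class" needs an \emph{existential} witness $\ybar$ \emph{and} a proof it is not in the orbit, which is where the doubling $\gamma\mapsto 2\gamma$ enters, and (iii) verifying the inductive bound passes cleanly through limits (where $\SR(\A)\ge\lambda$ unwinds to $\bigwwedge_{\gamma<\lambda}\rho_\gamma$, which is $\Pii_\lambda$, comfortably below $\Pii_{2\lambda+3}$). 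Everything else — that the displayed sentence semantically captures $\SR(\A)\ge\alpha$ — follows from Montalbán's theorem and the standard orbit/back-and-forth dictionary; the real work, already done in \cite[Lemma II.67]{Part2}, is pinning down the syntax so the complexity count is tight.
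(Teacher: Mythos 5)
First, note that the paper does not prove this lemma at all: it is imported verbatim from \cite[Lemma II.67]{Part2}, so there is no in-paper argument to match your sketch against. Your overall shape — internalize the back-and-forth relations $\leq_\gamma$ as formulas of complexity about $2\gamma$, quantify over parameter tuples $\mbar$ and witness tuples, and count quantifier blocks to land at $2\alpha+3$ — is indeed the shape of the argument in \cite{Part2}, and the limit-stage remark and the upper-bound-only nature of the count are fine.

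The genuine gap is the clause ``$\ybar\notin\mathrm{orb}(\xbar)$''. Non-automorphicity of a pair of tuples has no a priori bounded infinitary complexity, and your proposed repair — using the previously constructed $\rho_\gamma$'s ``relativized to parameters'' to control the level at which $\mbar\xbar$ and $\mbar\ybar$ are separated — does not work as stated: $\rho_\gamma$ asserts something about the global Scott rank of $\A$, not about whether two given tuples lie in the same orbit, and there is no countable bound, uniform in $\A$, on the ordinal at which non-automorphic tuples become $\not\equiv_\gamma$-separated (that ordinal can be as large as $\SR(\A)$ itself, which is exactly what $\rho_\alpha$ is trying to express). The known resolution, which is the actual content of \cite[Lemma II.67]{Part2} together with the machinery of \cite{MonSR}, is to replace ``not automorphic'' by a bounded-complexity surrogate: the orbit of $\xbar$ over $\mbar$ fails to be $\Sii_\beta$-definable iff $\xbar$ is $\beta$-free over $\mbar$, an asymmetry condition on the internal relations $\leq_\gamma$ ($\gamma\leq\beta$) involving auxiliary extension tuples, each instance of which is $\Pii_{2\gamma}$ or $\Sii_{2\beta}$ by Lemma \ref{bound}. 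That equivalence is a nontrivial theorem (it is also what makes your other ``dictionary'' step — orbit $\Sii_\gamma$-definable iff the orbit coincides with the $\equiv_\gamma$-class, whose right-to-left direction is not classical back-and-forth but again Montalb\'an's theorem — legitimate), and your sketch assumes it away at precisely the point where the work lies. There is also a minor indexing slip: your displayed $\rho_\alpha$, which demands for every $\mbar$ an orbit that is not $\Sii_\alpha$-definable over $\mbar$, expresses $\SR(\A)\geq\alpha+1$ rather than $\SR(\A)\geq\alpha$; for successor $\alpha=\beta+1$ the inner level must be $\beta$, and for limits one takes the conjunction over $\beta<\alpha$ as you indicate.
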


Note that this immediately gives that stating $\SR(\mathcal{A})=\alpha$, i.e. $\mathcal{A}\models \rho_\alpha\land\lnot \rho_{\alpha+1}$ is $\Sigma_{2\alpha +4}$. A useful consequence of this is that if $\alpha=\lambda+n$ where $\lambda$ is a limit ordinal and $n\in\omega$, this statement is $\Sigma_{\lambda + 2n +4}$. In other words, the complexity of the sentence stops well short of the first limit ordinal strictly larger than $\alpha$.

It is also true that the back and forth relations among structures can be defined from inside the structure. 
(See \cite[Lemma VI.14]{Part2}.) 

\begin{lemma}\label{bound}
Let $\mathcal{L}$  be a structure. For any tuple $\abar\in L$ and $\beta<\gamma$ there are  $\Pii_{2\beta}$ formulas $\varphi_{\abar,\beta}(x)$ and   $\psi_{\abar,\beta}(x)$ such that for any $\mathcal{K}$ and tuple $\bbar\in\mathcal{K}$
\[
\mathcal{K}\models \varphi_{\abar,\beta}(\bbar) \iff (\mathcal{L},\abar)\leq_\beta(\mathcal{K},\bbar),
\]
and
\[
\mathcal{K}\models \psi_{\abar,\beta}(\bbar) \iff (\mathcal{L},\abar)\geq_\beta(\mathcal{K},\bbar).
\]
\end{lemma}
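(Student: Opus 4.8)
The plan is to construct the formulas $\varphi_{\abar,\beta}$ and $\psi_{\abar,\beta}$ by simultaneous transfinite recursion on $\beta$, exploiting the standard recursive (``back-and-forth'') characterization of the relations $\leq_\beta$ and $\geq_\beta$. Recall the Ash--Knight characterization: $(\L,\abar)\leq_\beta(\K,\bbar)$ holds if and only if for every $\gamma<\beta$ and every finite tuple $\dbar\in K$ there is a finite tuple $\cbar\in L$ with $(\K,\bbar\dbar)\leq_\gamma(\L,\abar\cbar)$, equivalently $(\L,\abar\cbar)\geq_\gamma(\K,\bbar\dbar)$. The crucial feature is the swap of the two structures in passing from level $\beta$ down to level $\gamma$; this is exactly why the families $\varphi$ (coding $\leq$) and $\psi$ (coding $\geq$) must be built together, each referring to the other at the lower levels.

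For the base case I set both $\varphi_{\abar,0}(\xtt)$ and $\psi_{\abar,0}(\xtt)$ equal to the quantifier-free formula $\bigwwedge\{\theta(\xtt):\theta\text{ quantifier-free and }\L\models\theta(\abar)\}$, which asserts that $\xtt$ realizes the same quantifier-free type as $\abar$ (note $\leq_0$ and $\geq_0$ coincide, as the quantifier-free type is closed under negation); this is a legitimate $\Lomom$ formula because it is a countable conjunction, and it is $\Pii_0$. For $\beta>0$ I read the formulas directly off the characterization, using the key translation principle that a quantifier ranging over the variable structure $\K$ becomes a genuine $\forall$ or $\exists$, whereas one ranging over the fixed countable structure $\L$ becomes an infinitary conjunction or disjunction indexed by $L^{<\om}$ (legitimate precisely because $\L$ is countable). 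This gives
\[
\varphi_{\abar,\beta}(\xtt)=\bigwwedge_{\gamma<\beta}\forall\ytt\bigvvee_{\cbar\in L^{<\om}}\psi_{\abar\cbar,\gamma}(\xtt\ytt),
\qquad
\psi_{\abar,\beta}(\xtt)=\bigwwedge_{\gamma<\beta}\bigwwedge_{\cbar\in L^{<\om}}\exists\ytt\,\varphi_{\abar\cbar,\gamma}(\xtt\ytt),
\]
where in each line $\ytt$ and $\cbar$ range over matching finite lengths (a further countable conjunction or disjunction over the length handles all arities without affecting complexity). Correctness of these definitions is then immediate from the recursive characterization together with the induction hypothesis.

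It remains to verify the complexity bound $\Pii_{2\beta}$, which I prove for both families simultaneously by induction. Assuming $\varphi_{\abar\cbar,\gamma},\psi_{\abar\cbar,\gamma}\in\Pii_{2\gamma}$ for all $\gamma<\beta$, one checks: the infinitary disjunction $\bigvvee_{\cbar}\psi_{\abar\cbar,\gamma}$ of $\Pii_{2\gamma}$ formulas is $\Sii_{2\gamma+1}$, and prefixing $\forall\ytt$ yields a $\Pii_{2\gamma+2}$ formula; dually, $\exists\ytt\,\varphi_{\abar\cbar,\gamma}$ is $\Sii_{2\gamma+1}$, and the infinitary conjunction $\bigwwedge_{\cbar}$ of these is again $\Pii_{2\gamma+2}$. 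Since $\gamma<\beta$ forces $2\gamma+2\leq2\beta$, every conjunct lies in $\Pii_{2\beta}$, and as $\Pii_{2\beta}$ is closed under countable conjunction the outer $\bigwwedge_{\gamma<\beta}$ keeps us in $\Pii_{2\beta}$ (for limit $\beta$ this uses $\sup_{\gamma<\beta}(2\gamma+2)=2\beta$).

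The step I expect to require the most care is the bookkeeping that produces exactly the index $2\beta$ rather than $\beta$ or $2\beta+1$. The doubling arises because each back-and-forth level contributes two $\Pi/\Sigma$ alternations, one from the genuine quantifier over $\K$ and one from the infinitary Boolean connective ranging over $\L$; getting the base level to sit at $\Pii_0$, so that these two contributions compound cleanly as the recursion climbs, depends on treating infinitary quantifier-free formulas as $\Pii_0$. Keeping the structure-swap and the dual roles of $\varphi$ and $\psi$ straight throughout the mutual recursion is the other point at which an error would most easily creep in.
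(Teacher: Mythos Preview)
Your argument is correct: the mutual transfinite recursion based on the Ash--Knight back-and-forth characterization is exactly the standard construction, and your complexity bookkeeping (each level contributes one genuine quantifier and one infinitary connective over $L^{<\om}$, for two alternations per back-and-forth step) is accurate. The paper itself gives no proof of this lemma, simply citing \cite[Lemma VI.14]{Part2}; your write-up is essentially what that reference contains.
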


Notice that in the same manner as the previous argument, we can note that the complexity of this formula is well below the least limit ordinal strictly larger than $\beta$.


\section{Proof of the main lemma}

In this section we give a proof of Lemma \ref{lem: VC splitting}, which will be key for our proof that linear orderings satisfy $\om$-VC in the next sections.

\begin{lemma_NN}[Lemma \ref{lem: VC splitting}]
Let $S$ be an $\Lomom$ sentence.
The following are equivalent:
\begin{enumerate}
\item $S$ satisfies $\om$-VC.			
\item For every $\a<\om_1$ that is greater than the quantifier rank of $S$ and every model $\A$ of $S$ that is $(\a,\a+\om)$-small and has Scott rank $\geq \a+\om$, there is another model $\B$ of $S$ of Scott rank $\geq \a+\om$ for which we have $\A\equiv_\a\B$ but $\A\not\equiv_{\a+\om}\B$.		
\end{enumerate}
\end{lemma_NN}

\begin{proof}
To see that (\ref{part: VC splitting 1}) implies (\ref{part: VC splitting 2}), consider a model $\A$ of $S$ that is $(\a,\a+\om)$-small and has Scott rank $\geq \a+\om$.
Let $T$ be the $\Pii_{2\a}$-sentence from Lemma \ref{bound} satisfying that $\B\models T$ if and only if $\A\equiv_\a \B$.
Since $S$ satisfies $\om$-VC and $\A$ already has rank $\geq\a+\om$, we must then have that $T$ has continuum many models up to $(\a+\om)$-elementary equivalence.
To obtain (\ref{part: VC splitting 2}), we need to show that at least two of these models has Scott rank $\geq \a+\om$.
If not, there would be some $\g<\a+\om$ such that for continuum many of these $(\a+\om)$-elementary equivalence clasess, there would be models of Scott rank $\g$.
But then, we would have continuum many $\g$-equivalence classes among the models of $T$, contradicting that $\A$ is $(\a,\a+\om)$-small.

Suppose now that (\ref{part: VC splitting 2}) holds, let $\a$ be an ordinal, and let $T$ be a $\Pii_\a$ extension of $S$.
We need to show that $\vo(T)\leq \a+\om$.
If all models of $T$ have  Scott rank $<\a+\om$, then $T$ has Vaught ordinal $\leq\a+\om$ as wanted. 
So, let us suppose that $T$ that has a model $\A$ of Scott rank $\geq\a+\om$.
If $\A$ were not $(\a,\a+\om)$-small, we would immediately have a $\g<\a+\om$ for which there are continuum many models of $T$ up to $\equiv_\g$-equivalence and $T$ would have Vaught ordinal $\leq\a+\om$ as wanted. 
So suppose $\A$ is $(\a,\a+\om)$-small.

We will build a continuum many models $\{\B_X: X\in 2^{\om}\}$ of $T$ that are not $(\a+\om)$-elementary equivalent.
For that, we first build a tree $\{\B_\si: \si\in 2^{\lom}\}$ of models of $T$ of Scott rank $\geq\a+\om$ and an increasing sequence of natural numbers $\{n_i: i\in\om\}$ such that
\begin{itemize}
\item If $\si$ and $\tau$ are incompatible strings of length $i$, then $\B_{\si}\not\equiv_{\a+n_i} \B_\tau$.
\item If $\si\subseteq \tau$ with $|\si|=i$, then $\B_{\si}\equiv_{\a+n_i+3} \B_\tau$.
\end{itemize}
 
Let $\B_\emptyset$ be $\A$ and let $n_0=0$.
Suppose we have defined $\B_\si$ for all $\si\in 2^\om$ of length $i$.
Fix such a $\si$, and let us define $\B_{\si 0}$ and $\B_{\si 1}$.
Let $\B_{\si 0}=\B_{\si}$.
Since $\B_\si\equiv_\a \A$,  $\B_\si$ must also be $(\a,\a+\om)$-small, and in particular $(\a+n_i,\a+\om)$-small.
By (\ref{part: VC splitting 2}), there is a model $\B_{\si 1}$ of $T$ of Scott rank $\geq \a+\om$ for which we have $\B_{\si 0}\equiv_{\a+n_i+3}\B_{\si 1}$ but $\B_{\si 0}\not\equiv_{\a+\om}\B_{\si 1}$.
It is known that for limit ordinals $\lambda$, $\equiv_\lambda$ is the limit of $\equiv_\g$ for $\g<\lambda$; that is, if $\C\not\equiv_\lambda\D$, then there is some $\g<\lambda$ such that $\A\not\equiv_\g\B$ (this follows easily from the back-and-forth definition of $\equiv_\lambda$, see \cite[Definition II.3.2]{Part2}).
Let $n_\si$ be such that $\B_{\si 0}\not\equiv_{\a+n_\si}\B_{\si 1}$.
Finally, let $n_{i+1}$ be the maximum of $n_\si$ for all $\si\in 2^i$.
This finishes the construction of the tree. 

For each $X\in 2^\om$, we have a sequence of structures
\[
B_\emptyset \equiv_{\a+3} B_{X\upto 1} \equiv_{\a+n_1+3}  B_{X\upto 2} \equiv_{\a+n_2+3} B_{X\upto 2} \equiv_{\a+n_3+3}	\cdots 
\]
It is proved in \cite[Lemma XII.6]{Part2} that given such a sequence, there exists a structure $\B_X$ such that $\B_X\equiv_{n_i} \B_{X\upto i}$ for every $i\in\om$.
For different $X, Y\in 2^\om$, let $i$ be such that $X\upto i\neq Y\upto i$. 
Then 
\[
B_X \equiv_{\a+n_{i}} \B_{X\upto i} \not\equiv_{\a+n_i} \B_{Y\upto i} \equiv_{\a+n_{i}} \B_Y.
\]
So we get continuum many models of $T$ up to $\equiv_{\a+\om}$-equivalence, and thus $\vo(T)\leq \a+\om$.
\end{proof}

With this result, we have now established the needed background theory on the robust Scott rank needed for this paper.



\section{Operations on Linear Orders and Scott Rank}

This section will analyze the Scott ranks of various types of linear orders. A similar analysis was done in \cite{MonIntermediate}, and some of the results are directly from that paper. Other results along these lines are improvements of the results in \cite{MonIntermediate} that are needed to obtain the claimed bound.

We use the standard notation $[a,b],(a,b),L_{>a},L_{\geq a},L_{<a},L_{\leq a}$ to speak about open and closed intervals, initial segments and end segments of a linear order $L$. 

The following basic result about countable linear orders is needed many times.

\begin{lemma}\label{cofseq}
Given any countable linear order without a greatest element, $L$, there exists a cofinal, injective, order preserving map $\omega\to L$. 
\end{lemma}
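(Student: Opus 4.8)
\textbf{Proof proposal for Lemma \ref{cofseq}.}

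The plan is to build the cofinal injective order-preserving map $f\colon\omega\to L$ by a straightforward greedy recursion, using the hypothesis that $L$ has no greatest element at every stage to keep going. Fix an enumeration $L=\{a_0,a_1,a_2,\dots\}$ of the (countable) underlying set; this enumeration need not respect the order, and it is only used as a bookkeeping device to guarantee cofinality. I would define $f(n)$ by recursion on $n$ so that at each step both (i) $f$ is strictly increasing so far, and (ii) $f(n)\geq a_n$ in the ordering of $L$ --- or more precisely, $f(n)$ is chosen to dominate $a_n$ whenever that is consistent with being strictly above all previously chosen values.

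First I would set $f(0)$ to be any element of $L$ with $f(0)\geq a_0$: such an element exists because $a_0$ itself works (or, if one prefers, because $L$ has no greatest element one can even take something strictly above $a_0$). For the recursive step, suppose $f(0)<f(1)<\cdots<f(n)$ have been defined with $f(k)\geq a_k$ for each $k\leq n$. Since $L$ has no greatest element, there is some $b\in L$ with $b>f(n)$. Now consider $\max\{b,\,c\}$ where $c$ is some element with $c\geq a_{n+1}$; since $L$ is linearly ordered this maximum exists as an element of $L$, it is $>f(n)$ (because $b>f(n)$), and it is $\geq a_{n+1}$. Let $f(n+1)$ be this element. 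This maintains both invariants, so the recursion goes through for all $n\in\omega$.

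It remains to check the three required properties. The map $f$ is order-preserving and injective because the construction makes $f(0)<f(1)<f(2)<\cdots$ strictly, so $m<n$ implies $f(m)<f(n)$. For cofinality: given any $x\in L$, we have $x=a_n$ for some $n$ in our fixed enumeration, and then $f(n)\geq a_n=x$, so $f$ is cofinal in $L$. (Here it is enough to have $f(n)\geq x$ rather than $f(n)>x$ for cofinality in the sense of the statement; if strict cofinality were wanted one would instead arrange $f(n+1)>a_{n+1}$ using the no-greatest-element hypothesis once more, which is equally easy.)

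I do not expect any genuine obstacle here --- the statement is an elementary fact, and the only point requiring the hypothesis is the recursive step, where ``no greatest element'' is exactly what guarantees we can always move strictly upward while the enumeration ensures we eventually pass every element. The one place to be slightly careful is to keep the enumeration of $L$ (arbitrary, used for cofinality) conceptually separate from the order on $L$ (used for monotonicity), so that the two invariants do not get conflated.
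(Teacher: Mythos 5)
Your proposal is correct and follows essentially the same argument as the paper: fix an arbitrary enumeration, recursively choose the next value strictly above the previous one (using the no-greatest-element hypothesis) and above the next enumerated element (ensuring cofinality). No issues.
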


\begin{proof}
Consider an enumeration of the elements $\{l_i\}_{i\in\omega}$. Let $h(0)=l_0$ and $h(j+1)$ to be the first enumerated element that is larger than both $l_j$ and $h(j)$. It is clear that this map is as desired.
\end{proof}

The following few results describe the complexity of linear orders based on the complexity of the suborders that act as various types of building blocks for the final order. The first one was shown in \cite{MonIntermediate}. While it used a slightly different notion of Scott rank, this proof is sufficient for our notion as well.

\begin{lemma}\label{least}
(\cite[Lemma 4.3]{MonIntermediate}) Let $L=A+1+B$ be a sum of linear orders. We have that
$$\max(\SR(A),\SR(B))\leq \SR(L).$$
\end{lemma}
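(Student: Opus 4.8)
The plan is to prove the two inequalities $\SR(A)\le\SR(L)$ and $\SR(B)\le\SR(L)$ separately, and to reduce the second to the first by reversing the order: the reversal $L^{*}=B^{*}+1+A^{*}$ satisfies $\SR(L^{*})=\SR(L)$, $\SR(B^{*})=\SR(B)$, and $\SR(A^{*})=\SR(A)$, so applying the first inequality to $L^{*}$ yields $\SR(B)\le\SR(L)$. Thus it suffices to bound $\SR(A)$. The central point is that $A$ sits inside $L$ as the initial segment strictly below the middle point, and that this copy is automorphism‑closed once we name that point. Write $c$ for the unique element of the summand $1$, so that $A=L_{<c}$ and $B=L_{>c}$. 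Any automorphism of $L$ fixing $c$ preserves the cut at $c$, hence maps $A$ onto $A$; conversely every automorphism of $A$ extends to one of $L$ by acting as the identity on $\{c\}\cup B$. Consequently, for a tuple $\abar\in A$ the $\mathrm{Aut}(A)$‑orbit of $\abar$ coincides with the orbit of $\abar$ in $L$ computed over the parameter $c$.

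The main step I would establish is a back‑and‑forth transfer lemma: for all tuples $\abar,\bbar\in A$, and with $c$ together with any fixed parameters from $B$ matched to themselves on both sides,
\[
(A,\abar)\equiv_\gamma(A,\bbar)\iff(L,\abar)\equiv_\gamma(L,\bbar).
\]
This is a game argument. Since the two $L$‑sides share literally the same final part $1+B$, Duplicator answers any move in $\{c\}\cup B$ by the identical element, while any move strictly below $c$ is answered using a winning strategy for the $\gamma$‑game on $A$; the cut at $c$ guarantees that every cross comparison stays consistent (all of $A$ lies below $c$ and below all of $B$), so the two games factor and the $\{c\}\cup B$ part contributes nothing. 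The transfer is exactly what makes the orbit correspondence above usable at each finite or transfinite level.

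To conclude I would feed this into the standard back‑and‑forth characterization of the parameterized Scott rank (see \cite{Part2}): $\SR(\M)\le\alpha$ iff some finite tuple $\bar p$ makes $\equiv_\alpha$ over $\bar p$ coincide with lying in the same $\mathrm{Aut}(\M)$‑orbit over $\bar p$, a property preserved under enlarging $\bar p$. Assume $\SR(L)=\alpha$ with witnessing parameters $\bar p$; enlarging $\bar p$ we may take $c\in\bar p$ and split $\bar p=\bar p_A\,c\,\bar p_B$ with $\bar p_A\in A$ and $\bar p_B\in B$. I claim $\bar p_A$ witnesses $\SR(A)\le\alpha$: if $(A,\bar p_A\abar)\equiv_\alpha(A,\bar p_A\bbar)$, then the transfer lemma gives $(L,\bar p\,\abar)\equiv_\alpha(L,\bar p\,\bbar)$, so by the choice of $\bar p$ there is an automorphism of $L$ fixing $\bar p$ — in particular fixing $c$ — that carries $\abar$ to $\bbar$; its restriction to $A$ is an automorphism of $A$ fixing $\bar p_A$, placing $\abar$ and $\bbar$ in the same $A$‑orbit. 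Hence $\SR(A)\le\alpha=\SR(L)$.

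I expect the main obstacle to be the transfer lemma together with the bookkeeping around the Scott‑rank characterization: one must verify rigorously that the $\gamma$‑back‑and‑forth game on $L$ decomposes along the cut at $c$ with the shared summand $1+B$ inert, and pin down the exact ordinal indices so that naming $c$ and relativizing to the initial segment $L_{<c}$ cost nothing in complexity (this is where one checks that enlarging $\bar p$ by $c$ preserves the witnessing property). The order‑theoretic content — that an order automorphism fixing $c$ cannot mix the two sides of the cut — is precisely the feature special to linear orders that legitimizes the decomposition and the orbit correspondence.
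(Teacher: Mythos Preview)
Your approach is the right idea and matches the paper's: the paper does not prove this lemma but observes that it is a special case of the general principle that any $\Delta_0^{\mathrm{in}}$-definable (over parameters) substructure $N\subseteq M$ satisfies $\SR(N)\le\SR(M)$, and indeed $A=L_{<c}$ and $B=L_{>c}$ are such substructures once the middle point $c$ is named. Your automorphism correspondence and game-theoretic transfer lemma are exactly the ingredients behind that principle in this setting.

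There is, however, a genuine gap in your final step. The characterization you invoke --- ``$\SR(\M)\le\alpha$ iff some finite $\bar p$ makes $\equiv_\alpha$ over $\bar p$ coincide with $\mathrm{Aut}(\M)$-orbits over $\bar p$'' --- is not correct as a biconditional. The forward implication holds, and that is what you legitimately use on the $L$ side. But the backward implication fails: in $(\omega,<)$ (with $\Sigma_0=\Pi_0=$ quantifier-free), the $\Pi_1$-type of $n$ already records ``at most $k$ predecessors'' for every $k\ge n$, so $\equiv_1$ separates all points and hence coincides with orbits; yet no singleton $\{n\}$ is $\Sinf{1}$-definable (any formula $\exists\bar y\,\theta(x,\bar y)$ with $\theta$ quantifier-free is upward closed in $x$), so $\SR((\omega,<))=2>1$. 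Thus from ``$\equiv_\alpha$ over $\bar p_A$ determines $A$-orbits'' you can only conclude $\SR(A)\le\alpha+1$, not $\SR(A)\le\alpha$.

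The repair is to bypass the characterization and transfer the $\Sinf{\alpha}$ orbit-definitions themselves. Your game argument already contains the needed content: since every atomic formula relating a variable ranging over $A$ to one in $\{c\}\cup B$ has a fixed truth value, an easy induction on complexity shows that any $L$-formula evaluated at an $A$-tuple is equivalent to a Boolean combination of an $A$-formula in the $A$-variables and a sentence about $(1+B,c,\bar p_B)$; the latter is a constant. Hence the $\Sinf{\alpha}$ $L$-formula (over $\bar p$) defining the $L$-orbit of $\bar a$ yields a $\Sinf{\alpha}$ $A$-formula over $\bar p_A$ defining the same set --- which by your orbit correspondence is exactly the $A$-orbit of $\bar a$ over $\bar p_A$. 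This is the syntactic counterpart of your game, and it is precisely the kind of relativization the paper uses explicitly (the map $\varphi\mapsto\varphi^{>c}$) in the proof of the companion inequality in Lemma~\ref{leasteq}.
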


While not explicitly stated in \cite{MonIntermediate}, the same proof also applies to see that $\max\{\SR(A+1),\SR(1+B)\}\leq \SR(L)$. In other words, this result applies any time $L$ is realized as a sum where the bottom summand has a greatest element and/or the top summand has a least element, and these can be taken to be a single overlapping element if needed.

Note that this inequality is simply a case of the general fact that any $N\subseteq M$ that is $\Delta_0^{in}$ definable over parameters in $M$ has $\SR(N)\leq\SR(M)$ (again the proof in \cite{MonIntermediate} is sufficient to see this). We occasionally refer to this more general result as well. 

There is also a corresponding upper bound for this result presented in \cite{MonIntermediate}. We provide a slightly improved version of the upper bound that deals with the robust Scott rank defined above and that will better suit the purposes here. 

\begin{lemma}\label{leasteq}
Given a linear orders $A$ and $B$ we have that 
$$\SR(A+1+B)=\max(\SR(A),\SR(B)).$$
\end{lemma}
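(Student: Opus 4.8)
The plan is to prove the two inequalities separately. The inequality $\max(\SR(A),\SR(B))\leq \SR(A+1+B)$ is already established in Lemma \ref{least}, so the real content is the reverse inequality $\SR(A+1+B)\leq \max(\SR(A),\SR(B))$. Write $\a = \max(\SR(A),\SR(B))$ and $L = A+1+B$, and let $c$ denote the distinguished middle point. The strategy is to exhibit a finite tuple of parameters in $L$ over which every automorphism orbit of a tuple in $L$ is $\Sii_{\a}$-definable; by Definition \ref{def:ST} this shows $\SR(L)\leq \a$.

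First I would fix parameters: take $\mbar_A$ a tuple in $A$ witnessing $\SR(A)\leq\a$ (so every orbit in $A$ over $\mbar_A$ is $\Sii_{\a}$) and similarly $\mbar_B$ in $B$, and throw in the midpoint $c$. Over the parameter $c$, the sets $A = L_{<c}$ and $B = L_{>c}$ are $\Delta^{\itt}_0$-definable, so a structure like $A$ sitting inside $L$ has its induced $\Sii_\a$-definability transferred from the ambient structure — this is exactly the $N\subseteq M$ remark following Lemma \ref{least}. The key combinatorial point is that the automorphism group of $L$ over $c$ splits as $\mathrm{Aut}(A)\times\mathrm{Aut}(B)$: any automorphism of $L$ fixing $c$ must preserve $L_{<c}=A$ and $L_{>c}=B$ setwise and restricts to automorphisms of each, and conversely any pair of automorphisms glues. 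Hence for a tuple $\xbar = \xbar_A{}^\frown x_c{}^\frown\xbar_B$ of $L$ (after reordering into its $A$-part, possible occurrences of $c$, and $B$-part — all of which is $\Delta^{\itt}_0$-expressible over $c$), the orbit of $\xbar$ in $L$ over $\mbar_A,\mbar_B,c$ is determined by the orbit of $\xbar_A$ in $A$ over $\mbar_A$, the orbit of $\xbar_B$ in $B$ over $\mbar_B$, and the (quantifier-free, over $c$) information of which coordinates equal $c$ and how the three blocks interleave positionally. Each of these pieces is $\Sii_\a$ over the chosen parameters, and a finite conjunction of $\Sii_\a$ formulas with a $\Delta^{\itt}_0$ formula describing the interleaving pattern is still $\Sii_\a$ (using $\a \geq \om$, or handling small finite $\a$ directly — if $\a$ is finite one still gets $\Sii_\a$ since conjunctions don't raise the level). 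This yields a $\Sii_\a$ definition of the orbit of every tuple, so $\SR(L)\leq\a$.

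The step I expect to be the main obstacle is the bookkeeping that the orbit of $\xbar$ in $L$ is genuinely captured by the conjunction of the orbit formulas for its $A$- and $B$-parts together with the interleaving data — in particular verifying that if $\xbar$ and $\ybar$ have the same $A$-orbit, same $B$-orbit, and same interleaving pattern relative to $c$, then some single automorphism of $L$ (fixing $c$, $\mbar_A$, $\mbar_B$) sends $\xbar$ to $\ybar$. This is where the product decomposition of $\mathrm{Aut}(L)$ over $c$ is used, and one has to be careful that the reordering of the coordinates of a tuple into ``$A$-block, copies of $c$, $B$-block'' is itself $\Delta^{\itt}_0$-definable over $c$ so that pulling back the $\Sii_\a$ orbit formulas along this reordering stays $\Sii_\a$. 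A secondary, more minor point is making sure the argument degrades gracefully when $A$ or $B$ is empty or has its own greatest/least element shared with $c$; these are handled exactly as in the remark after Lemma \ref{least} that a single overlapping element is allowed.
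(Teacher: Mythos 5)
Your proposal is correct and follows essentially the same route as the paper: the lower bound is quoted from Lemma \ref{least}, and the upper bound is obtained by adding the midpoint $c$ (together with witnessing parameters) to the parameter tuple, noting that automorphisms fixing $c$ factor as $\mathrm{Aut}(A)\times\mathrm{Aut}(B)$, and relativizing the $\Sii_\gamma$ orbit-defining formulas to $x<c$ and $x>c$. Your extra bookkeeping for mixed tuples and the interleaving pattern is just a more explicit version of what the paper leaves implicit.
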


\begin{proof}
Let $\gamma = \max(\SR(A),\SR(B))$. Name the element representing the "1" in the given decomposition $c$ and consider the structure $(L,c)$. Note that an automorphism of this structure must preserve the elements above $c$ and those below $c$. In other words, it must factor into an automorphism of $A$ and $B$. Thus, for $b\in B$, its automorphism orbit in $L$ is exactly its automorphism orbit in $B$. In particular, if $\varphi\in\Sigma_\gamma^{in}$ describes the automorphism orbit of $b\in B$, we have that $\varphi^{>c}$ describes the automorphism orbit of $b\in L$, where $\varphi^{>c}$ is the same as $\varphi$ save for the fact that all quantifiers are restricted to $x>c$. Note that $\varphi^{>c}\in\Sigma_\gamma^{in}(c)$. Similar analysis holds for $a\in A$. Overall, this gives that $\SR(L)\leq\gamma.$

The other inequality follows from the result of Montalb\'an explained above.
\end{proof}

This version of the lemma allows a result that bounds the complexity any finite sum of linear orders with minimal elements. 

\begin{corollary}\label{finsum}
If $L$ can be written as a finite sum of the form
$$L=\sum_{i\in n} 1+A_i$$
where $\SR(A_i)\leq\gamma$, we have that $\SR(L)\leq\gamma$.
\end{corollary}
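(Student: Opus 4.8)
The plan is to proceed by induction on $n$. The base case $n=0$ (empty sum, $L$ empty) or $n=1$ (where $L = 1+A_0$) is immediate: $\SR(1+A_0) \leq \max(\SR(A_0),\SR(\emptyset)) \leq \gamma$ by Lemma \ref{leasteq} applied with $A = \emptyset$ and $B = A_0$ (reading the degenerate decomposition $\emptyset + 1 + A_0$, using the remark after Lemma \ref{least} that overlapping/degenerate summands are allowed). For the inductive step, suppose the bound holds for sums of fewer than $n$ pieces. Write $L = (1+A_0) + \bigl(\sum_{1\leq i < n} 1+A_i\bigr)$. Let $B = \sum_{1\leq i<n} 1+A_i$; by the induction hypothesis $\SR(B) \leq \gamma$.

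The key move is to re-bracket $L$ so that Lemma \ref{leasteq} applies. Since the second summand $B$ begins with a distinguished first element $c$ (the "$1$" in front of $A_1$), we can write $L = (1 + A_0) + 1 + A_1 + \bigl(\sum_{2 \leq i < n} 1+A_i\bigr)$ — that is, $L = A' + 1 + B'$ where $A' = 1+A_0$, the middle $1$ is the leading element of the original second block, and $B' = A_1 + \sum_{2\leq i<n} 1+A_i$. By Lemma \ref{leasteq}, $\SR(L) = \max(\SR(A'), \SR(B'))$. Now $A' = 1+A_0$ has $\SR \leq \gamma$ by the base case, and $B'$ is again a finite sum of the allowed form (its initial piece $A_1$ is handled by the "top summand has a least element" variant of Lemma \ref{least}, or one simply notes $1+B' = \sum_{1\leq i<n}1+A_i$ has $\SR\leq\gamma$ by induction and hence $\SR(B')\leq\gamma$ by Lemma \ref{least}), so $\SR(B')\leq\gamma$. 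Thus $\SR(L)\leq\gamma$.

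Alternatively, and perhaps more cleanly, one can avoid the re-bracketing by invoking the general principle recorded after Lemma \ref{least}: each block $1+A_i$ sits inside $L$ as a $\Delta^{in}_0$-definable-over-parameters suborder (it is an interval cut out by naming the finitely many distinguished "$1$" elements), so the hard direction is really the \emph{upper} bound, which is what the induction delivers. Either way, the only thing to check carefully is that the degenerate cases of Lemma \ref{leasteq} — where one of $A$, $B$ is empty, or where the "$1$" is being shared as an overlapping endpoint — are legitimate; the remark immediately following Lemma \ref{least} states exactly this, so there is no real obstacle. I expect the main (minor) subtlety to be purely bookkeeping: making sure the associativity rearrangement of the finite sum lines up the distinguished elements correctly so that each application of Lemma \ref{leasteq} is to a genuine three-part decomposition $A+1+B$.
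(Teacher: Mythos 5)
Your proposal is correct and follows essentially the same route as the paper, which simply says ``by $n$ repeated applications of Lemma \ref{leasteq}''; your induction just makes the regrouping explicit and handles the degenerate endpoints (empty bottom summand, the block $B'$ lacking a leading $1$) via the remark after Lemma \ref{least}, which is exactly the intended reading. No gap.
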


\begin{proof}
Note that $\SR(A_i)\leq\gamma$. By $n$ repeated applications of Lemma \ref{leasteq}, we immediately see that $\SR(L)\leq\gamma$. 
\end{proof}

We also provide an improved result on the Scott rank of sums of linear orders that will ultimately help improve the bounds provided in this paper. The result is most clear if we start with a lemma that concerns automorphism orbits within initial segments of the order. To state this lemma we need the following definition. 

\begin{definition}
Given a linear order $K$ and points $z<b$ in $K$, let $\varphi_{z,b}$ denote a definition of the automorphism orbit of $z$ within $K_{<b}$.
\end{definition}

We are now ready to prove the following lemma, which provides the key technical insight needed to bound the complexity of sums of linear orders.

\begin{lemma}\label{fixaut}
Given a linear order $K$ and a point $z\in K$, if there is a point $b>z$ such that every $y>b$ with $K_{<b}\cong K_{<y}$ has that $\varphi_{z,b}$ also defines the automorphism orbit of $z$ in $K_{<y}$, then the automorphism orbit of $z$ within $K$ can be described by a $\Sigma_{\SR(K_{<b})+4}^{in}$ formula.
\end{lemma}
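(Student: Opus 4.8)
The goal is to produce a $\Sigma_{\SR(K_{<b})+4}^{\mathrm{in}}$ definition of the automorphism orbit of $z$ in $K$, given the stabilization hypothesis at $b$. The natural idea is: being in the orbit of $z$ should be equivalent to saying ``there is a point $b'$ above me that looks exactly like $b$ (i.e. $K_{<b'}\cong K_{<b}$) and, relative to that $b'$, I satisfy $\varphi_{z,b}$ inside the initial segment below $b'$.'' So I would write, for a variable $w$,
\[
\theta(w) \;\equiv\; \exists b'\ \bigl(\, w < b' \;\wedge\; K_{<b'}\cong K_{<b} \;\wedge\; \varphi_{z,b}^{<b'}(w) \,\bigr),
\]
where $\varphi_{z,b}^{<b'}$ denotes $\varphi_{z,b}$ with all quantifiers relativized to the initial segment below $b'$, exactly as in the proof of Lemma \ref{leasteq}. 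One then checks that $\theta$ defines the orbit of $z$: if $w$ is an automorphic image of $z$, take $b'$ to be the image of $b$ under the automorphism, and the three conjuncts hold by transport of structure; conversely, if $\theta(w)$ holds via some witness $b'$, I need to know that $b'$ can be taken with $K_{<b'}\cong K_{<b}$ via an isomorphism \emph{that moreover matches the orbit}, and then patch this isomorphism of initial segments with the identity on the (isomorphic, since $K_{<b}\cong K_{<b'}$) end segments to get an automorphism of $K$ sending $z$ to $w$. The case $b' \le b$ versus $b' > b$ needs a word: if $b'\le b$ the witness pins $w$ inside $K_{<b}$ (or $K_{<b'}\subseteq K_{<b}$) and one argues directly; the interesting case is $b'>b$, which is exactly where the hypothesis ``$K_{<b}\cong K_{<y}$ implies $\varphi_{z,b}$ still defines the orbit of $z$ in $K_{<y}$'' gets used to ensure that satisfying $\varphi_{z,b}$ below $b'$ really does mean being in the orbit of $z$ there.

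\textbf{Complexity bookkeeping.} Let $\delta = \SR(K_{<b})$. Then $\varphi_{z,b}$ can be taken $\Sigma_\delta^{\mathrm{in}}$ (possibly with parameters $<b$, which is fine since $z$ itself is a parameter and one may absorb a finite tuple). The relativization $\varphi_{z,b}^{<b'}$ stays $\Sigma_\delta^{\mathrm{in}}$ over the parameter $b'$. The clause $K_{<b'}\cong K_{<b}$ is, by the Scott-sentence theorem quoted above, expressible by a $\Sigma_{\delta+2}^{\mathrm{in}}$ formula relativized to the initial segment below $b'$ — this is a $\Sigma_{\delta+2}^{\mathrm{in}}$ condition on $b'$. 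Conjoining with $w<b'$ (atomic) gives $\Sigma_{\delta+2}^{\mathrm{in}}$, and then the outer $\exists b'$ keeps it $\Sigma_{\delta+2}^{\mathrm{in}}$. Wait — I should be careful: an existential quantifier over a $\Sigma_{\delta+2}^{\mathrm{in}}$ matrix is $\Sigma_{\delta+2}^{\mathrm{in}}$, so this already gives $\Sigma_{\delta+2}^{\mathrm{in}}$, which is better than claimed. The slack in the statement ($\delta+4$ rather than $\delta+2$) presumably accounts for needing $K_{<b'}\cong K_{<b}$ to be checked ``from inside'' via $\rho$-type sentences or for a parameter-management step, or simply for a safe uniform bound; in any case $\Sigma_{\delta+4}^{\mathrm{in}}$ is a comfortable upper bound and I would not fight for the sharper one here.

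\textbf{Main obstacle.} The delicate point is the \emph{converse} direction of the equivalence, specifically the patching argument: from a witness $b'>b$ with $K_{<b'}\cong K_{<b}$ and $\varphi_{z,b}^{<b'}(w)$, I must manufacture an actual automorphism of all of $K$ carrying $z$ to $w$. The hypothesis gives that $\varphi_{z,b}$ defines the orbit of $z$ \emph{in $K_{<b'}$} (using $K_{<b}\cong K_{<b'}$), so there is an automorphism $g$ of $K_{<b'}$ with $g(z)=w$ — but an automorphism of the initial segment need not extend to $K$. To extend it I want to glue $g$ with the identity on $K_{\ge b'}$; this is legitimate precisely because $K_{<b'}$ is an initial segment, so any automorphism of it together with the identity on the complementary end segment is an automorphism of $K$ (order is preserved across the cut since everything below $b'$ stays below $b'$). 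That is in fact clean. The genuine subtlety I should watch is ensuring the hypothesis is being applied to the right $y$: the hypothesis quantifies over $y>b$ with $K_{<b}\cong K_{<y}$, and I am applying it with $y=b'$, so I must confirm the witness clause really delivers $K_{<b}\cong K_{<b'}$ and $b'>b$ — which it does not automatically, since $b'$ could be $\le b$. So the formula should perhaps build in the disjunction over the two regimes, or (cleaner) note that if $b'\le b$ then $\varphi_{z,b}^{<b'}(w)$ together with $K_{<b'}\cong K_{<b}$ forces $b'=b$ essentially, reducing to $\varphi_{z,b}(w)$ outright. Handling this small case analysis carefully, and making sure the isomorphism-of-initial-segments clause is stated so that it is genuinely a formula about $K$ evaluated below $b'$ (rather than an external statement), is where I expect to spend the real effort.
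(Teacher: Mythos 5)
Your overall strategy---an existential witness $b'$ with $K_{<b'}\cong K_{<b}$, evaluation of $\varphi_{z,b}$ relativized below $b'$, and patching an automorphism of the initial segment with the identity on the tail---is the right skeleton, and the patching step itself is fine. But your formula $\theta$ is wrong as written, and the spot you flagged as needing ``a word'' is exactly where it breaks: the claim that $b'\le b$ together with $K_{<b'}\cong K_{<b}$ ``forces $b'=b$ essentially'' is false, since proper initial segments are routinely isomorphic to $K_{<b}$. Concretely, take $K=\omega^*$ with elements $\dots<a_3<a_2<a_1$, let $z=a_3$ and $b=a_1$. The hypothesis of the lemma holds vacuously (there is no $y>b$), and $\varphi_{z,b}$ may be taken to say ``exactly one element lies above $x$,'' which defines the orbit $\{a_3\}$ of $z$ inside $K_{<b}\cong\omega^*$. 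Then $\theta(a_n)$ holds for every $n\ge 3$, witnessed by $b'=a_{n-2}$ (indeed $K_{<a_{n-2}}\cong\omega^*\cong K_{<b}$ and exactly one point lies between $a_n$ and $a_{n-2}$); yet $\omega^*$ is rigid, so the true orbit of $z$ in $K$ is just $\{a_3\}$. The failure is precisely the low witness: an isomorphism $K_{<b'}\to K_{<b}$ with $b'<b$ can carry $w$ onto the orbit of $z$ without extending to any automorphism of $K$, and the lemma's hypothesis says nothing about such $b'$.

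The paper's formula differs from yours in exactly the needed way: it has an additional universal clause, $\Phi_z(w):\ \exists v>w\,\bigl(K_{<v}\cong K_{<b}\ \land\ \forall y>v\,(K_{<y}\cong K_{<b}\to K_{<y}\models\varphi_{z,b}(w))\bigr)$. That clause is what defeats spurious low witnesses: if the witness $v$ lies below $b$, instantiating the universal at $y=b$ forces $K_{<b}\models\varphi_{z,b}(w)$, i.e.\ $w$ really is in the orbit of $z$ inside $K_{<b}$, and only then does your identity-patching apply; if relevant $y$ lie above $b$, the lemma's hypothesis takes over; and soundness still holds because $v=b$ is a witness for $z$ itself. (One should read the universal so that a suitable $y$ is actually available to instantiate---e.g.\ include $y=v$ or treat the topmost occurrence separately---but that is a write-up wrinkle, not a change of idea.) This also resolves the complexity puzzle you raised: your $\Sigma^{in}_{\SR(K_{<b})+2}$ count for the purely existential formula is correct, but the ``$+4$'' is not slack. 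The clause $K_{<y}\cong K_{<b}$ is a relativized $\Sigma^{in}_{\SR(K_{<b})+2}$ Scott sentence, so the implication under $\forall y$ is $\Pi^{in}_{\SR(K_{<b})+3}$, and the outer $\exists v$ gives exactly $\Sigma^{in}_{\SR(K_{<b})+4}$: the two extra levels are the signature of the missing quantifier. Finally, be warier than ``absorb a finite tuple'': in the intended application one fixed finite parameter tuple must work for all $z$ simultaneously, so you cannot let each orbit's definition carry its own parameters $z,b$; the formula should refer to $K_{<b}$ only through its Scott sentence, as above.
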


\begin{proof}
Consider the following formula:
$$\Phi_z(w):\hspace{.5 in} \exists v>w ~ \Big(\big(K_{<v}\cong K_{<b}\big) \land \forall y>v ~ \big(K_{<y}\cong K_{<b} \to K_{<y}\models \varphi_{z,b}(w)\big)\Big).$$

We show that this formula characterizes the automorphism orbit of $z$. 
From this, the lemma will immediately follow as it is of the desired complexity using the fact that $K_{<y}\cong K_{<b}$ is a $\Sigma_{\SR(K_{<b})+2}^{in}$ formula.

Firstly, observe that $\Phi_z(z)$ as $b$ is a witness for this statement for $z$. In particular, note that by definition of $b$, for all $y$ with $K_{<y}\cong K_{<b}$ we have that $\varphi_{z,b}$ describes the automorphism orbit of $b$ in $K_{<y}$. This gives that $\varphi_{z,b}(z)$. Therefore, any element in the automorphism orbit of $z$ also satisfies $\Phi_z$.

Lastly, note that if $\Phi_z(w)$ there is a $y>\max(w,z)$ such that $K_{<y}\models \varphi_{z,b}(w)$. Thus, there is an automorphism of $K_{<y}$ taking $w$ to $z$. This can be extended to an automorphism of $K$ by fixing $K_{\geq y}$. Therefore, $w$ is in the automorphism orbit of $z$ as desired.
\end{proof}

With this in mind, we are nearly ready to bound the Scott rank of the sum of linear orders. That being said, the proof of this lemma will also make use of the following Lemma of Lindenbaum. 

\begin{definition}
Given two linear orders $L,K$ we say that $L\sqsubseteq K$ if $L$ is an initial segment of $K$. 
We say $L\sqsupseteq K$ if $K$ is a final segment of $L$.
\end{definition}

\begin{lemma}\label{lin}
(Lindenbaum, \cite{Ros82}) Given two linear orders $L,K$ if $L\sqsubseteq K$ and $L\sqsupseteq K$ then $L\cong K$.
\end{lemma}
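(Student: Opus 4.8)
The statement to prove is: if $L \sqsubseteq K$ and $L \sqsupseteq K$ then $L \cong K$. In other words, if $K$ embeds as an initial segment of $L$ *and* $L$ embeds as an initial segment of $K$ — wait, let me re-read. Actually $L \sqsubseteq K$ means $L$ is an initial segment of $K$, and $L \sqsupseteq K$ means $K$ is a final segment of $L$. So $L$ sits inside $K$ as an initial piece, and $K$ sits inside $L$ as a final piece. This is the classical Lindenbaum (Cantor–Bernstein-type) result for linear orders; it's standard and cited to Rosenstein's book, so presumably the paper gives a short proof.
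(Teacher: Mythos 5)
The paper gives no proof of this lemma at all: it is imported verbatim from Rosenstein's book, with the citation standing in for the argument. Your proposal --- correctly parsing $L\sqsubseteq K$ and $L\sqsupseteq K$ according to the paper's definition and deferring to the classical Lindenbaum (Cantor--Bernstein-type) theorem for linear orders --- therefore matches the paper's treatment exactly, though be aware that you have not actually supplied an argument (and neither does the paper; one would have to consult Rosenstein or write out the standard iteration/back-and-forth proof if a self-contained argument were required).
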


With this in place, we can now prove the lemma we were aiming at. It is a tricky combinatorial argument that breaks the behavior of linear orders into several cases and subcases. Ultimately we will find suitable witnesses to Lemma \ref{fixaut} that enable the simple description of automorphism orbits within a sum.

\begin{lemma}\label{sum}
Given linear orders $A$ and $B$, we have that $\SR(A+B)\leq\max(\SR(A),\SR(B))+4$.
\end{lemma}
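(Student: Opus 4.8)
The plan is to reduce the problem to Lemma \ref{fixaut} by finding, for each point $z$ in $L = A+B$, a suitable witness $b>z$ whose initial segment $L_{<b}$ has Scott rank bounded by $\max(\SR(A),\SR(B))$ (plus possibly a small finite amount), and such that the automorphism orbit of $z$ stabilizes among all longer initial segments isomorphic to $L_{<b}$. Since the orbit of $z$ in $L$ will then be $\Sigma^{in}_{\SR(L_{<b})+4}$, and we want the final bound $\max(\SR(A),\SR(B))+4$, the witnesses $b$ should be chosen so that $\SR(L_{<b}) \leq \max(\SR(A),\SR(B))$ whenever possible, using Corollary \ref{finsum} and Lemma \ref{leasteq} to control initial segments that decompose as finite sums $1+C$ with $\SR(C)$ small.

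The case analysis I would carry out is on where $z$ sits relative to the cut between $A$ and $B$, and on the order-theoretic structure near $z$:
\begin{itemize}
\item If $z \in A$: then $L_{<b}$ is an initial segment of $A$ for any $b \in A$ with $b>z$. If $A$ has no greatest element, use Lemma \ref{cofseq} to get a cofinal $\omega$-sequence and pick $b$ in it large enough; $A_{<b}$ is a proper initial segment of $A$, hence $\Delta_0^{in}$-definable over $b$ in $A$, so $\SR(A_{<b}) \leq \SR(A)$ by the remark after Lemma \ref{least}. I need to argue the orbit of $z$ stabilizes; here Lindenbaum's Lemma \ref{lin} is the tool — if two initial segments $A_{<b} \cong A_{<y}$ then matching them up forces the orbit of $z$ to be the same, possibly after choosing $b$ so that $z$ is "pinned down" inside $A_{<b}$.
\item If $z \in B$ (or $z$ is at/near the cut): the dangerous initial segments are of the form $A + (\text{initial segment of } B)$. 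Here I would split further according to whether $A$ has a greatest element, whether the relevant initial segment of $B$ has a least/greatest element, and whether $B_{<b}$ (the part of $B$ below $z$) is "small" — ideally writing $L_{<b}$ as a finite sum to invoke Corollary \ref{finsum}, or directly bounding via Lemma \ref{leasteq} when $L_{<b} = A + 1 + (\text{something})$.
\end{itemize}
In each subcase the goal is the same: exhibit $b$ so that (i) $\SR(L_{<b}) \leq \max(\SR(A),\SR(B))$ and (ii) the hypothesis of Lemma \ref{fixaut} holds, i.e. for every $y>b$ with $L_{<y} \cong L_{<b}$, the formula $\varphi_{z,b}$ still defines the orbit of $z$ in $L_{<y}$. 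Condition (ii) is where Lindenbaum's lemma and careful choice of $b$ (large enough that the "tail" $L_{<y} \setminus L_{<b}$ cannot be absorbed back below $z$) do the work.

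The main obstacle I anticipate is condition (ii): controlling the automorphism orbit of $z$ across all initial segments isomorphic to $L_{<b}$. A priori, $L_{<b} \cong L_{<y}$ only gives \emph{some} isomorphism, not one fixing $z$ or respecting its position, so the orbit of $z$ could in principle shift. The fix is to choose $b$ so that $L_{<b}$ already "sees" enough of the structure around $z$ — for instance so that $z$ is characterized inside $L_{<b}$ by an order-theoretic property (being the greatest element, or the $n$-th element above some definable point, or lying in a definable convex piece) that any isomorphism must preserve; combined with Lindenbaum's lemma to rule out pathological re-embeddings, this should force the orbit to stabilize. Getting a uniform such choice of $b$ across all the subcases, while keeping $\SR(L_{<b})$ under the target bound, is the delicate combinatorial heart of the argument; the $+4$ slack (coming from Lemma \ref{fixaut}, which itself spends $+2$ on the $L_{<y}\cong L_{<b}$ formula and $+2$ more on the quantifier structure of $\Phi_z$) is exactly what gives enough room to absorb the finitely many extra points one may need to name.
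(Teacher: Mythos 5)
Your outline correctly identifies the reduction to Lemma \ref{fixaut} together with Lemma \ref{leasteq} and Corollary \ref{finsum}, and correctly anticipates that the whole difficulty is verifying the hypothesis of Lemma \ref{fixaut}; but the two ideas that actually make that verification possible are missing, and the substitute you sketch would not work. First, for $z\in A$ and $b\in A$ the hypothesis of Lemma \ref{fixaut} quantifies over \emph{all} $y>b$ in $A+B$ with $(A+B)_{<y}\cong A_{<b}$, and such $y$ may lie in $B$, where you have no control. The paper's proof opens with exactly this dichotomy: if some $A_{<a}$ is isomorphic to some $(A+B)_{<b}$ with $b\in B$, then $A+B=(A+B)_{<b}+1+(A+B)_{>b}$ with both summands of Scott rank at most $\gamma=\max(\SR(A),\SR(B))$, and Lemma \ref{leasteq} gives $\SR(A+B)\le\gamma$ outright; otherwise every isomorphic copy of an initial segment of $A$ stays inside $A$, which is what licenses the rest of the analysis. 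Your $z\in A$ bullet silently assumes the second alternative without securing it.

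Second, your mechanism for stabilizing the orbit of $z$ --- choosing $b$ so that $z$ is characterized inside $L_{<b}$ by a simple order-theoretic property (greatest element, $n$-th successor of a definable point, membership in a definable convex piece) --- is not available in a general countable linear order (think of $z$ inside a dense block or a block of copies of the integers), and ``picking $b$ large on a cofinal sequence'' does nothing by itself. The paper never pins $z$ down definably. Instead it runs a second dichotomy on how isomorphism types of initial segments recur: if cofinally many $x$ have $\{y\mid A_{<y}\cong A_{<x}\}$ bounded (or the analogous statement for intervals $[a,y)$), then the cut determined by such a set is isomorphism-invariant and contained in $A_{<b}$, so any isomorphism $A_{<b}\cong A_{<v}$ respects it and the orbit definition transfers; in the remaining case one chooses $a_0>z$ whose initial-segment type recurs unboundedly and then $b>a_0$ with $A_{<b}\cong A_{<a_0}$ and with $[a_0,b)$ of unboundedly recurring type, so that for any admissible $v$ Lindenbaum's Lemma \ref{lin} forces $[a_0,b)\cong[a_0,v)$ and hence yields an isomorphism $A_{<b}\cong A_{<v}$ that is the identity on $A_{<a_0}$, in particular fixing $z$. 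This choice of the pair $(a_0,b)$, exploiting unbounded recurrence plus Lindenbaum, is the combinatorial heart you yourself flagged as missing; without it, and without the initial case split, the proposal does not yield a proof.
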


\begin{proof}
Let $\gamma=\max(\SR(A),\SR(B))$.

Case 1: There is an $a\in A$ and $b\in B$ such that $A_{<a}\cong(A+B)_{<b}$.

In this case, 
$$\SR((A+B)_{<b})=\SR(A_{<a})\leq\SR(A)\leq\gamma,$$ and 
$$\SR((A+B)_{>b})=\SR(B_{>b})\leq\SR(B)\leq\gamma.$$ As $A+B=(A+B)_{<b}+1+(A+B)_{>b},$ this yields $\SR(A+B)\leq\gamma\leq\gamma+4$ as desired.

Case 2: Otherwise.

We claim that in this case, for some $a\in A$ and $b\in B$ we have that every point $z\in (a,b)$ has an automorphism orbit described by a formula in $\Sigma^{in}_{\gamma+4}$ in $(a,b)$. As $\SR(A_{<a})\leq \gamma$ and $\SR(B_{>b})\leq \gamma$ the result will then follow from Corollary \ref{finsum}. To be more specific, we will demonstrate this claim by appealing to Lemma \ref{fixaut}. Note that if we prove the claim for points $z\in A$, by symmetry (looking at $(A+B)^*$) we obtain the result for points in $B$ as well. For this reason we focus on points in $A$ in the following argument.

Subcase 1: There are cofinally many $x\in A$ such that the set $\{y|A_{<y}\cong A_{<x}\}$ is bounded in $A$.

Given a $z\in A$ we will find a $b\in A$ that satisfies the hypothesis of Lemma \ref{fixaut}. By the case we are in, we can take an $x>z$ with $x\in A$ such that the set $\{y|A_{<y}\cong A_{<x}\}$ is bounded by some $b\in A$. Consider a $v>b$ with $A_{<b}\cong(A+B)_{<v}$. Because we are not in case 1, $v\in A$, so we can say $A_{<b}\cong A_{<v}$. It is apparent that the left part of the cut in $A_{<v}$ defined by the points in $\{y|A_{<y}\cong A_{<x}\}$ is automorphism invariant. Therefore, the isomorphism between $A_{<b}$ and $A_{<v}$ fixes this cut.  This means that the witnessing isomorphism of $A_{<b}\cong A_{<v}$ fixes $z$, as $z<x\in\{y|A_{<y}\cong A_{<x}\}$. This implies that the definition of the automorphism orbit of $z$ must be the same in the two structures, so $b$ satisfies the hypothesis of Lemma \ref{fixaut}. As $\SR(A_{<b})\leq\SR(A)\leq\gamma$, this gives that the automorphism orbit of $z$ is definable by a $\Sigma^{in}_{\gamma+4}$ formula, as desired.

Subcase 2: There is an $a\in A$ and there are cofinally many $x\in A_{\geq a}$ such that the set $\{y|[a,y)\cong[a,x)\}$ is bounded in $A_{\geq a}$.

Apply the argument in subcase 1 to $(A+B)_{\geq a}$.

Subcase 3: Otherwise.

Given a $z\in A$ we will find a $b\in A$ that satisfies the hypothesis of Lemma \ref{fixaut}. Let $a_0>z$ be a point such that $\{y|A_{<y}\cong A_{<a_0}\}$ is unbounded. We can find such an $a_0$ as we are not in subcase 1. Next, take $b>a_0$ such that $\{y|[a_0,y)\cong[a_0,b)\}$ is unbounded, and $A_{<b}\cong A_{<a_0}$. It is possible to find such a $b$ because we are not in subcase 2 so the set of points $c$ with $\{y|[a_0,y)\cong[a_0,c)\}$ unbounded is final in $A$, and the set of $d$ with $A_{<d}\cong A_{<a_0}$ is unbounded in $A$. We can then take $b$ in the necessarily non-empty intersection of a final and unbounded set.

We show that this $b$ has the desired properties within $A+B$. To see this, we must consider $v>b$ such that $(A+B)_{<v}\cong A_{<b}$. Because we are in case 2, we can assume that $v\in A$ and therefore $(A+B)_{<v}\cong A_{<v}$. Define $C=[a_0,b)$ and $D=[a_0,v)$. Observe that $C\sqsubseteq D$ by construction. Furthermore, as $\{y|[a_0,y)\cong [a_0,b)\}$ is unbounded in $A$, we have that there is a $y>v$ such that $D\sqsubseteq [a_0,y) \cong C$. In other words, $C$ and $D$ are both initial in each other.

\[
\xymatrix@R=0.1pc{
\ar[rrrrrr] & |^z & {}^{a_0}| & |^{b} & |^v &   & A \\
\ar@{-)}_{A_{<a_0}}[rr]  & &   \ar@{-)}^{C}[r] & & & & \\
& &   \ar@{-)}_{D}[rr] & & & &
} \]

Name the isomorphism witnessing $A_{<b}\cong A_{<a_0}$, $\sigma$. Note that $A_{<v}\cong A_{<a_0}$ and name the witnessing isomorphism $\tau$. Say that $\sigma(a_0)\leq\tau(a_0)$. This gives that $C=[\sigma(a_0),a_0)\sqsupseteq [\tau(a_0),a_0)=D$. Now, Lemma \ref{lin} gives that $C=D$. If $\tau(a_0)\leq\sigma(a_0)$ the same argument gives $D\sqsupseteq C$ and we can also conclude $C=D$. This isomorphism along with fixing $A_{<a_0}$ provides an isomorphism between $A_{<b}$ and $A_{<v}$ that fixes $z$. This provides that the definition of the automorphism orbit of $z$ must be the same in the two structures, showing that $b$ satisfies the hypothesis of Lemma \ref{fixaut}. As $\SR(A_{<b})\leq\SR(A)\leq\gamma$, this gives that the automorphism orbit of $z$ is definable by a $\Sigma^{in}_{\gamma+4}$ formula, as desired.
\end{proof}

The following lemma is directly from \cite{MonIntermediate} and is proven in a similar way to the above result.

\begin{lemma}\label{seg}
(\cite[Lemma 4.7]{MonIntermediate}) If $L$ is a linear order such that $\forall x\in L ~ \SR(L_{\leq x})\leq\alpha$ then $ \SR(L)\leq\alpha+4$.
\end{lemma}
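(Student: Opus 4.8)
The plan is to follow the template of the proof of Lemma~\ref{sum}, but in a simplified form, since here the hypothesis already controls the Scott rank of initial segments. The key elementary observation is that for every $b\in L$ the initial segment $L_{<b}$ is a $\Delta_0^{in}$-definable substructure of $L_{\le b}$ (cut out by the quantifier-free formula $x<b$ over the parameter $b$), and likewise each interval $[a,b)$ or $(a,b)$ with $a<b$ is $\Delta_0^{in}$-definable in $L_{\le b}$; so by the hypothesis and the general fact recalled after Lemma~\ref{least}, all of these have Scott rank $\le\alpha$. Consequently, whenever we apply Lemma~\ref{fixaut} to $K=L$ (or, in the second subcase below, to $K=L_{\ge a}$) with some witness $b$, we will have $\SR(K_{<b})\le\alpha$, and the lemma will output an orbit definition of complexity $\Sii_{\alpha+4}$. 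If $L$ has a greatest element $m$ then $L=L_{\le m}$ and $\SR(L)\le\alpha$ is immediate, so from now on assume $L$ has no greatest element; then every $z\in L$ has points above it available to serve as a witness $b$.

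Fix $z\in L$ and look for $b>z$ satisfying the hypothesis of Lemma~\ref{fixaut} for $K=L$, splitting into the same three subcases as in the proof of Lemma~\ref{sum}, with the roles of $A$ and $A+B$ there both played by $L$. \emph{Subcase 1:} cofinally many $x\in L$ have $\{y:L_{<y}\cong L_{<x}\}$ bounded. Take such an $x>z$ and let $\hat S=\{w:L_{<w}$ is isomorphic to an initial segment of $L_{<x}\}$. By Lindenbaum's Lemma~\ref{lin}, any $w>x$ in $\hat S$ has $L_{<w}\cong L_{<x}$, so $\hat S=L_{\le x}\cup(\{w:L_{<w}\cong L_{<x}\}\cap L_{>x})$ is a bounded initial segment of $L$ containing $z$; pick $b\in L$ with $\hat S\subseteq L_{<b}$. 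For any $y>b$ with $L_{<y}\cong L_{<b}$, the set $\hat S$ is exactly the automorphism-invariant initial segment cut out inside each of $L_{<b}$ and $L_{<y}$ by the condition ``$(\cdot)_{<w}$ embeds as an initial segment into $L_{<x}$'' (using Lemma~\ref{lin} again to see nothing is added above $b$), so an isomorphism $L_{<b}\to L_{<y}$ splits as $h_{\hat S}\oplus h'$ along the common decomposition $\hat S+(\text{rest})$; replacing $h_{\hat S}$ by $\operatorname{id}_{\hat S}$ yields an isomorphism fixing $z$, so $\varphi_{z,b}$ defines the orbit of $z$ in $L_{<y}$ and $b$ is the required witness. \emph{Subcase 2:} not Subcase 1, but some $a\in L$ has cofinally many $x\in L_{\ge a}$ with $\{y:[a,y)\cong[a,x)\}$ bounded in $L_{\ge a}$. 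Then $L_{\ge a}$, whose initial segments are the $[a,y)$, satisfies the hypothesis of this lemma, has no greatest element, and is itself in Subcase 1; running the Subcase-1 argument inside $L_{\ge a}$ gives $\Sii_{\alpha+4}$ definitions of the orbits of points $\ge a$ in $L_{\ge a}$, hence, over the parameter $a$, in $L$; and orbits over $a$ of points $<a$ are orbits inside $L_{<a}$, which has Scott rank $\le\alpha$. \emph{Subcase 3:} otherwise. Choose $a_0>z$ with $\{y:L_{<y}\cong L_{<a_0}\}$ unbounded (possible, as we are not in Subcase 1); since we are not in Subcase 2, the set of $c\ge a_0$ with $\{y:[a_0,y)\cong[a_0,c)\}$ unbounded has bounded complement in $L_{\ge a_0}$, so it meets the unbounded set $\{d:L_{<d}\cong L_{<a_0}\}$ above $a_0$; pick $b$ in that intersection. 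For $y>b$ with $L_{<y}\cong L_{<b}$, put $C=[a_0,b)$ and $D=[a_0,y)$: then $C\sqsubseteq D$ by construction; $D$ is isomorphic to an initial segment of $C$ because the isomorphism type of $C$ recurs as $[a_0,y')$ for arbitrarily large $y'$; and comparing the images of $a_0$ under isomorphisms $L_{<b}\to L_{<a_0}$ and $L_{<y}\to L_{<a_0}$ shows that one of $C,D$ is isomorphic to a final segment of the other. Lemma~\ref{lin} then gives $C\cong D$, so $\operatorname{id}_{L_{<a_0}}$ together with an isomorphism $C\to D$ is an isomorphism $L_{<b}\to L_{<y}$ fixing $z$, and $b$ is the required witness.

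In Subcases 1 and 3, Lemma~\ref{fixaut} (applied to $K=L$ with $\SR(K_{<b})\le\alpha$) shows that each orbit of $L$ is $\Sii_{\alpha+4}$-definable, in fact without parameters; in Subcase 2 all orbits of $L$ are $\Sii_{\alpha+4}$-definable over the single fixed finite tuple consisting of $a$ together with the parameters witnessing $\SR(L_{<a})\le\alpha$. In every case $\SR(L)\le\alpha+4$. I expect the main obstacle to be the combinatorial case analysis, and within it Subcase 3: producing one $b$ that simultaneously controls $L_{<b}$ and $[a_0,b)$ and then running the two-sided Lindenbaum argument on the pair $C,D$ is the delicate step. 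The Scott-rank bookkeeping, by contrast, stays routine, since every relevant initial segment and interval sits $\Delta_0^{in}$-definably inside some $L_{\le x}$ and therefore has Scott rank $\le\alpha$.
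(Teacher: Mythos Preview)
Your overall strategy is exactly the one the paper intends (it cites \cite{MonIntermediate} and says the proof is ``in a similar way'' to Lemma~\ref{sum}), and your Subcase~3 is carried out correctly: there you really do have both $C\sqsubseteq D$ and one of $C\sqsupseteq D$ or $D\sqsupseteq C$, so Lemma~\ref{lin} applies.

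The gap is in Subcase~1 (and hence in Subcase~2, which you reduce to it). Your set $\hat S$ is not the set used in the paper's argument, and your justification that it is bounded misapplies Lindenbaum. You argue: for $w>x$ in $\hat S$ we have $L_{<w}\sqsubseteq L_{<x}$ and $L_{<x}\sqsubseteq L_{<w}$, hence $L_{<w}\cong L_{<x}$ by Lemma~\ref{lin}. But Lemma~\ref{lin} is the ``initial--final'' statement ($L\sqsubseteq K$ and $L\sqsupseteq K$), not the ``initial--initial'' statement, and the latter is false: $\eta$ and $\eta+1$ are each isomorphic to an initial segment of the other yet are not isomorphic. Concretely, let $L=\eta+1+1+N$ with $x$ the first isolated point and $w$ its successor; then $L_{<x}\cong\eta$, $L_{<w}\cong\eta+1$, so $w>x$, $w\in\hat S$, but $L_{<w}\not\cong L_{<x}$. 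Thus your displayed equality $\hat S=L_{\le x}\cup(\{w:L_{<w}\cong L_{<x}\}\cap L_{>x})$ fails, and the second invocation of Lemma~\ref{lin} (``nothing is added above $b$'') fails for the same reason.

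The repair is to follow the paper's proof of Lemma~\ref{sum} literally: take the invariant initial segment to be the downward closure of $S_x=\{y:L_{<y}\cong L_{<x}\}$ itself. This set is bounded by hypothesis on $x$, is definable via the Scott sentence of $L_{<x}$, and---since it lies entirely below $b$---is literally the same subset inside $L_{<b}$ and inside any $L_{<y}$ with $y>b$. An isomorphism $L_{<b}\to L_{<y}$ therefore sends this initial segment to itself, and replacing its restriction by the identity gives an isomorphism fixing $z$. With this change your Subcases~1 and~2 go through, and the rest of your write-up is fine.
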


The following immediate corollary of this lemma allows us to find a low Scott rank bound for omega-sums of simple linear orders.

\begin{corollary}\label{omsum}
If $L$ can be written as an $\omega$ indexed sum of the form
$$L=\sum_{i\in \omega} A_i$$
where $\SR(A_i)\leq\gamma$, we have that $\SR(L)\leq\gamma+8$.
\end{corollary}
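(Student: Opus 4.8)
The plan is to reduce the $\omega$-sum to a situation where Lemma \ref{seg} applies directly. Write $L=\sum_{i\in\omega}A_i$ with $\SR(A_i)\leq\gamma$ for all $i$. For $x\in L$, the initial segment $L_{\leq x}$ sits inside some finite block: there is an $n$ with $x\in A_n$, so $L_{\leq x}=A_0+A_1+\cdots+A_{n-1}+(A_n)_{\leq x}$. I would first bound the Scott rank of such a finite initial segment. By Lemma \ref{least} (in the form noted after it, allowing the bottom summand to have a greatest element, which $(A_n)_{\leq x}$ does since $x$ is its top element) and by Lemma \ref{sum}, each $\SR(A_i)\leq\gamma$ contributes, and a finite sum of orders each of Scott rank $\leq\gamma$ has Scott rank $\leq\gamma+4$; indeed, repeatedly applying Lemma \ref{sum} to a sum $A_0+A_1+\cdots+A_m$ keeps the bound at $\max_i\SR(A_i)+4$ because the "$+4$" does not accumulate — at each stage we are summing something of rank $\leq\gamma+4$...

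Here the danger is precisely that the "$+4$" could compound. To avoid this I would instead use the general fact recorded after Lemma \ref{least}: $(A_n)_{\leq x}$, as well as each full block $A_i$, is $\Delta_0^{in}$-definable over parameters in $L_{\leq x}$ (the blocks are cut out by the finitely many cut-points, which we name as parameters), so each has Scott rank $\leq\SR(L_{\leq x})$ — but that is the wrong direction. The clean route is: view $L_{\leq x}$ itself as a finite sum $\sum_{i<n}1+A_i'$ after absorbing each cut point, where $A_i'$ differs from $A_i$ by at most one endpoint and still has $\SR(A_i')\leq\gamma$ by Lemma \ref{least}/the $\Delta_0^{in}$ remark, together with a final summand $(A_n)_{\leq x}$ of Scott rank $\leq\gamma$ for the same reason. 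Corollary \ref{finsum} then gives $\SR(L_{\leq x})\leq\gamma$ outright — no $+4$ at this stage — for every $x\in L$.

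With $\forall x\in L\ \SR(L_{\leq x})\leq\gamma$ established, Lemma \ref{seg} immediately yields $\SR(L)\leq\gamma+4$, which is even better than the claimed $\gamma+8$. The slack of $8$ versus $4$ in the statement presumably anticipates a coarser bookkeeping (e.g.\ if one only knows $\SR(L_{\leq x})\leq\gamma+4$ from a cruder finite-sum estimate, then Lemma \ref{seg} gives $\gamma+8$), so I would be content to prove the weaker stated bound: even granting a wasteful $\SR(L_{\leq x})\leq\gamma+4$ for each $x$, Lemma \ref{seg} gives $\SR(L)\leq\gamma+8$.

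The main obstacle is the one flagged above — making sure that in passing from the blocks $A_i$ to the finite initial segments $L_{\leq x}$ the constant does not grow with the number of blocks. The resolution is to apply Corollary \ref{finsum} (which has \emph{no} additive loss) rather than iterating Lemma \ref{sum}, using that every finite initial segment of $L$ is a finite sum of the shape $\sum 1+A_i'$ with the pieces of Scott rank $\leq\gamma$; everything else is a one-line invocation of Lemma \ref{seg}.
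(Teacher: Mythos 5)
There is a genuine gap: your key step assumes that the junctions between consecutive blocks are ``cut-points'' that can be named as parameters, so that $L_{\leq x}$ decomposes as $\sum_{i}1+A_i'$ with each piece of Scott rank $\leq\gamma$ and Corollary \ref{finsum} applies with no loss. But the blocks $A_i$ need not have greatest or least elements (think of each $A_i$ a copy of $\mathbb{Z}$, $\mathbb{Q}$, or $\omega^{\beta*}+\omega^\beta$-type orders), so there is in general no element of $L$ sitting at the boundary between $A_i$ and $A_{i+1}$: the boundary is a cut, not a point. Consequently the decomposition you need for Corollary \ref{finsum} simply does not exist, and the conclusion $\SR(L_{\leq x})\leq\gamma$ is unjustified --- indeed it is false in general, which is exactly why Lemma \ref{sum} carries a ``$+4$'': an unmarked junction between two orders of rank $\leq\gamma$ can push the rank of the sum above $\gamma$. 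Your fallback remark (``even granting $\SR(L_{\leq x})\leq\gamma+4$, Lemma \ref{seg} gives $\gamma+8$'') states the right target, but you never actually obtain the uniform bound $\gamma+4$ on all initial segments: the only route you offer is the flawed cut-point decomposition, and the compounding of $+4$'s that you yourself flag when iterating Lemma \ref{sum} is left unresolved.

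The paper's proof closes precisely this hole by manufacturing marked points: choose an arbitrary $a_i\in A_i$ in each block and regroup the finite partial sum as
\[
A_{0,<a_0}+\Bigl(\sum_{i<k-1} 1+\bigl(A_{i,>a_i}+A_{i+1,<a_{i+1}}\bigr)\Bigr)+1+A_{k,<a_k}.
\]
Each piece $A_{i,>a_i}+A_{i+1,<a_{i+1}}$ is a sum of just two orders of rank $\leq\gamma$, so a single application of Lemma \ref{sum} bounds it by $\gamma+4$, and since the $a_i$ now serve as the distinguished ``$1$''s, Corollary \ref{finsum} applies and keeps every finite partial sum at rank $\leq\gamma+4$ with no accumulation. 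Every $L_{<x}$ sits inside such a finite partial sum (and is $\Delta^{\mathrm{in}}_0$-definable there with parameter $x$), so $\SR(L_{<x})\leq\gamma+4$ for all $x$, and Lemma \ref{seg} then gives $\SR(L)\leq\gamma+8$. If you incorporate this regrouping device into your argument, your overall outline (bound all initial segments, then apply Lemma \ref{seg}) becomes correct, but the stated bound should remain $\gamma+8$, not $\gamma+4$.
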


\begin{proof}
Let $a_i\in A_i$ be chosen arbitrarily. Consider 
$$L_k=\sum_{i\in k} A_i = \sum_{i\in k} A_{i,<a_i}+1+A_{i,>a_i}=A_{0,<a_0}+\big(\sum_{i < k-1} 1+A_{i,>a_i} + A_{i+1,<a_{i+1}}\big)+1+ A_{k,<a_{k}}.$$
Note that Lemma \ref{sum} gives that for each $i$ we have that $\SR(A_{i,>a_i} + A_{i+1,<a_{i+1}})\leq\gamma+4$. Therefore, Corollary \ref{finsum} gives that $\SR(L_k)\leq\gamma+4$. This gives that for any $x\in L$, for some $k$ $L_{<x}\sqsubseteq L_k$, and so $\SR(L_{<x})\leq\gamma+4.$ In total, Lemma \ref{seg} gives that $\SR(L)\leq\gamma+8$ as desired.

\end{proof}

With these notions established, we are now ready to move to the proof of $\om$-VC conjecture for linear orders.


\section{$\om$-VC for Linear Orders}
In this section we will show that all extensions of Linear Orders satisfy $\om$-VC. This will be achieved by using the criteria established in Lemma \ref{lem: VC splitting}. We will first define some relevant properties and tools needed for the proof. In particular, we will describe how we can change linear orders while maintaining the same $\alpha$ theory. From there the proof will split into cases. First we will consider the case that there are multiple points that form successive intervals of Scott rank at least $\alpha+\omega$. Then we will move to the case where all of the points are relatively c lose together.

\subsection{Understanding and justifying the major tools}

\subsubsection{The replacement lemmas}
In order to explore the space of linear orders that are $\alpha$ equivalent to a given linear order $L$, we will need ways to transform linear orders without changing their $\alpha$ theories. This will take the form of replacing intervals with $\alpha$ equivalent linear orders of certain Scott ranks. We show this always possible with finite error on the Scott rank, so long as we have that the structure is $(\alpha,\alpha+\omega)$-small.

\begin{lemma}\label{prime}
There is a non-decreasing function $f:\omega\to\omega$ which, given an $(\alpha,\alpha+\omega)$-small structure $L$ with $\SR(L)\geq\alpha+n$, guarantees that there is a structure $P$ with
$$L\equiv_{\alpha+n} P \text{  and   }  \alpha+n\leq\SR(P)\leq\alpha+f(n).$$
\end{lemma}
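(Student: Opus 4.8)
The goal is to produce, from an $(\alpha,\alpha+\omega)$-small linear order $L$ with $\SR(L) \geq \alpha + n$, a linear order $P$ with $L \equiv_{\alpha+n} P$ and $\alpha + n \leq \SR(P) \leq \alpha + f(n)$ for some fixed non-decreasing $f$ that does not depend on $L$ or $\alpha$. The strategy I would adopt is to find $P$ of the form ``$L$ with a bounded-complexity core plus an $\omega$-sum tail,'' leveraging Corollary \ref{omsum} to cap the Scott rank. More precisely, since $\SR(L) \geq \alpha + n$ but we want an upper bound, the natural move is to locate inside $L$ (or inside the space of structures $\equiv_\alpha L$) a decomposition $L \equiv_{\alpha+n} A + \sum_{i \in \omega} C_i$ (or a finite sum of such blocks) where each summand has Scott rank bounded by something of the form $\alpha + (\text{constant depending on } n)$, and then apply Corollaries \ref{finsum} and \ref{omsum} together with Lemma \ref{sum} to conclude $\SR(P) \leq \alpha + f(n)$.

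The key technical input is $(\alpha,\alpha+\omega)$-smallness: for each $\gamma < \alpha + \omega$ — in particular for each $\gamma = \alpha + k$ — there are only countably many $\equiv_{\alpha+k}$-classes among structures $\equiv_\alpha L$. The plan is to use this to show that among the cofinal sequence of initial segments of $L$ (using Lemma \ref{cofseq} when $L$ has no greatest element) one can find two initial segments $L_{<b} \sqsubseteq L_{<b'}$ that are $\equiv_{\alpha+n+c}$-equivalent for a suitable constant $c$ and such that the ``difference interval'' $[b,b')$ is $\equiv$-repeatable — this is exactly the kind of situation that feeds Lemma \ref{fixaut} and lets one replace a tail of $L$ by an $\omega$-sum of copies of a single bounded-complexity block while preserving the $(\alpha+n)$-theory. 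The preservation of the $\equiv_{\alpha+n}$-theory under such a replacement should follow from the standard back-and-forth analysis of $\equiv_\beta$ (as cited from \cite[Definition II.3.2]{Part2} and the sum-of-orders lemmas), since we are only modifying a final segment in a way that is $\equiv_{\alpha+n}$-invisible.

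The main obstacle, I expect, is twofold. First, controlling the Scott rank increment precisely: each application of Lemma \ref{sum} costs $+4$ and each $\omega$-sum (Corollary \ref{omsum}) costs $+8$, so I need to make sure only boundedly many such operations are chained — the bound on the number of operations should come from $n$ itself (e.g. by an induction on $n$, peeling off one ``bad'' interval at a time, each peeling reducing a rank-parameter), which is what makes $f$ well-defined and non-decreasing. Second, and more delicate, is guaranteeing that the replacement can actually be performed *inside the $\equiv_\alpha L$ class* — i.e. that the witnesses promised by smallness and by Lemma \ref{fixaut}-type arguments exist; this is where $(\alpha,\alpha+\omega)$-smallness is essential, because without it the relevant pigeonhole (finitely/countably many classes forces a repeat along an $\omega$-indexed family) fails. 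I would structure the write-up as: (1) reduce to the case $L$ has no greatest element (otherwise split off the top point via Lemma \ref{leasteq}); (2) use smallness + a pigeonhole along a cofinal $\omega$-chain of initial segments to find a repeatable block; (3) build $P$ by replacing a tail with an $\omega$-sum of that block; (4) verify $L \equiv_{\alpha+n} P$ via back-and-forth; (5) bound $\SR(P)$ by Corollaries \ref{finsum} and \ref{omsum}, reading off $f(n)$; (6) handle the lower bound $\SR(P) \geq \alpha + n$ by noting the construction keeps $P \equiv_{\alpha+n} L$ and $L$ already witnesses rank $\geq \alpha+n$ — though here I should be careful, since $\equiv_{\alpha+n}$-equivalence does not by itself transfer Scott rank $\geq \alpha+n$, so I may instead need to arrange that $P$ literally contains a definable copy of a high-rank piece of $L$, or argue the lower bound directly from the construction.
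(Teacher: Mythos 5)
Your plan does not match the paper's argument, and it has genuine gaps. The paper's proof of Lemma \ref{prime} is purely model-theoretic and works for arbitrary structures (the statement says ``structure,'' not ``linear order''): by Lemmas \ref{bound} and \ref{intsr}, the condition ``$P\equiv_{\alpha+n}L$ and $\SR(P)\geq\alpha+n$'' is expressible by a $\Pii_{\alpha+f(n)}$ sentence; one then applies the omitting types theorem for infinitary logic to get a model of this sentence omitting the (by $(\alpha,\alpha+\omega)$-smallness, only countably many) non-$\Sii_{\alpha+f(n)}$-supported $\Pii_{\alpha+f(n)}$ types occurring in models $\alpha$-equivalent to $L$; a structure all of whose $\Pii_{\alpha+f(n)}$ types are $\Sii_{\alpha+f(n)}$-supported has Scott rank at most $\alpha+f(n)$, and the lower bound is built into the sentence itself. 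Your structural route via sums, $\omega$-sums and Lemma \ref{fixaut} is confined to linear orders, so even if it worked it would not prove the statement as given.

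Concretely, two steps of your plan fail. First, the pigeonhole you invoke is false: $(\alpha,\alpha+\omega)$-smallness gives \emph{countably} many $\equiv_{\alpha+k}$-classes, and countably many classes do not force a repetition along an $\omega$-indexed chain of initial segments (only finitely many classes would); moreover smallness is about the class of structures $\equiv_\alpha L$, not about the initial segments of $L$, so it does not directly control them at all. Second, and more fundamentally, your construction needs a supply of ``bounded-complexity blocks'' that are $\alpha$-equivalent to pieces of $L$ but have Scott rank at most $\alpha+O(n)$ --- nothing in $L$ itself guarantees such blocks exist (all intervals of $L$ could have Scott rank $\geq\alpha+\omega$), and producing such replacements is precisely the content of Lemma \ref{prime} (and its interval form, Lemma \ref{interval}), so the plan is essentially circular. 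You also correctly flag, but do not resolve, the lower bound $\SR(P)\geq\alpha+n$; in the paper this is handled by making $\SR\geq\alpha+n$ part of the $\Pii_{\alpha+f(n)}$ sentence via Lemma \ref{intsr}. The missing idea, in short, is to abandon surgery on $L$ and instead use definability of $\equiv_{\alpha+n}$-equivalence and of Scott rank together with omitting types, with smallness serving only to keep the set of types to omit countable.
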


\begin{proof}
By the observation in Lemma \ref{bound} and Lemma \ref{intsr}, the statement that a structure is $\alpha+n$ equivalent to $L$ and has Scott rank greater than or equal to $\alpha+n$ is $\Pi_{\alpha+f(n)}$ for some function $f:\omega\to\omega$. Therefore, by the type omitting theorem for infinitary logic (see \cite{Part2} Chapter 2.4), there is a model $P$ such that $P\equiv_{\alpha+n} L$ and $\SR(P)\geq\alpha+n$ that omits all countably many of the non-$\Sigma_{\alpha+f(n)}^{in}$ supported $\Pi_{\alpha+f(n)}^{in}$ types that occur in models $\alpha$ equivalent to $L$ (there are only countably many by $(\alpha,\alpha+\omega)$-smallness). As the resulting structure is $\alpha$ equivalent to $L$, it has no non-$\Sigma_{\alpha+f(n)}^{in}$ supported $\Pi_{\alpha+f(n)}^{in}$ types. This gives that $\SR(P)\leq\alpha+f(n)$.
\end{proof}

In practice, this lemma is used to replace intervals in a linear order. This is important because, often times, replacing an interval will actually maintain the Scott rank of the overall structure so the resulting structure will be a witness to the key property that proves $\om$-VC.

\begin{lemma}\label{interval}
There is a non-decreasing function $f:\omega\to\omega$ which, given an $(\alpha,\alpha+\omega)$-small linear order, $L$ and any $x,y\in L$ such that $\SR([x,y])\geq\alpha+n$, guarantees that there is a linear order $P$ with
$$[x,y]\equiv_{\alpha+n} P \text{  and   }  \alpha+n\leq\SR(P)\leq\alpha+f(n).$$
\end{lemma}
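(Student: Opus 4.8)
The plan is to reduce Lemma~\ref{interval} to Lemma~\ref{prime} by a relativization argument: the interval $[x,y]$ of $L$ is itself a linear order, and what we need is that it inherits $(\alpha,\alpha+\omega)$-smallness from $L$, that its Scott rank is controlled, and that the replacement produced by Lemma~\ref{prime} can be glued back into $L$ in place of $[x,y]$ without harm. So first I would observe that $[x,y]$, being a $\Delta_0^{in}$-definable substructure of $L$ over the parameters $x,y$, satisfies $\SR([x,y])\le\SR(L)$ by the general fact recorded after Lemma~\ref{least}; more importantly I would argue that $(L,x,y)$ being $(\alpha,\alpha+\omega)$-small forces $[x,y]$ to be $(\alpha,\alpha+\omega)$-small. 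The point here is that a $\equiv_\gamma$-splitting of the possible interval-orders $[x,y]'$ with $[x,y]'\equiv_\alpha[x,y]$ would, via replacement inside $L$, produce a $\equiv_\gamma$-splitting of the orders $\equiv_\alpha$-equivalent to $L$ — one uses the relativized back-and-forth formulas $\varphi^{>c}$, $\varphi^{<c}$ of Lemma~\ref{leasteq}'s proof, which translate $\Pii_\gamma$ (and $\Pii_\alpha$) formulas about a substructure into $\Pii_\gamma$ (resp.\ $\Pii_\alpha$) formulas about $L$ relative to the parameters. So the smallness is genuinely inherited.

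With that in hand, apply Lemma~\ref{prime} to the linear order $[x,y]$ (using the same ambient ordinal $\alpha$, noting $\SR([x,y])\ge\alpha+n$ by hypothesis): we get a structure $P$ — necessarily a linear order, since it is $\equiv_\alpha$-equivalent to a linear order and "being a linear order" is a $\Pii_2$ sentence, hence preserved under $\equiv_\alpha$ for $\alpha$ above the quantifier rank of the axioms — with $[x,y]\equiv_{\alpha+n}P$ and $\alpha+n\le\SR(P)\le\alpha+f(n)$, where $f$ is exactly the non-decreasing function furnished by Lemma~\ref{prime}. This $P$ is the desired order, and the function $f$ in the statement of Lemma~\ref{interval} is the same one. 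The only thing that remains is to make sure the statement "$\equiv_{\alpha+n}$ to $[x,y]$ and Scott rank $\ge\alpha+n$" really is of complexity $\Pii_{\alpha+g(n)}$ for a suitable $g$, so that Lemma~\ref{prime}'s type-omitting applies: this is handled by Lemma~\ref{bound} (for the $\equiv_{\alpha+n}$ part, complexity $\Pii_{2(\alpha+n)}$, which is $\Pii_{\alpha+g(n)}$ for $\alpha$ a limit ordinal plus a finite part — cf.\ the remark after Lemma~\ref{intsr}) and Lemma~\ref{intsr} (for the Scott rank part).

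The main obstacle I anticipate is the inheritance of $(\alpha,\alpha+\omega)$-smallness — spelling out carefully that a rich family of pairwise $\equiv_\gamma$-inequivalent candidate intervals yields, after substitution back into $L$, a rich family of pairwise $\equiv_\gamma$-inequivalent orders still $\equiv_\alpha L$. One has to check both directions: that substitution preserves $\equiv_\alpha$-equivalence to $L$ (so the new orders are legitimately counted by $L$'s smallness) and that it reflects $\equiv_\gamma$-inequivalence of the intervals up to $\equiv_\gamma$-inequivalence of the wholes (so the count is genuinely large). The relativization of formulas to an interval, together with the fact that an automorphism / back-and-forth system on $L$ fixing $x$ and $y$ must restrict to one on $[x,y]$, is what makes both directions go through; but the bookkeeping with the ordinals $\gamma<\alpha+\omega$ and the quantifier-complexity shifts introduced by relativization needs to be done with a little care. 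Everything else is a direct appeal to Lemma~\ref{prime} and the preservation of "linear order" under low-level infinitary equivalence.
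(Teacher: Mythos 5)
Your overall strategy is exactly the paper's: reduce Lemma \ref{interval} to Lemma \ref{prime} by showing that the interval inherits $(\alpha,\alpha+\omega)$-smallness from $L$, the inheritance being proved by substituting candidate orders $M_A\equiv_\alpha [x,y]$ back into $L$ (so $L\equiv_\alpha L_{<x}+M_A+L_{>y}$, which the paper justifies by Steel's remark on sums preserving back-and-forth relations). The one step you should rephrase is your ``reflection'' direction: it is not true that $\equiv_\gamma$-inequivalence of the replacements passes to $\equiv_\gamma$-inequivalence (or even non-isomorphism) of the glued wholes at the sentence level, since the outer summands can absorb the difference (already $1+\omega\cong 2+\omega$), so a splitting ``of the wholes'' cannot be what you count. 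The paper sidesteps this by keeping the endpoints as parameters: if $[x,y]$ were not small, continuum many $(\alpha+m)$-types would be realized among orders $\equiv_\alpha[x,y]$, and then the triples $(x,z_A,y)$ realize continuum many distinct $(\alpha+m)$-types among models $\equiv_\alpha$-equivalent to $L$; this contradicts smallness because any two structures in the same $\equiv_{\gamma+2}$-class realize the same $\Pii_\gamma$-types (and $\gamma+2<\alpha+\omega$), so countably many classes would yield only countably many realized types. Your relativized-formula mechanism over the parameters $x,y$ gives exactly this type-level distinction, so the repair is minor; the rest of your write-up (the interval, initial and end segments being handled alike, $P$ remaining a linear order, and the complexity bookkeeping feeding into Lemma \ref{prime}) matches the paper or is a harmless addition.
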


\begin{proof}
Because of the previous lemma it is enough to show that each interval of $L$ is also $(\alpha,\alpha+\omega)$-small. If there were some $[x,y]$ that were not $(\alpha,\alpha+\omega)$-small, for some $m$ there would be continuum many $\alpha+m$ types realized among models $\alpha$ equivalent to $[x,y]$. Index these types $p_A$ by reals $A\in2^\omega$ and say that the $A$th type is realized by $z_A$ in $M_A\equiv_\alpha[x,y]$. Note that
$$L\equiv_\alpha L_{<x}+M_A+L_{>y},$$
by \cite[Remark 1.5.4]{Ste78}.
Furthermore, it is not difficult to see that $(x,z_A,y)$ realizes a different $\alpha+m$ type for each $A\in2^\omega$ where $x$ and $y$ are the maximal and minimal elements of $M_A$ respectively. Therefore, there are continuum many $\alpha+m$ types realized among models $\alpha$ equivalent to $L$, a contradiction to smallness.
\end{proof}

It is worth nothing that the above proof works equally well if $x=-\infty$ or if $y=\infty$. In other words, initial and end segments are also $(\alpha,\alpha+\omega)$-small. Similarly, using open versus closed intervals makes little difference for the argument.

Sometimes, we want to replace convex suborders that are not bounded by explicit elements in the order. In this case we must make some minor concessions on the Scott rank of the replacement order, but ultimately a similar result holds. For the following we assume we are given the fixed function $f$ from Lemma \ref{interval}.

\begin{lemma}\label{sub}
If $L$ is an $(\alpha,\alpha+\omega)$-small linear order, then for any convex suborder $K\subset L$ there is an order $K'\equiv_\alpha K$ such that $\SR(K')\leq\alpha+f(0)+8$.
\end{lemma}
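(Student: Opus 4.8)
The plan is to reduce the case of an arbitrary convex suborder $K \subseteq L$ to the case of a closed interval $[x,y]$, for which Lemma \ref{interval} already applies, by approximating $K$ from inside with an increasing $\omega$-chain of closed intervals and then gluing the pieces together with the $\omega$-sum estimate from Corollary \ref{omsum}. Concretely: write $K$ as a union of convex pieces indexed by $\mathbb{Z}$ (or by $\omega$, or finitely, depending on whether $K$ has a least and/or greatest element), where each piece is a closed interval $[x_i,y_i]$ with $y_i \le x_{i+1}$ and $\bigcup_i [x_i,y_i] = K$. One should first dispose of the degenerate cases: if $K$ itself has both endpoints in $L$ it is literally an interval and Lemma \ref{interval} gives $\SR(K') \le \alpha + f(0) \le \alpha+f(0)+8$; if $K$ is a single ray one uses the remark after Lemma \ref{interval} that initial and end segments are $(\alpha,\alpha+\omega)$-small and applies the interval lemma there as well. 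So the real content is the two-sided-unbounded (i.e. $\mathbb{Z}$-indexed) case.

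For each piece $[x_i, y_i]$, apply Lemma \ref{interval} (with $n = 0$, using that $[x_i,y_i]$ is $(\alpha,\alpha+\omega)$-small as noted in the remark after that lemma) to get a linear order $P_i \equiv_\alpha [x_i,y_i]$ with $\SR(P_i) \le \alpha + f(0)$. Now set $K' = \sum_{i} P_i$, summed over the same index set in the same order. Two things need to be checked. First, $K' \equiv_\alpha K$: this follows from the fact that $\equiv_\alpha$ is a congruence with respect to (possibly infinite) ordered sums — replacing each summand of an ordered sum by an $\equiv_\alpha$-equivalent one yields an $\equiv_\alpha$-equivalent sum — which is the same back-and-forth fact already invoked via \cite[Remark 1.5.4]{Ste78} in the proof of Lemma \ref{interval}, now applied to an $\omega$- or $\mathbb{Z}$-indexed partition rather than a single cut. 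Second, the Scott rank bound: each $P_i$ has $\SR(P_i) \le \alpha + f(0)$, and $K'$ is an $\omega$-indexed (or $\mathbb{Z}$-indexed, hence a sum of two $\omega$-indexed pieces, one of them reversed) sum of the $P_i$, so Corollary \ref{omsum} gives $\SR(K') \le \alpha + f(0) + 8$. In the two-sided case one writes $K' = K'_{\le} + K'_{\ge}$ where $K'_{\ge} = \sum_{i \ge 0} P_i$ and $K'_{\le} = \sum_{i < 0} P_i$; Corollary \ref{omsum} bounds each by $\alpha+f(0)+8$, and then Lemma \ref{sum} would push this to $\alpha+f(0)+12$ — to get exactly $\alpha+f(0)+8$ one should instead apply Lemma \ref{seg} directly to $K'$, observing that every proper initial segment of $K'$ is contained in some finite sub-sum $\sum_{-m \le i \le m} P_i$, which by Corollary \ref{finsum} has Scott rank $\le \alpha + f(0)$ (here one also uses Lemma \ref{sum} to absorb the "boundary" summands, exactly as in the proof of Corollary \ref{omsum}).

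The step I expect to be the main obstacle is the honest verification that the partition of $K$ into a cofinal/coinitial chain of closed intervals actually exists and that $\equiv_\alpha$ genuinely respects the resulting infinite ordered sum — i.e., pinning down that "$\alpha$-equivalence is a congruence for ordered sums over a fixed index order" in the generality needed (arbitrary countable index, summands replaced simultaneously). This is folklore and is implicitly used throughout \cite{MonIntermediate} and \cite{Ste78}, but making sure the cited back-and-forth remark covers the $\mathbb{Z}$-indexed case, together with the bookkeeping to land on the constant $8$ rather than something slightly larger, is where the care is needed; the Scott-rank arithmetic itself is then a routine application of Corollaries \ref{finsum} and \ref{omsum} and Lemma \ref{seg}.
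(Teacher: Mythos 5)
Your overall route is essentially the paper's: split the convex suborder into closed intervals along a cofinal (and coinitial) sequence as in Lemma \ref{cofseq}, replace each interval by an $\alpha$-equivalent order of Scott rank at most $\alpha+f(0)$ via Lemma \ref{interval} (intervals inheriting $(\alpha,\alpha+\omega)$-smallness), and bound the resulting sum with Corollary \ref{omsum}. The gap is in your two-sided case. For a $\mathbb{Z}$-indexed sum $K'=\sum_{i\in\mathbb{Z}}P_i$ it is simply false that every proper initial segment is contained in a finite sub-sum $\sum_{-m\le i\le m}P_i$: the initial segment determined by a point of $P_m$ contains \emph{all} of $P_i$ for every $i<m$, i.e.\ the entire coinitial $\omega^*$-tail. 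So your proposed application of Lemma \ref{seg} does not go through as written; if you bound those initial segments correctly (each is an $\omega^*$-sum of the $P_i$ together with an initial piece of one $P_m$, so reversing and applying Corollary \ref{omsum} and Lemma \ref{least} gives rank $\le\alpha+f(0)+8$), Lemma \ref{seg} only returns $\alpha+f(0)+12$ --- exactly the loss you were trying to avoid by not using Lemma \ref{sum}. Since the statement asserts the bound $\alpha+f(0)+8$, this step does not prove it.

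The paper lands on $8$ by a different gluing, and your decomposition admits the same fix: choose a single point $x\in K$ (in your setup, a shared endpoint such as $x_0$), treat $K_{\ge x}$ and $K_{\le x}$ (the latter via its reverse) separately, obtaining replacements of rank $\le\alpha+f(0)+8$ by Corollary \ref{omsum}, and then glue them back with the point $x$ explicitly in the middle as a ``$1$''. Because the bottom summand then has a greatest element, Lemma \ref{leasteq} (equivalently Corollary \ref{finsum}) gives that the Scott rank of the glued order is exactly the maximum of the two halves, with no extra $+4$, which yields $\alpha+f(0)+8$. A minor additional point: a convex suborder with exactly one endpoint need not be an initial or end segment of $L$, so the remark after Lemma \ref{interval} does not apply to such a ``ray'' directly; this is harmless, since your $\omega$-sum machinery (or the paper's, applied to $K_{\ge x}$) already covers that case.
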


\begin{proof}
Pick some $x\in K$. We first show that for $K_{\geq x}$, there is a $A\equiv_\alpha K_{\geq x}$ such that $\SR(A)\leq\alpha+f(0)+8$.  If $K$ has a maximal element, $y$, $K_{\geq x}=[x,y]$, so the result follows directly from Lemma \ref{interval}. Thus, we may assume that $K$  has no maximal element. This means that, we can find a cofinal, order preserving, injective map $i:\omega\to K$ with $i(0)=x$ by Lemma \ref{cofseq}. In this case, $K_{\geq x}=\sum_\omega[i(n),i(n+1))$. Replace each $(i(n),i(n+1))$ of Scott rank greater than $\alpha$ with an $\alpha$-equivalent model with Scott rank less than $\alpha+f(0)$. Call the resulting order $C$. Note that Corollary \ref{omsum} yields that $\SR(C)\leq\alpha+f(0)+8$. Doing the same procedure for $K^*_{\geq x}$ yields that we can replace $K_{\leq x}$ with $B$ such that $\SR(D)\leq\alpha+f(0)+8$. In total, this gives that $K\equiv_\alpha C+1+D=K'$ and that $\SR(K')\leq \alpha+8$ by Corollary \ref{finsum}, as desired.
\end{proof}

\subsubsection{The convex $\alpha$ equivalence relation} 
We consider a convex equivalence relation on linear orders in a manner similar to (but not the same as) \cite{Ste78} and \cite{MonIntermediate} . This will be key to the process of the proof. In particular, we will distinguish between the case that $L/\sim_{\alpha+\omega}$ has less than 3 elements and the case that is has at least 3 elements, and distinguish between the case that $L/\sim_{\alpha+f(0)+10}$ is an ordinal and the case where it is not an ordinal.

\begin{definition}
Given an ordinal $\alpha\in\omega_1$, a linear order $L$, and two points $a<b\in L$  we say that $$a\sim_\alpha b$$ if and only if $\SR((a,b))<\alpha$.
\end{definition}

Note that for any ordinal $\alpha$, it follows immediately from Corollary \ref{finsum} that $\sim_\alpha$ is an equivalence relation. This stands in contrast to previous versions of this equivalence relation which would only work for limit ordinals $\alpha$. This is because they were using unparameterized Scott rank or an older notion of Scott rank, so they did not have Lemma \ref{leasteq}. This is a tangible benefit of using the notion of robust Scott rank defined earlier in this paper. Even if this relation has slightly different properties from similar relations already in the literature, because it is defined analogously, we will use the standard notation for it and related concepts. For example, $[x]_\alpha$ for $x\in L$ refers to the set of points in $L$ that are in the $\alpha$ equivalence class of $x$ within $L$. It is worth noting that as these equivalence classes are convex subsets of a linear order, they inherit the structure of a linear order themselves.

\subsubsection{Upwards closedness}
In an effort to formalize exactly what part of a linear order deserves our focus, we turn to a ranked form of having large end segments. 

\begin{definition}
A linear order, $L$, is $\gamma$ upwards-closed (or $\gamma-UC$) if for all $x\in L$ we have that $\SR(L_{\geq x})\geq\gamma$. 
\end{definition}

In the analysis that follows we will often be concerned with linear orders that are $(\alpha+\omega)-UC$.  We will refer to this as condition \textbf{(1)} for the sake of parsimony. This notion may seem strange at first; a motivating example will elucidate the usefulness of this idea. Consider the ordinal $\omega^\beta$ for any ordinal $\beta$. Note that any end segment is isomorphic to the whole of $\omega^\beta$. In particular, $\SR(\omega^\beta)=\SR(\omega^\beta_{\geq x})= 2\beta$ so $\omega^\beta$ is $(2\beta)-UC$. Therefore, the complexity of the order is really in its end segments and any initial segment can be replaced without lowering the complexity of the overall structure too much.

\subsection{Case 1: $L/\sim_{\alpha+\omega}$ has many elements}

In this section we prove the theorem in the case that $L/\sim_{\alpha+\omega}$ has at least 3 elements. The formula that splits the model on states that there is an initial or end segment of Scott rank between $\alpha+f(0)+9$ and $\alpha+f(f(0)+9)$. To be precise we find models $\alpha$ equivalent to an $(\alpha,\alpha+\omega)$-small $L$ with Scott rank at least $\alpha+\omega$ that disagree on one of the following formulae:
$$\psi_\geq:~~~~ \exists x ~\bigvee_{f(0)+9\leq i\leq f(f(0)+9)}\SR(L_{\geq x})=\alpha+i,$$
$$\psi_\leq:~~~~ \exists x ~\bigvee_{f(0)+9\leq i\leq f(f(0)+9)}\SR(L_{\leq x})=\alpha+i.$$
Because of Lemma \ref{intsr}, $\psi_\geq$ and $\psi_\leq$ are $\Sigma_{<(\alpha+\omega)}^{in}$. In the next two subsections we will explain how to satisfy these formula in a model $\alpha$-equivalent to $L$ and then we will explain how to satisfy one of their negations in a model $\alpha$-equivalent to $L$. This will imply condition \ref{part: VC splitting 2} of Lemma \ref{lem: VC splitting} in this case.

\subsubsection{Satisfying $\psi_\leq$ and $\psi_\geq$.}

In this section we will show that any $(\alpha,\alpha+\omega)$-small linear order $L$ with $|L/\sim_{\alpha+\omega}|\geq 3$ can be made to satisfy $\psi_\leq$ and $\psi_\geq$ while maintaining the $\alpha$ theory. 

\begin{lemma} \label{satisfiespsi}
If $L$ is $(\alpha,\alpha+\omega)$-small and $L/\sim_{\alpha+\omega}$ has 3 or more equivalence classes, there is an $A\equiv_\alpha L$ such that $A\models\psi_\leq$ and $\SR(A)\geq\alpha+\omega$ and a $B\equiv_\alpha L$ such that $B\models\psi_\geq$ and $\SR(B)\geq\alpha+\omega$.
\end{lemma}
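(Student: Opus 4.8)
The plan is to produce the two witnesses separately, starting with $B \models \psi_\geq$, since $B$ is obtained by pushing complexity \emph{downward} (towards an initial segment), which is slightly more transparent, and then deriving $A \models \psi_\leq$ by the reversal symmetry $L \mapsto L^*$. Because $|L/\sim_{\alpha+\omega}| \geq 3$, there exist points $x_0 < x_1$ in $L$ such that the three convex pieces $L_{<x_0}$, the middle block, and $L_{>x_1}$ are ``separated'' in the sense that at least one of the first two, call it $K$, has $\SR(K) \geq \alpha+\omega$ while there is still a nonempty $\alpha$-class strictly above it. More carefully: choose representatives so that $(x_0,x_1)$ contains at least one full $\sim_{\alpha+\omega}$-class that is neither the top nor the bottom class; then both $L_{<x_1}$ and $L_{>x_0}$ have Scott rank $\geq \alpha+\omega$ (each contains a $\sim_{\alpha+\omega}$-class in its interior, so by Lemma \ref{least} its rank is unbounded below $\alpha+\omega$, hence $\geq \alpha+\omega$ since $\alpha+\omega$ is a limit).

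Now I would use the replacement machinery. Fix the non-decreasing $f$ from Lemma \ref{interval}. Working with $L$, split $L = L_{<x_1} + 1 + L_{>x_1}$ (overlapping at a point, as in the remark after Lemma \ref{least}), and apply Lemma \ref{sub} to the convex suborder $L_{>x_1}$ to replace it by $K' \equiv_\alpha L_{>x_1}$ with $\SR(K') \leq \alpha + f(0)+8$; by \cite[Remark 1.5.4]{Ste78} the resulting order $B_0 := L_{<x_1} + 1 + K'$ satisfies $B_0 \equiv_\alpha L$. Next apply Lemma \ref{interval}-style reasoning (or Lemma \ref{prime} directly to the interval) to the initial part $L_{<x_1}$: since $\SR(L_{<x_1}) \geq \alpha + \omega \geq \alpha + (f(0)+9)$, and $L_{<x_1}$ is $(\alpha,\alpha+\omega)$-small (by the remark after Lemma \ref{interval} that initial segments inherit smallness), there is $P \equiv_{\alpha+(f(0)+9)} L_{<x_1}$ with $\alpha+(f(0)+9) \leq \SR(P) \leq \alpha + f(f(0)+9)$. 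Since $P \equiv_\alpha L_{<x_1}$ (the $\alpha$-theory is coarser), setting $B := P + 1 + K'$ gives $B \equiv_\alpha L$ by \cite[Remark 1.5.4]{Ste78} again. Now $B$ has an initial segment, namely $P$ (or rather $P+1$), of Scott rank in the window $[\alpha+f(0)+9,\ \alpha+f(f(0)+9)]$ — by Lemma \ref{leasteq} adding the single point $1$ does not change the rank — so $B \models \psi_\geq$. Finally, since $B$ has $L_{<x_1}$-part replaced only up to $\equiv_{\alpha+(f(0)+9)}$ and the tail $K'$ has low rank, I must check $\SR(B) \geq \alpha+\omega$: here I use that $P$ already has $\SR(P) \geq \alpha+f(0)+9$... this is \emph{not} automatically $\geq \alpha+\omega$, so the genuine point is that we should instead replace a \emph{proper} initial sub-interval of $L_{<x_1}$ and keep enough of the high-complexity middle block intact. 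Concretely: keep a witnessing interval $[x_0,x_1]$ of Scott rank $\geq\alpha+\omega$ untouched in the middle, and only perform the bounding replacement on $L_{<x_0}$ via Lemma \ref{sub} and on $L_{>x_1}$ via Lemma \ref{sub}; then engineer the $\psi_\geq$-witness by choosing inside the untouched middle block $[x_0,x_1]$ a point $x$ with $\SR(L_{\geq x})$ landing in the required window, which is possible because $\SR$ of end segments of $[x_0,x_1]$ sweeps through values cofinally in $\alpha+\omega$ while $\SR(L_{>x_1})$ is already pinned below $\alpha+f(0)+8 \leq \alpha + f(f(0)+9)$.

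I expect the main obstacle to be exactly this bookkeeping: ensuring simultaneously that (i) the constructed order stays $\equiv_\alpha L$, (ii) its total Scott rank remains $\geq\alpha+\omega$, and (iii) there is a point whose end segment (resp.\ initial segment) has Scott rank \emph{exactly} in the narrow band $[\alpha+f(0)+9,\ \alpha+f(f(0)+9)]$ — not above, not below. The tension is that replacements which lower rank enough to get into the band risk dropping below it, while leaving high-complexity material risks overshooting $\alpha+f(f(0)+9)$; the resolution is to first bound the ``outside'' pieces ($L_{<x_0}$ and $L_{>x_1}$) down to rank $\leq \alpha+f(0)+8$ using Lemma \ref{sub}, so that by Lemma \ref{sum} and Corollary \ref{omsum} every end segment of the modified order that starts at or beyond $x_1$ has rank $\leq \alpha+f(0)+8+8 < \alpha+f(f(0)+9)$, and then use the \emph{untouched} middle block $[x_0,x_1]$ (which has rank $\geq\alpha+\omega$, hence admits, by Lemma \ref{least} applied to its sub-intervals, end segments realizing every sufficiently large rank below $\alpha+\omega$, in particular one in the band) to place the point $x$ witnessing $\psi_\geq$. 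Running the mirror-image argument on $L^*$ and translating back (initial segments of $L^*$ are end segments of $L$) produces $A \models \psi_\leq$ with $\SR(A) \geq \alpha+\omega$, completing the proof.
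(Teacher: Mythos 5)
Your construction does not reach the conclusion, and the gap is at the decisive step. In your final arrangement you replace only the outer pieces $L_{<x_0}$ and $L_{>x_1}$ by low-rank orders and leave the middle block $[x_0,x_1]$ untouched, hoping to find a point $x$ inside it with $\SR(L_{\geq x})$ landing in the window $[\alpha+f(0)+9,\,\alpha+f(f(0)+9)]$ ``because $\SR$ of end segments of $[x_0,x_1]$ sweeps through values cofinally in $\alpha+\omega$.'' There is no such intermediate-value principle: Lemma \ref{least} only gives \emph{lower} bounds for ranks of sums, and nothing forces any end segment of the block to realize a rank in a prescribed finite band. For instance, the middle block could be isomorphic to $\omega^{\delta}+1$ for a large additively indecomposable $\omega^\delta$, in which case every end segment at a point of the block has the same Scott rank $\geq \alpha+\omega$; points in the replaced tail give end segments of rank $\leq \alpha+f(0)+8$, and points below the block give rank $\geq\alpha+\omega$, so no point of your modified order witnesses $\psi_\geq$ at all. (There is also a sign slip in your abandoned first attempt: an \emph{initial} segment with rank in the band witnesses $\psi_\leq$, not $\psi_\geq$.)

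The missing idea is that the band is not something you locate inside the given order; it is exactly the output window of the replacement Lemma \ref{interval}, which must be applied to the \emph{whole} side you want to witness the formula. This is what the paper does: pick $x<y<z$ in three distinct $\sim_{\alpha+\omega}$-classes, so that $\SR(L_{\leq y})\geq\alpha+\omega$ (because of $[x,y]$) and $\SR(L_{\geq y})\geq\alpha+\omega$ (because of $[y,z]$). Applying Lemma \ref{interval} with $n=f(0)+9$ to $L_{\geq y}$ produces $N\equiv_\alpha L_{\geq y}$ with $\alpha+f(0)+9\leq\SR(N)\leq\alpha+f(f(0)+9)$, so $L_{<y}+N\models\psi_\geq$ (the least point of $N$ is the witness, using Lemma \ref{leasteq}), while the untouched $L_{<y}$ keeps the total rank $\geq\alpha+\omega$; symmetrically, pinning $L_{\leq y}$ to $M$ in the band gives $M+L_{>y}\models\psi_\leq$ with $L_{>y}$ supplying rank $\geq\alpha+\omega$. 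The role of the third class is precisely to guarantee that \emph{both} sides of the split at $y$ have rank $\geq\alpha+\omega$, so that one side can be pinned into the band while the other preserves the overall Scott rank; your configuration, which pins both outer pieces low and touches neither side of the high-rank block, cannot achieve this.
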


\begin{proof}
 Let $x<y<z$ be elements of $L$ in different $\sim_{\alpha+\omega}$ equivalence classes. Note that both $\SR(L_{\leq y})\geq\alpha+\omega$ and $\SR(L_{\geq y})\geq\alpha+\omega$. This is because they both have $\Delta_0^{in}$ definable subsets of Scott rank at least $\alpha+\omega$ ($[x,y]$ and $[y,z]$ respectively). Therefore, by Lemma \ref{interval} we can find $M\equiv_\alpha L_{\leq y}$ with $\alpha+f(0)+9\leq\SR(M)\leq\alpha+f(f(0)+9)$ and $N\equiv_\alpha L_{\geq y}$ with $\alpha+f(0)+9\leq\SR(N)\leq\alpha+f(f(0)+9)$. We can now observe that
$$M+L_{> y}\models \psi_\leq,$$
$$L_{< y} + N\models \psi_\geq.$$
Furthermore, both of these models are $\alpha$ equivalent to $L$ and are Scott Rank at least $\alpha+\omega$. Therefore, these models are the required $A$ and $B$ respectively.
\end{proof}

With this lemma we are able to satisfy $\psi_\leq$ and $\psi_\geq$, now all that remains is showing that we can satisfy one of their negations.

\subsubsection{Satisfying $\lnot\psi_\leq$ or $\lnot\psi_\geq$.}

We now aim to find an $\alpha$ equivalent model to $L$ satisfying $\lnot\psi_\leq$ or $\lnot\psi_\geq$. Unlike the first case, we will not be able to guarantee a particular one of these formulae. We proceed by defining a normal form for linear orders that we will always be able to transform our order into while maintaining the $\alpha$ theory. From this, our desired result will follow immediately. This will be useful more generally for the proof moving forward. For this reason, we also introduce a second important condition before moving on with the proof. This condition will only be used in later sections.

\begin{definition}
We will say that a linear order, $L$, satisfies condition \textbf{(2)} if for all $x\in L$ we have that $\SR(L_{\leq x})<\alpha+\omega$.
\end{definition}

Condition \textbf{(2)}, like condition \textbf{(1)}, is inspired by the analysis of ordinals. In particular, $\omega^{\alpha+\omega}$ has condition \textbf{(2)}, but no larger ordinals do. It also has condition \textbf{(1)} unlike any smaller ordinals. In this, condition \textbf{(1)} and condition \textbf{(2)} together pick out orders that act like $\omega^{\alpha+\omega}$  in an abstract sense. This is naturally useful as we are concerned with orders of Scott rank $\alpha+\omega$, and this is the canonical example of such an order.

One may wonder if there are very many other examples of orders that satisfy condition \textbf{(1)} and condition \textbf{(2)}. As it turns out they are quite common. In fact,  linear orders of sufficiently large Scott rank are often $\alpha$ equivalent to a linear order that contains a suborder satisfying condition \textbf{(1)} and condition \textbf{(2)}. The following lemma formalizes this idea by demonstrating the existence of the aforementioned normal form.

\begin{lemma}\label{finding12}
Consider an $(\alpha,\alpha+\omega)$-small linear order with $\SR(L)\geq\alpha+\omega$. Either $L\equiv_\alpha K+P$ or  $L^*\equiv_\alpha K+P$ where $K$ has \textbf{(1)} and \textbf{(2)} and $\SR(P)\leq\alpha+f(0)+8$.
\end{lemma}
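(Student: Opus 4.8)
The plan is to analyze a linear order $L$ that is $(\alpha,\alpha+\omega)$-small with $\SR(L)\geq\alpha+\omega$ by looking at the $\sim_{\alpha+\omega}$ equivalence classes, which are themselves linear orders. We saw in Case 1 (Lemma \ref{satisfiespsi}) that when $L/\sim_{\alpha+\omega}$ has three or more classes we can satisfy and fail $\psi_\leq$ or $\psi_\geq$, so the interesting situation is when $L/\sim_{\alpha+\omega}$ has one or two elements. The point of this lemma is to produce a canonical form regardless: after passing to $L$ or $L^*$, the order decomposes as $K+P$ with $K$ having conditions \textbf{(1)} and \textbf{(2)} and $P$ of bounded Scott rank. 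Intuitively $K$ is the "tall" part whose complexity lives entirely in its end segments, and $P$ is a low-rank tail.

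First I would split on the structure of $L/\sim_{\alpha+\omega}$. If it has at least two classes, take $x$ in the last class (or handle the case where there is no last class separately). Then $L = L_{<x}\text{-part} + (\text{last class})$; the last class, being $\sim_{\alpha+\omega}$-equivalent to a point, has the property that all its proper bounded initial segments have Scott rank $<\alpha+\omega$, which is close to condition \textbf{(2)}, and using Lemma \ref{sub} we can replace the lower part by something of Scott rank $\leq\alpha+f(0)+8$. If $L/\sim_{\alpha+\omega}$ is a single class, then already every proper bounded initial segment of $L$ has rank $<\alpha+\omega$, so $L$ itself is close to satisfying \textbf{(2)}, and I would need to arrange \textbf{(1)} — that every end segment has rank $\geq\alpha+\omega$ — by trimming away from the top any initial stretch whose end-segment rank has dropped below $\alpha+\omega$; if the rank drops eventually everywhere we instead work with $L^*$. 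The subtle bookkeeping is to show that after discarding the "bad" (low end-segment-rank) initial part, what remains either genuinely has \textbf{(1)} and \textbf{(2)}, or the discarded part has rank $\leq\alpha+f(0)+8$, using Corollary \ref{omsum} and Lemma \ref{seg} to control the rank of an $\omega$-sum of low-rank blocks, together with Lemma \ref{sub} to replace unbounded convex pieces.

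More concretely: let $b$ range over points of $L$ and consider the set of $x$ such that $\SR(L_{\geq x})\geq\alpha+\omega$. By Lemma \ref{least} this set is downward closed, hence an initial segment $I$ of $L$, and $K := L_I$ (if nonempty) is a candidate for the part with \textbf{(1)}. Its complement $L\setminus I$ is an end segment all of whose end segments have rank $<\alpha+\omega$; by Lemma \ref{seg} (applied to $(L\setminus I)^*$ via its bounded initial segments) and Lemma \ref{sub} one replaces it, $\alpha$-equivalently, by $P$ with $\SR(P)\leq\alpha+f(0)+8$. That $K$ has \textbf{(2)} follows because any bounded initial segment $K_{<y}$ of $K$ is also $(a,b)$-type interval, and using $(\alpha,\alpha+\omega)$-smallness plus Lemma \ref{interval} its rank stays $<\alpha+\omega$ — here is where smallness is essential, since a priori $K_{<y}$ could have huge rank. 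If instead $I$ is empty, i.e. every end segment already fails to have rank $\geq\alpha+\omega$ except perhaps the whole order, one symmetrically works with $L^*$, extracting from the bottom of $L^*$ the part with \textbf{(1)} and \textbf{(2)} and leaving a low-rank $P$.

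The main obstacle I expect is verifying condition \textbf{(2)} for $K$: showing that no bounded initial segment of the "tall part" has Scott rank as large as $\alpha+\omega$. This does not follow from the definition of $K$ alone — $K$ was defined using end segments — and requires combining the $(\alpha,\alpha+\omega)$-smallness hypothesis with the interval-replacement machinery (Lemmas \ref{interval}, \ref{sub}) and a careful argument that if some $K_{<y}$ had rank $\geq\alpha+\omega$ we could find, inside $L$, continuum many $\equiv_\gamma$-inequivalent models $\alpha$-equivalent to $L$ for some $\gamma<\alpha+\omega$, contradicting smallness. A secondary annoyance is the edge cases where the relevant initial/end segments are empty or fail to have maxima/minima, which is handled by invoking Lemma \ref{cofseq} and Corollary \ref{omsum} to cut the unbounded piece into an $\omega$-sum of low-rank blocks, at the cost of the additive constants $f(0)+8$ already present in the statement.
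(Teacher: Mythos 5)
Your construction of $K$ as the set $I=\{x\in L: \SR(L_{\geq x})\geq\alpha+\omega\}$ is where the argument breaks. Since $K$ is then literally an initial segment of $L$, condition \textbf{(2)} requires that no bounded initial segment of $L$ inside $I$ has rank $\geq\alpha+\omega$, and you propose to rescue this from $(\alpha,\alpha+\omega)$-smallness. That rescue cannot work: smallness counts $\equiv_\gamma$-classes among $\equiv_\alpha$-equivalent structures and places no bound on $\SR(L_{\leq x})$; indeed the proof of Lemma \ref{satisfiespsi} works precisely with small $L$ in which $\SR(L_{\leq y})\geq\alpha+\omega$. Concretely, if $L/\sim_{\alpha+\omega}$ is dense (a case your sketch never treats), every end segment meeting two distinct classes has rank $\geq\alpha+\omega$, so $I$ is essentially all of $L$, while any point lying above two distinct classes already gives $\SR(L_{\leq x})\geq\alpha+\omega$; similarly for an order of the shape $\omega^{\alpha+\omega}+\omega^{\alpha+\omega}$ the first copy is a bounded initial segment of your $K$ of rank $\geq\alpha+\omega$. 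The missing idea is that in the paper \textbf{(2)} is not verified inside $L$ but \emph{engineered by replacement}: one isolates a single $\sim_{\alpha+\omega}$-class $[x,y]\cap[x]_{\alpha+\omega}$ (or, in the dense case, an $\omega$-sum of intervals $[x_i,x_{i+1}]$ each replaced via Lemma \ref{interval} by some $K_i$ with $\alpha+i\leq\SR(K_i)<\alpha+\omega$), replaces everything below it by a low-rank $\alpha$-equivalent $Q$ (Lemma \ref{interval}) and everything above by $P$ (Lemma \ref{sub}); the resulting $K$ is only $\alpha$-equivalent to a piece of $L$, not a suborder of it, and that is exactly why its bounded initial segments stay below $\alpha+\omega$.

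There is a secondary problem with \textbf{(1)} and with your criterion for passing to $L^*$. The tail $L\setminus I$ can itself have Scott rank $\geq\alpha+\omega$ even though all of its proper end segments are small (e.g.\ a reversed copy of $\omega^{\alpha+\omega}$), so $\SR(L_{\geq x})\geq\alpha+\omega$ does not transfer, via Lemma \ref{sum}, to $\SR(K_{\geq x})\geq\alpha+\omega$ after you excise the tail. For $L=\omega+(\omega^{\alpha+\omega})^*$ your $I$ equals the copy of $\omega$, which is nonempty but fails \textbf{(1)} completely; the correct move there is to work with $L^*$, and "$I$ is empty'' does not detect this. The paper's case analysis on $L/\sim_{\alpha+\omega}$ (single class, non-dense with an adjacent pair, dense), with the choice of $L$ versus $L^*$ made via Lemma \ref{sum} applied to the interval between adjacent classes, is what handles exactly these configurations.
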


\begin{proof}
We break the proof into cases based on the behavior of $L/\sim_{\alpha+\omega}$. We consider the case that it is a point, the case that it is not dense but is larger than a point, and the case that it is dense.

First consider the case where $L/\sim_{\alpha+\omega}$ is a point. Say there is a point $x\in L$ such that $\SR(L_{\geq x})\geq \alpha+\omega$ (if there is not, there is a point with $\SR(L_{\leq x})\geq \alpha+\omega$ and we consider $L^*$ instead). By Lemma \ref{interval}, we can replace $L_{<x}$ with a model of Scott rank at most $\alpha+f(0)$. 

Consider $y>x$. By the fact that $L/\sim_{\alpha+\omega}$ is a point, $\SR([x,y])<\alpha+\omega$, which yields that $\SR(L_{\geq y})\geq \alpha+\omega$ by Lemma \ref{leasteq}. Furthermore, Lemma \ref{leasteq} guarantees that $\SR(L_{\leq y})<\alpha+\omega$ as well. 

For $y<x$, $\SR(L_{\geq y})\geq \SR(L_{\geq x})\geq \alpha+\omega$. Furthermore, $SR(L_{\leq y})\leq \SR(L_{\leq x})< \alpha+\omega$. Therefore, every element $z\in L$ has $\SR(L_{\geq z})\geq \alpha+\omega$ and $\SR(L_{\leq z})< \alpha+\omega$, as desired.

Now consider the case where  $L/\sim_{\alpha+\omega}$ is more than one point and not dense. By non-density we know there are two equivalence classes that are adjacent to each other. Let those classes be given by $[x]_{\alpha+\omega}$ and $[y]_{\alpha+\omega}$ for some $x,y\in L$. As $x$ and $y$ are in different classes note that $\SR([x,y])\geq \alpha+\omega$. We can split the order $[x,y]$ between the elements in $x$'s equivalence class and $y$'s equivalence class to realize this order as a sum. To be more precise, we may note that
$$[x,y]=[x,y]\cap[x]_{\alpha+\omega}+[x,y]\cap[y]_{\alpha+\omega}.$$
Lemma \ref{sum} gives that one of these summands has Scott rank at least $\alpha+\omega$. Without loss of generality we assume it is $[x,y]\cap[x]_{\alpha+\omega}$ (or else we consider $L^*$).

Note that $L$ can be written as 
$$L=L_{\leq x} + [x,y]\cap[x]_{\alpha+\omega} + M,$$
for some linear order $M$. Let $Q\equiv_\alpha L_{\leq x}$ have $\SR(Q)\leq \alpha+f(0)$ and $P\equiv_\alpha M$ have $\SR(P)\leq \alpha+f(0)+8$ as in Lemmas \ref{interval} and \ref{sub}. We claim that
$$L'=Q+[x,y]\cap[x]_{\alpha+\omega} + P,$$
satisfies the desired properties.

To show this, we demonstrate that $Q+[x,y]\cap[x]_{\alpha+\omega}$ has \textbf{(1)} and \textbf{(2)}. 

We start with \textbf{(1)}. It is clearly enough to show that $[x,y]\cap[x]_{\alpha+\omega}$ is $(\alpha+\omega)$-UC. However, this follows as if $\SR(([x,y]\cap[x]_{\alpha+\omega})_{\geq z})<\alpha+\omega$, because $\SR([x,z])<\alpha+\omega$, Lemma \ref{leasteq} gives that $\SR([x,y]\cap[x]_{\alpha+\omega})<\alpha+\omega$. This is a contradiction to our choice of $[x,y]\cap[x]_{\alpha+\omega}$.

To see \textbf{(2)}, by Lemma \ref{leasteq} it is enough to show that $[x,y]\cap[x]_{\alpha+\omega}$ has \textbf{(2)}. However, this is immediate as any $z\in[x]_{\alpha+\omega}$ has $\SR([x,z])<\alpha+\omega$ by definition. Therefore, $L'$ is of the desired form.

Finally we consider the case where $L/\sim_{\alpha+\omega}$ is dense. In this case we consider an increasing sequence of points $x_i$ in separate $\sim_{\alpha+\omega}$ equivalence classes. We can write $L=M+U$ where $U$ is the set of points greater than all of the $x_i$ and $M$ is the set of points less than some $x_i$. By Lemma \ref{sub} there is a $P\equiv_\alpha U$ such that  $\SR(P)\leq \alpha+f(0)+8$. Furthermore, by Lemma \ref{interval}, for each $[x_i,x_{i+1}]$ there is a $K_i$ such that $\alpha+i\leq\SR(K_i)<\alpha+\omega$ and $K_i\equiv_\alpha[x_i,x_{i+1}]$ (by convention we let $x_0=-\infty$ here). Note that this gives
$$L\equiv_\alpha \big(\sum_{i\in\omega} K_i\big)+P.$$
Using Lemma \ref{finsum}, it is not difficult to see that $K=\sum_{i\in\omega} K_i$ has properties \textbf{(1)} and \textbf{(2)} as desired. 
\end{proof}

We can now observe that a $K+P$ in the form of the above theorem satisfies $\lnot\psi_\geq$ and $(K+P)^*$ satisfies $\lnot\psi_\leq$. In other words, the following is apparent.

\begin{corollary}\label{notpsi}
If $L$ is an $(\alpha,\alpha+\omega)$-small linear order with $SR(L)\geq\alpha+\omega$, then there is an $A$ such that $\SR(A)\geq\alpha+\omega$, $A\equiv_{\alpha} L$ and either $A\models\lnot\psi_\leq$ or $A\models\lnot\psi_\geq$.
\end{corollary}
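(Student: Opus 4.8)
The plan is to read off a suitable $A$ directly from the normal form supplied by Lemma~\ref{finding12}. Applying that lemma to $L$ gives either $L\equiv_\alpha K+P$ or $L^*\equiv_\alpha K+P$, where $K$ has conditions \textbf{(1)} and \textbf{(2)} and $\SR(P)\leq\alpha+f(0)+8$. In the first case put $A=K+P$, in the second put $A=(K+P)^*$. Reversing a linear order merely interchanges $\leq$ and $\geq$ inside formulas, which alters neither the truth value on a tuple nor the $\Pii_\beta$/$\Sii_\beta$ complexity of a formula; hence reversal preserves $\equiv_\alpha$ and preserves Scott rank, and it sends end segments to reverses of initial segments. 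So it suffices to handle $A=K+P$: the relation $A\equiv_\alpha L$ is immediate, and I will check $\SR(A)\geq\alpha+\omega$ together with $A\models\lnot\psi_\geq$; applying reversal to this case yields $(K+P)^*\models\lnot\psi_\leq$ and covers the other alternative.

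For $\SR(A)\geq\alpha+\omega$: the order $K$ is non-empty (as is clear from the proof of Lemma~\ref{finding12}) and satisfies \textbf{(1)}, so for any $x\in K$ the end segment $K_{\geq x}$ is $\Delta_0^{in}$-definable over $x$ and has $\SR\geq\alpha+\omega$, whence $\SR(K)\geq\alpha+\omega$. Each initial segment $K_{\leq y}$ with $y\in K$ equals $(K+P)_{\leq y}$ and is $\Delta_0^{in}$-definable over $y$ inside $K+P$, so $\SR(K_{\leq y})\leq\SR(K+P)$. If these ranks were all bounded by some fixed $\beta<\alpha+\omega$, Lemma~\ref{seg} would force $\SR(K)\leq\beta+4<\alpha+\omega$, contradicting the previous sentence; therefore $\SR(K+P)\geq\sup_{y\in K}\SR(K_{\leq y})\geq\alpha+\omega$.

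For $A\models\lnot\psi_\geq$ I must check that for every $x\in K+P$ the value $\SR\big((K+P)_{\geq x}\big)$ is not of the form $\alpha+i$ with $f(0)+9\leq i\leq f(f(0)+9)$. If $x\in P$ then $(K+P)_{\geq x}=P_{\geq x}$ is $\Delta_0^{in}$-definable over $x$ in $P$, so its Scott rank is at most $\SR(P)\leq\alpha+f(0)+8$, below the window. If $x\in K$ then $(K+P)_{\geq x}=K_{\geq x}+P$; since $K_{\geq x}$ is a final segment of $K$ it again satisfies \textbf{(1)}, so rerunning the argument of the previous paragraph with $K_{\geq x}$ in place of $K$ (its initial segments $[x,y]_K$ being $\Delta_0^{in}$-definable over $\{x,y\}$ inside $K_{\geq x}+P$) gives $\SR(K_{\geq x}+P)\geq\alpha+\omega$, which is above the window since $f$ is non-decreasing and hence $\alpha+f(f(0)+9)<\alpha+\omega$. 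Thus $\psi_\geq$ fails in $K+P$, and the corollary follows.

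The one step that is more than bookkeeping is the lower bound $\SR(K+P)\geq\alpha+\omega$ and its relativization to $K_{\geq x}+P$: one cannot simply say that $K$ is a $\Delta_0^{in}$-definable subobject of $K+P$, since this fails when $P$ has no least element, and a priori a tail of low Scott rank might absorb the complexity of an initial segment. The point is to use not $K$ itself but the \emph{uniformly} $\Delta_0^{in}$-definable family $\{K_{\leq y}:y\in K\}$, whose Scott ranks climb to $\alpha+\omega$ by Lemma~\ref{seg}; this is where condition \textbf{(1)} of the normal form is really used. Everything else is an application of the $\Delta_0^{in}$-definable-subobject principle and of the symmetry of the vocabulary under order reversal.
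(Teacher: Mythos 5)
Your proposal is correct and follows the paper's own route: the paper derives this corollary by simply observing that the normal form $K+P$ of Lemma \ref{finding12} satisfies $\lnot\psi_\geq$ (and $(K+P)^*$ satisfies $\lnot\psi_\leq$), which is exactly what you do, with the additional merit that you justify the step the paper leaves implicit---namely that $\SR(K_{\geq x}+P)\geq\alpha+\omega$ even when $P$ has no least element, via Lemma \ref{seg} applied to the uniformly $\Deltai_0$-definable initial segments together with condition \textbf{(1)}.
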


This combined with Lemma \ref{satisfiespsi} immediately gives our goal lemma for the section.

\begin{lemma}\label{3final}
If $L$ is an $(\alpha,\alpha+\omega)$-small linear order and $L/\sim_{\alpha+\omega}$ has 3 or more equivalence classes, there are $A$ and $B$ such that $A\equiv_\alpha B\equiv_\alpha L$ with $\SR(A),\SR(B)\geq\alpha+\omega$, yet $A\not\equiv_{\alpha+\omega}B$.
\end{lemma}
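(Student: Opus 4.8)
The plan is to combine Lemma~\ref{satisfiespsi} and Corollary~\ref{notpsi} by a one-line complexity observation and a two-case analysis; no new construction is needed. First I would record why $\psi_\leq$ and $\psi_\geq$ are coarse enough to be seen by $\equiv_{\alpha+\omega}$: each is $\Sigma^{in}_{<(\alpha+\omega)}$, hence a $\Sigma^{in}_\beta$ sentence for some $\beta<\alpha+\omega$, and since $\Sigma^{in}_\beta\subseteq\Pii_{\beta+1}$ with $\beta+1\leq\alpha+\omega$, each of $\psi_\leq,\psi_\geq$ is (equivalent to) a $\Pii_{\alpha+\omega}$ sentence. Consequently, whenever $A\equiv_{\alpha+\omega}B$, the structures $A$ and $B$ agree on the truth value of $\psi_\leq$ and on the truth value of $\psi_\geq$; contrapositively, if one of them satisfies $\psi_\leq$ (resp.\ $\psi_\geq$) while the other satisfies $\lnot\psi_\leq$ (resp.\ $\lnot\psi_\geq$), then $A\not\equiv_{\alpha+\omega}B$.

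Next I would quote the two prior results for our $L$. Since $L$ is $(\alpha,\alpha+\omega)$-small and $L/\sim_{\alpha+\omega}$ has at least three classes, Lemma~\ref{satisfiespsi} produces $A_0\equiv_\alpha L$ with $\SR(A_0)\geq\alpha+\omega$ and $A_0\models\psi_\leq$, and $B_0\equiv_\alpha L$ with $\SR(B_0)\geq\alpha+\omega$ and $B_0\models\psi_\geq$. Independently, Corollary~\ref{notpsi} produces $C\equiv_\alpha L$ with $\SR(C)\geq\alpha+\omega$ such that $C\models\lnot\psi_\leq$ or $C\models\lnot\psi_\geq$.

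Finally I would split into the (at most) two cases coming from Corollary~\ref{notpsi}. If $C\models\lnot\psi_\leq$, put $A=A_0$ and $B=C$; then $A\models\psi_\leq$ and $B\models\lnot\psi_\leq$, so by the observation of the first paragraph $A\not\equiv_{\alpha+\omega}B$. If instead $C\models\lnot\psi_\geq$, put $A=B_0$ and $B=C$; then $A\models\psi_\geq$ and $B\models\lnot\psi_\geq$, so again $A\not\equiv_{\alpha+\omega}B$. In either case $A\equiv_\alpha B\equiv_\alpha L$ and $\SR(A),\SR(B)\geq\alpha+\omega$, which is exactly the assertion of the lemma.

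I do not anticipate a genuine obstacle: the substantive work has already been carried out in Lemma~\ref{satisfiespsi} (manufacturing $\alpha$-equivalent, Scott-rank-$\geq\alpha+\omega$ models that satisfy $\psi_\leq$ and $\psi_\geq$) and Corollary~\ref{notpsi} (manufacturing such a model that falsifies one of the two). The only point needing a little care is the complexity bookkeeping guaranteeing that $\psi_\leq$ and $\psi_\geq$ lie below the $\alpha+\omega$ threshold and are therefore invariant under $\equiv_{\alpha+\omega}$, and this is immediate from $\Sigma^{in}_\beta\subseteq\Pii_{\beta+1}$ together with $\beta+1\leq\alpha+\omega$ for $\beta<\alpha+\omega$.
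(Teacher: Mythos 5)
Your proposal is correct and is essentially the paper's own argument: the paper derives Lemma \ref{3final} by combining Lemma \ref{satisfiespsi} with Corollary \ref{notpsi}, exactly as you do, with the (left implicit) observation that $\psi_\leq$ and $\psi_\geq$ are $\Sigma^{in}_{<(\alpha+\omega)}$ and hence their truth values are preserved under $\equiv_{\alpha+\omega}$. Your case split on which of $\lnot\psi_\leq$, $\lnot\psi_\geq$ the model from Corollary \ref{notpsi} satisfies is precisely the intended bookkeeping.
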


\subsection{Case 2: $L/\sim_{\alpha+\omega}$ has few elements}

The case where there are less than $3$ $\sim_{\alpha+\omega}$ equivalence classes is the critical case of the theorem. The analysis is trickier and has more subcases. Luckily, the results of the previous sections allow us to consider only orders with very specific properties. To start with, we observe that Lemma \ref{finding12} applies. In particular, without loss of generality we will write $L=K+P$ with $K$ having conditions \textbf{(1)} and \textbf{(2)}. The first case we will approach is the case that taking the quotient by $\sim_{\alpha+f(f(0)+9)+1}$ is not an ordinal. The remaining case is technical and specific, but ultimately doable. We will need to introduce one more condition to consider and then conduct an analysis of initial segments of the order. Then we will be able to prove the theorem.

\subsubsection{$L/\sim_{\alpha+f(f(0)+9)+1}$ is not an ordinal}

Let us prove the theorem in the case that we have non-ordinal behavior at the level of $\alpha+f(f(0)+9)+1$. Recall that we are assuming that $L=K+P$ as in Lemma \ref{finding12}.

\begin{lemma}\label{nonOrdPlus}
Let $L$ be $(\alpha,\alpha+\omega)$-small and such that $L/\sim_{\alpha+f(f(0)+9)+1}$ is not an ordinal. Then there are $A$ and $B$ such that  $A\equiv_\alpha B\equiv_\alpha L$, $\SR(A),\SR(B)\geq\alpha+\omega$ yet $A\not\equiv_{\alpha+\omega}B$. 
\end{lemma}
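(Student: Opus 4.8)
Since we are in Case 2, Lemma~\ref{finding12} lets us assume (replacing $L$ by $L^*$ if needed) that $L = K+P$ with $K$ having conditions \textbf{(1)} and \textbf{(2)} and $\SR(P)\le \alpha+f(0)+8$. The hypothesis is that $L/\!\sim_{\alpha+f(f(0)+9)+1}$ is not an ordinal, which by our normal form transfers to $K$: there must be a point $c\in K$ such that $K_{\geq c}/\!\sim_{\alpha+f(f(0)+9)+1}$ has no least element, i.e. there are infinitely many descending $\sim_{\alpha+f(f(0)+9)+1}$-classes cofinally approaching $c$ from above, or more precisely a convex block on which the quotient is a reverse $\omega$ or otherwise non-well-ordered. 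The plan is to exploit this non-ordinal behavior to ``graft'' different tails and obtain two models $A,B\equiv_\alpha L$ that can be distinguished at level $\alpha+\omega$. Concretely, I would isolate a convex suborder $C\subseteq K$ on which $\sim_{\gamma}$ (for $\gamma = \alpha+f(f(0)+9)+1$) is not well-ordered, use Lemma~\ref{interval}/Lemma~\ref{sub} to replace the pieces above and below $C$ by low-Scott-rank orders while preserving the $\alpha$-theory, and then perform two different modifications inside (or just above) $C$.

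The key steps, in order, would be: (i) Use Lemma~\ref{finding12} and the hypothesis to pin down the precise location of the non-well-ordered behavior inside $K$, obtaining a convex $C$ with a strictly $\sim_\gamma$-descending cofinal (or coinitial) sequence of classes and with $\SR$ of the surrounding segments controlled; since $K$ has \textbf{(1)}, every end segment of $K$ has Scott rank $\geq \alpha+\omega$, so trimming does not lose complexity. (ii) Build $A$ by replacing $C$ (via Lemma~\ref{sub}, using that $L$ and hence $C$ is $(\alpha,\alpha+\omega)$-small) with an $\alpha$-equivalent order $C'$ that is genuinely well-ordered at level $\gamma$ on the relevant block — e.g. an ordinal-like block — while keeping $\SR(A)\geq\alpha+\omega$ by retaining an \textbf{(1)}-tail. (iii) Build $B$ by instead replacing $C$ with an $\alpha$-equivalent order $C''$ that keeps the non-well-ordered level-$\gamma$ structure (indeed $C$ itself works, or a cleaned-up version via Lemma~\ref{interval} applied to finite sub-blocks). (iv) Verify $A\not\equiv_{\alpha+\omega}B$: the point is that ``$L/\!\sim_\gamma$ restricted to the definable interval around $C$ is well-ordered'' should be expressible by a $\Sii_{<\alpha+\omega}$ sentence (using Lemma~\ref{intsr} to make ``$\SR(\text{interval})<\gamma$'' a bounded-complexity formula, and the fact that $\gamma < \alpha+\omega$ so all the relevant Scott-rank assertions stay below the first limit $>\alpha$), and $A$ satisfies it while $B$ satisfies its negation. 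Finally, invoke that $\SR(A),\SR(B)\ge\alpha+\omega$ together with $A\equiv_\alpha B\equiv_\alpha L$ to get the statement.

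The main obstacle I anticipate is step (iv) combined with the internal definability bookkeeping in step (i): one needs a single $\Sii$ or $\Pii$ formula of complexity strictly below $\alpha+\omega$ that distinguishes ``the $\sim_\gamma$-quotient of a definable convex chunk is well-ordered'' from ``it is not'', \emph{uniformly} across all models $\alpha$-equivalent to $L$, and one must make sure the convex chunk in question is itself $\Delta_0^{in}$-definable (or $\Sii_{<\alpha+\omega}$-definable) from parameters so that the formula transfers between $\alpha$-equivalent models. Being non-well-ordered is a $\Pii_2$-type statement over the quotient, but here the quotient is not literally a structure we quantify over — it is coded by Scott ranks of intervals, so expressing ``there is no least $\sim_\gamma$-class above $x$'' becomes ``$\forall y>x\ \exists z\in(x,y)\ \SR((x,z))\ge\gamma$'' (roughly), and one must check this lands at complexity $<\alpha+\omega$ via Lemma~\ref{intsr} and closure under bounded quantification. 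The secondary difficulty is ensuring that the two replacements $C',C''$ both genuinely preserve $\equiv_\alpha$ (handled by Lemma~\ref{sub} and the $(\alpha,\alpha+\omega)$-smallness, as in Lemma~\ref{interval}'s proof via \cite[Remark 1.5.4]{Ste78}) \emph{and} both preserve Scott rank $\ge\alpha+\omega$, for which the retained \textbf{(1)}-tail $K_{\ge c}$ — or the $P$ part if it happens to have high rank — does the job since by Lemma~\ref{least} a $\Delta_0^{in}$-definable subpiece of Scott rank $\ge\alpha+\omega$ forces the whole order to have Scott rank $\ge\alpha+\omega$.
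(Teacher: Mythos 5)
Your step (iv) is where the argument breaks, and the problem is not mere bookkeeping. You want a single $\Sii^{\mathrm{in}}_{<\alpha+\omega}$ sentence separating ``the $\sim_\gamma$-quotient of a definable convex chunk is well-ordered'' from ``it is not'' (for $\gamma=\alpha+f(f(0)+9)+1$). Well-orderedness of the quotient, as a property of the countable structure, is $\Pi^1_1$-complete and non-Borel, so by Lopez--Escobar it is not expressible by \emph{any} $\Lomom$ sentence, let alone one of rank below $\alpha+\omega$. The local surrogates you gesture at are expressible at the right complexity but do not capture the hypothesis: your sample formula $\forall y>x\,\exists z\in(x,y)\,\SR((x,z))\ge\gamma$ only says $x$ is the maximum of its $\sim_\gamma$-class, and even a corrected ``no least $\sim_\gamma$-class above $x$'' can fail at every point while the quotient is non-well-ordered (the descending sequence of classes may converge to a cut rather than a point, e.g.\ a quotient containing a block of type $\om^*+\om$). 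Moreover, even granting a usable formula, your construction does not secure the ``well-ordered'' side: Lemmas \ref{interval} and \ref{sub} control only the $\alpha$-theory and an upper bound on the Scott rank of the replacement, not its level-$\gamma$ class structure, and excising one bad convex chunk $C$ does not remove other descending sequences of classes that may sit (for instance cofinally) inside the retained condition-\textbf{(1)} tail, which you cannot also replace without endangering $\SR\ge\alpha+\omega$.

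The paper's proof sidesteps exactly this by splitting on a rank-window sentence rather than on well-orderedness: $\psi_\leq$ asserts that some initial segment $L_{\leq x}$ has Scott rank equal to $\alpha+i$ for some $i$ in the finite window $[f(0)+9,\,f(f(0)+9)]$, which is $\Sii^{\mathrm{in}}_{<\alpha+\omega}$ by Lemma \ref{intsr}. One model satisfies $\psi_\leq$ by the kind of initial-segment replacement you also envision (an initial segment of $K$ of rank $\ge\alpha+f(0)+9$ exists via Lemma \ref{seg}, and Lemma \ref{interval} replaces it by an $\alpha$-equivalent order whose rank lands in the window, while condition \textbf{(1)} keeps the whole model of rank $\ge\alpha+\omega$). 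The non-ordinal hypothesis is then used only \emph{constructively}: it provides a descending sequence $a_i$ with $\SR((a_{i+1},a_i))\ge\alpha+f(f(0)+9)+1$, and after replacing the part below all the $a_i$ by a low-rank $\alpha$-equivalent order $Q$, every point-determined initial segment of the resulting model either lies in $Q$ (rank $\le\alpha+f(0)+8$, below the window) or contains a whole $(a_{i+1},a_i)$ (rank above the window); so $\psi_\leq$ fails because the initial-segment ranks jump over the window. To salvage your plan you would need to replace ``well-ordered quotient'' by some such finitely-describable rank invariant.
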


\begin{proof}
We are able to split on the formula $\psi_\leq$ introduced in the previous section in this case. We demonstrate first how to find a model that satisfies $\psi_\leq$ and then we show how to satisfy its negation.

We now find a suitable model that satisfies $\psi_\leq$. By Lemma \ref{seg} there must be some $x\in K$ such that $\SR(K_{< x})\geq \alpha+f(0)+9$. Using Lemma \ref{interval} there is a $C$ such that $C\equiv_\alpha K_{<x}$ with $f(0)+9\leq\SR(C)\leq\alpha+f(f(0)+9)$. Let $B:=C+L_{\geq x}$. It is clear that $B\equiv_\alpha L$ and that $\SR(B)\geq\alpha+\omega$ by condition \textbf{(1)}. Note that $x$ is a witness to $\psi_\leq$, so $B\models \psi_\leq$. 

We now find a suitable model that satisfies $\lnot\psi_\leq$. By assumption, we can find a decreasing sequence of points $a_i\in L$ such that $\SR((a_{i+1},a_i))\geq\alpha+f(f(0)+9)+1$. Moreover, if we let $S$ be the (possibly empty) set of elements in $L$ smaller than all of the $a_i$ we know from Lemma \ref{sub} that there is a $Q$ such that $S\equiv_\alpha Q$ and $\SR(Q)\leq\alpha+f(0)+8$. We can now define the order $A$ as follows:
$$A:=Q+\Big(\sum_{i\in\omega^*}(a_{i+1},a_i)+1\Big)+L_{>a_0}.$$
Note that $A\equiv_\alpha L$ by construction. For the sake of parsimony, we can also define $M$ as follows:
$$M:=\sum_{i\in\omega^*}(a_{i+1},a_i)+1,$$
so that $A=Q+M+L_{>a_0}.$

We show that $A\models\lnot\psi_\leq$ by considering a possible witness to the formula. Take $x\in A$. If $x\in Q$ we have that 
$$\SR(A_{\leq x})=\SR(Q_{\leq x})\leq\SR(Q)\leq\alpha+f(0)+8,$$
so $x$ is not a witness to $\psi$. Instead consider $x\in M+L_{>a_0}.$ By the construction of $M$ there is some $i$ such that $x$ is greater than all of the elements of $(a_{i+1},a_i)$. Given this, we see that
$$\SR(A_{\leq x})\geq\SR((a_{i+1},a_i))\geq\alpha+f(f(0)+9)+1,$$
as $(a_{i+1},a_i)$ is an interval within $A_{\leq x}$. Thus, $x$ is not a witness to $\psi$ in this case either. As these cases are exhaustive, there are no witnesses to $\psi_\leq$ in $A$, so $A\models\lnot\psi_\leq$ as desired.

Finally, we note that $\SR(A)\geq\alpha+\omega$ as it contains $L_{>a_0}$ as an end segment which necessarily has Scott rank $\alpha+\omega$ by condition \textbf{(1)} for $K$, noting that $L_{>a_0}=K_{>a_0}+P$. This means that $A$ and $B$ have the required conditions, so we have proved the lemma.
\end{proof}

Therefore, moving forward we can always assume that $L/\sim_{\alpha+f(f(0)+9)+1}$ is an ordinal.

\subsubsection{A third condition}

We now introduce a third important condition exhibited by some linear orders. This condition formalizes the idea that no matter what replacement you do, the order remains relatively small. 

\begin{definition}
We will say that a linear order, $L$, satisfies condition \textbf{(3)} if for all $x\in L$ we have that $L_{\leq x}\not\equiv_\alpha L+M$ for any linear order $M$.
\end{definition}

Unlike condition \textbf{(1)} and condition \textbf{(2)}, condition \textbf{(3)} is not something seen in ordinal examples. We desire to assume condition \textbf{(3)} for $K$ where $L=K+P$ as we move forward. While technical, it is needed to dispose of the last cases explored in this proof. In this section we consider the case that $K$ does not have condition  \textbf{(3)} so we are able to assume it in the future cases.

\begin{lemma}\label{noCon3}
Let $L$ be $(\alpha,\alpha+\omega)$-small and such that $L=K+P$ where $K$ has \textbf{(1)} but not \textbf{(3)}. Then there are $A$ and $B$ such that  $A\equiv_\alpha B\equiv_\alpha L$, $\SR(A),\SR(B)\geq\alpha+\omega$ yet $A\not\equiv_{\alpha+\omega}B$. 
\end{lemma}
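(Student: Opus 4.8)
The plan is to exploit the failure of condition \textbf{(3)} for $K$ directly. Since $K$ does not satisfy \textbf{(3)}, there is some $x\in K$ and some linear order $M$ with $K_{\leq x}\equiv_\alpha K+M$. The idea is that this gives us an internal ``copy'' of (something $\alpha$-equivalent to) all of $K$ sitting as an initial segment of $K$, so we can self-embed $K$ and thereby inflate or deflate initial segments freely while keeping the $\alpha$-theory fixed. Concretely, I would first use Lemma \ref{interval} (and the remark that initial segments are $(\alpha,\alpha+\omega)$-small) to replace $K_{\leq x}$ by a linear order $C$ with $C\equiv_\alpha K_{\leq x}$ and bounded Scott rank, say $\SR(C)\le \alpha+f(0)$; since $C\equiv_\alpha K+M$, Remark~1.5.4 of \cite{Ste78} lets us freely prepend $C$ or $K+M$ in front of $K_{>x}$ without changing the $\alpha$-theory of $L=K+P$. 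This already produces one model $B\equiv_\alpha L$ with small initial segments; combined with condition \textbf{(1)} of $K$, $\SR(B)\ge\alpha+\omega$ is automatic because $B$ still has an end segment of the form $K_{>x}+P$, whose Scott rank is $\ge\alpha+\omega$.

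For the splitting, I would again try to separate two $\alpha$-equivalent models via the formula $\psi_\leq$ from the previous subsection (or $\psi_\geq$). On one side, the model $B$ just constructed has $\SR(B_{\le y})$ small for small $y$, but one can arrange a witness to $\psi_\le$ by replacing $C$ with something of Scott rank between $\alpha+f(0)+9$ and $\alpha+f(f(0)+9)$: since $K_{\le x}\equiv_\alpha K+M$ has Scott rank $\ge\alpha+\omega$ (as $K$ has \textbf{(1)}), $K_{\le x}$ itself has an initial segment of every Scott rank below $\alpha+\omega$, so Lemma \ref{interval} supplies the needed replacement $C'$ with $\alpha+f(0)+9\le \SR(C')\le\alpha+f(f(0)+9)$, and then $C'+K_{>x}+P\models\psi_\le$ with Scott rank $\ge\alpha+\omega$ by \textbf{(1)}. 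On the other side, using the self-similarity $K_{\le x}\equiv_\alpha K+M$ iteratively, I would build a model $A\equiv_\alpha L$ that is an $\omega^*$-indexed sum of copies of pieces each of Scott rank $\ge \alpha+f(f(0)+9)+1$ — in effect pushing the ``non-ordinal'' structure into the bottom — so that, exactly as in Lemma \ref{nonOrdPlus}, every potential witness $x$ to $\psi_\le$ in $A$ lies above an interval of Scott rank $\ge\alpha+f(f(0)+9)+1$ (hence $\SR(A_{\le x})$ is too large for the bounded disjunction in $\psi_\le$) or lies in a bounded-rank initial piece; thus $A\models\lnot\psi_\le$. The end segment $K_{>a_0}+P$ again forces $\SR(A)\ge\alpha+\omega$ via \textbf{(1)}.

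Finally, having produced $A\models\lnot\psi_\le$ and $B\models\psi_\le$, both $\equiv_\alpha L$, both of Scott rank $\ge\alpha+\omega$, and since $\psi_\le$ is $\Sinf{<(\alpha+\omega)}$ by Lemma \ref{intsr}, it follows that $A\not\equiv_{\alpha+\omega}B$, which is exactly what the lemma asserts. I would also need to double-check that $L$ genuinely falls under this case only once, i.e. that the construction of $A$ does not accidentally make $L/\sim_{\alpha+f(f(0)+9)+1}$ an ordinal again — but that is fine since the failure of \textbf{(3)} is a hypothesis we may simply carry, and we are free to choose whichever of $\psi_\le$, $\psi_\ge$ actually splits (by passing to $L^*$ if necessary).

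The main obstacle I anticipate is making the ``$\omega^*$-chain of non-small intervals'' construction for $A$ actually work from the hypothesis $K_{\le x}\equiv_\alpha K+M$: one needs to extract, from a single instance of self-similarity, an infinite descending sequence of intervals each of Scott rank $\ge\alpha+f(f(0)+9)+1$, and to verify via Corollary \ref{omsum} and the replacement lemmas that the resulting order is genuinely $\alpha$-equivalent to $L$ and has the right (unbounded-below) shape for $\lnot\psi_\le$. The bookkeeping of which initial segments to replace, and keeping their Scott ranks in the correct finite window relative to $\alpha$, is where the real care is needed; the rest is routine given Lemmas \ref{interval}, \ref{sub}, \ref{seg} and the earlier case analysis.
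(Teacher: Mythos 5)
Your proposal diverges from the paper at the crucial point, and the divergence is exactly where it breaks down. The paper's proof is short: from the failure of \textbf{(3)} take $x$ with $K_{\leq x}\equiv_\alpha K+M$ and substitute this equivalence \emph{twice}, getting $L\equiv_\alpha K+M+K_{>x}+M+K_{>x}+P$; since $K_{>x}$ inherits \textbf{(1)}, intervals spanning the successive blocks have Scott rank $\geq\alpha+\omega$, so this $\alpha$-equivalent order has at least three $\sim_{\alpha+\omega}$-classes, and Lemma \ref{3final} (the already-proved ``3 or more classes'' case) finishes the job. You instead try to re-run the $\psi_\leq$ splitting of Lemma \ref{nonOrdPlus} directly, which requires producing a model of $\lnot\psi_\leq$, i.e.\ an $\omega^*$-descending configuration of intervals of Scott rank $\geq\alpha+f(f(0)+9)+1$. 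But the hypothesis here gives no such configuration: a single $\alpha$-equivalence $K_{\leq x}\equiv_\alpha K+M$ does not produce actual high-rank intervals sitting below points of $L$ ($\equiv_\alpha$ does not preserve Scott rank — indeed, under \textbf{(2)} one has $\SR(K_{\leq x})<\alpha+\omega$), and your plan to iterate the self-similarity infinitely often and pass to an $\omega^*$-sum has no justification that the limit order is still $\equiv_\alpha L$: the replacement tools available (Lemma \ref{interval}, Lemma \ref{sub}, Steel's remark on sums) only let you replace summands over the \emph{same} index order, whereas your limit changes the shape of the index order itself. You flag this as ``the main obstacle,'' and it is precisely the unproved step; nothing in the paper's toolkit closes it, and in the remaining case of the overall argument $L/\sim_{\alpha+f(f(0)+9)+1}$ may genuinely be an ordinal, so no model $\alpha$-equivalent to $L$ of the shape you want need exist.

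Two smaller points: your justification of the $\psi_\leq$-witness (``$K_{\leq x}$ has an initial segment of every Scott rank below $\alpha+\omega$'') is both unjustified and unnecessary — the correct route, as in Lemma \ref{nonOrdPlus}, is Lemma \ref{seg} applied to $K$ (whose Scott rank is $\geq\alpha+\omega$ by \textbf{(1)}) to get an initial segment of rank $\geq\alpha+f(0)+9$, then Lemma \ref{interval}. And note that finitely many substitutions of the non-\textbf{(3)} equivalence already suffice, because the three-class case has been handled once and for all; the whole point of Lemma \ref{noCon3} in the paper is this reduction, not a fresh splitting construction.
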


\begin{proof}
Let $x\in K$ witness that $K_{\leq x}\equiv_{\alpha} K+M$. Now note that
$$L=K+P = K_{\leq x}+ K_{>x}+P\equiv_\alpha K+M+K_{>x}+P\equiv_\alpha K+M+K_{>x}+M+K_{>x}+P.$$
Because $K$ has \textbf{(1)} so does $K_{>x}$ as all final segments of $K_{>x}$ are also final segments of $K$. In particular, if $w$ comes from the first $K$, $y$ from the first $K_{>x}$ and $z$ from the second $K_{>x}$ we see that $\SR([w,y])\geq\alpha+\omega$ and $\SR([y,z])\geq\alpha+\omega$. In other words, $(K+M+K_{>x}+M+K_{>x}+P)/\sim_{\alpha+\omega}$ has at least 3 equivalence classes. Therefore, we can apply Lemma \ref{3final} to obtain the result.
\end{proof}

We can now move forward assuming condition \textbf{(3)}.

\subsubsection{The final case}

We now prove the theorem in the case that $L=K+P$ where $L$ is $(\alpha,\alpha+\omega)$-small, $K$ satisfies \textbf{(1)}, \textbf{(2)} and \textbf{(3)} and such that $L/\sim_{\alpha+f(0)+10}$ is not an ordinal. There are a couple steps here, so this will be broken up into several subsections for organization. In particular, we will first need to understand the behavior of linear orders who have quotients that are ordinals a bit better. Then we will consider cases based on what sorts of orders are $\alpha$ equivalent to the initial segments of $K$.

\subsubsection{The comparability lemma}
In this subsection we aim to show that linear orders with limit ordinal quotients behave like ordinals in the sense that $M\sqsubseteq N$ (recall this means that $M$ is initial in $N$) if and only if each initial segment of $M$ is initial in $N$. Note that this is not true in general. For example, $\mathbb{Z}$ is not initial in $\omega^*$ despite the fact that all of its initial segments are. In order to show such a result, we must make the conversion to ordinals even more carefully than before. This motivates the following definition.

\begin{definition}
Given a linear order $K$ and a $k\in\omega$, call 
$$\text{Iso}(\alpha+k,L) = \{[x]_{\sim_{\alpha+k}}|x\in K\}/\cong.$$
 Because this is a countable set, we can fix an bijective coloring $c:\text{Iso}(\alpha+k,K)\to \omega$. We define the structure $C(K)$ as a model of the theory of linear orders with countable colors $(K/\sim_{\alpha+k},<,\{P_i\}_{i\in\omega})$, where $[x]\in P_i$ if and only if $c([x])=i$. 
\end{definition}

In particular, the colored ordinal $C(K)$ does not lose information about the original linear order in the way that $K/\sim_{\alpha+k}$ does. This will allow us to use embeddings of colored ordinals to construct embeddings of the original orders in question. With this in mind, for the sake of parsimony we write $K\sqsubseteq_cN$ if there is a colored-order-initial-embedding from $K$ to $N$. We can now prove the desired result.

\begin{lemma}\label{compare}
Consider linear orders $M$ and $N$ such that $M/\sim_{\alpha+k}$ is a limit ordinal for some $k\in\omega$. $M\sqsubseteq N$ if and only if $M_{\leq x}\sqsubseteq N$ for all $x\in M$.
\end{lemma}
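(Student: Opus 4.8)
The forward direction is trivial: if $M \sqsubseteq N$ then certainly every initial segment $M_{\leq x}$ of $M$ is an initial segment of $N$. The content is in the converse, so assume $M_{\leq x} \sqsubseteq N$ for every $x \in M$, and that $M/\!\sim_{\alpha+k}$ is a limit ordinal $\lambda$. The plan is to use the colored-ordinal machinery just introduced: pass from $M$ and $N$ to the colored structures $C(M)$ and $C(N)$ (with respect to the level $\alpha + k$), show that the hypothesis on initial segments of $M$ translates into the statement that every proper initial segment of $C(M)$ admits a colored-order-initial-embedding into $C(N)$, and then exploit the rigidity of (colored) ordinals to glue these embeddings into a single colored-order-initial-embedding $C(M) \sqsubseteq_c C(N)$. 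The point of working with $C(M)$ rather than $M/\!\sim_{\alpha+k}$ is exactly the remark preceding the lemma: the $\mathbb{Z}$-vs-$\omega^*$ obstruction disappears once $M/\!\sim_{\alpha+k}$ is an ordinal, because an initial segment of an ordinal is determined by the supremum of its elements, and the coloring lets us recover each $\sim_{\alpha+k}$-class up to isomorphism, hence recover (a linear order $\alpha$-equivalent to, and with enough structure to reconstruct) $M$ itself.

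The key steps, in order: (1) Record that because $M/\!\sim_{\alpha+k} = \lambda$ is a limit ordinal, each element $x \in M$ lies in a $\sim_{\alpha+k}$-class indexed by some $\mu < \lambda$, and $M_{\leq x}$ corresponds to the initial segment of $C(M)$ of classes with index $\leq \mu$, which is a \emph{proper} initial segment of $C(M)$ since $\lambda$ is a limit. (2) From $M_{\leq x} \sqsubseteq N$, extract a colored-order-initial-embedding of the corresponding proper initial segment of $C(M)$ into $C(N)$ — here I need that an honest order-initial inclusion $M_{\leq x} \sqsubseteq N$ induces the appropriate map on $\sim_{\alpha+k}$-classes respecting colors; this should follow since $\sim_{\alpha+k}$ is defined by an internal condition ($\SR$ of an interval being $< \alpha+k$) and is therefore preserved under initial-segment inclusions, and the colors are isomorphism types of the classes, which are likewise preserved. (3) Since the domains of these embeddings are proper initial segments of the ordinal-indexed structure $C(M)$ that together exhaust $C(M)$, and since embeddings of well-orders are essentially unique (any two colored-order-initial-embeddings of an initial segment of $C(M)$ into $C(N)$ agree, by transfinite induction on the well-order $\lambda$), the embeddings cohere; take their union to get $C(M) \sqsubseteq_c C(N)$. (4) Finally, translate $C(M) \sqsubseteq_c C(N)$ back to $M \sqsubseteq N$: a colored-order-initial-embedding of $C(M)$ into $C(N)$ matches each $\sim_{\alpha+k}$-class of $M$ with an isomorphic class of $N$ forming an initial segment, and concatenating these isomorphisms over the well-ordered index set yields an isomorphism of $M$ onto an initial segment of $N$.

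The main obstacle I anticipate is step (3), the coherence/uniqueness of the initial embeddings: I want to say that because the index structure is a well-order, the initial-embedding is rigid and the pieces must fit together, but I should be careful that a colored-order-initial-embedding really is unique on initial segments of a well-order — this uses that an initial-segment-preserving map out of an ordinal is forced to be the identity-like map determined by matching the least unmatched element at each stage, and that the colors only constrain, never create ambiguity. A secondary subtlety is making sure, in step (4), that the passage $C(M) \sqsubseteq_c C(N) \Rightarrow M \sqsubseteq N$ genuinely recovers an \emph{initial} copy of $M$ and not merely a convex or $\alpha$-equivalent copy; this is where it matters that we chose the coloring to record isomorphism types of classes exactly (as in the definition of $C(K)$), so that "same color" means "isomorphic class" and the concatenation of class-isomorphisms along the initial embedding is a genuine isomorphism onto an initial segment. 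Everything else — step (1), step (2), the forward direction — is bookkeeping with the definitions of $\sim_{\alpha+k}$, $\sqsubseteq$, and $C(K)$.
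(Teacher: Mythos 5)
Your overall route is the paper's: pass to the colored quotients $C(M)$ and $C(N)$, show that every proper initial segment of $C(M)$ admits a color-preserving initial embedding into $C(N)$, glue these by the rigidity of initial embeddings of well-orders, and reconstruct $M\sqsubseteq N$ by summing the class isomorphisms; your steps (3) and (4) match the paper and are fine. The gap is in steps (1)--(2), at the top class. $M_{\leq x}$ does \emph{not} correspond to the classes of index $\leq\mu$: it contains only the truncation $[x]_{\leq x}$ of the class of $x$, whose isomorphism type (hence color) can differ from that of the full class $[x]$. Worse, when $M_{\leq x}$ sits as an initial segment of $N$, elements near its top may be $\sim_{\alpha+k}$-equivalent to elements of $N$ lying above the image, so the topmost class of the image can be absorbed into a strictly larger class of $N$. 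Your claim that the inclusion ``induces the appropriate map on $\sim_{\alpha+k}$-classes respecting colors'' therefore fails exactly at the top, and the intended colored embedding of the classes of index $\leq\mu$ into $C(N)$ is simply not available from the hypothesis. (The relation restricted to the image is preserved, as you say, because intervals between points of an initial segment are computed the same way in $M$ and in $N$; it is only the top class that is the problem, but it is a genuine one.)

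The repair is where the limit hypothesis actually earns its keep, and it is the paper's move: since $M/\sim_{\alpha+k}$ has no greatest class, given $[x]$ choose $[z]>[x]$ and apply the hypothesis to $M_{\leq z}=\sum_{[y]<[z]}[y]+[z]_{\leq z}$. The classes $[y]$ with $[y]<[z]$ are \emph{full} classes of $M$, and they remain full classes of $N$ with the same isomorphism type: if some $a$ in such a class were $N$-equivalent to an element above the image, then monotonicity of $\SR$ on definable convex suborders would give $\SR((a,z))<\alpha+k$, forcing $a\sim_{\alpha+k} z$, a contradiction. Discarding the partial top piece $[z]_{\leq z}$ yields $C(M)_{<[z]}\sqsubseteq_c C(N)$, hence $C(M)_{\leq [x]}\sqsubseteq_c C(N)$, and these segments exhaust $C(M)$ precisely because the quotient is a limit ordinal; from there your steps (3)--(4) go through verbatim. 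So the argument is repairable, but as written the color-preservation claim at the top class is false, and the repair (passing strictly above $[x]$ and dropping the truncated class) is the one piece of real work in the paper's proof that your outline omits.
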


\begin{proof}
Assume that if $M_{\leq x}\sqsubseteq N$ for all $x\in M$. We aim to show that $C(M)_{\leq [x]}\sqsubseteq_c C(N)$ for all $[x]\in C(M)$ and then use this to show that $M\sqsubseteq N$.

Fix some $[x]\in C(M)$. By assumption $C(M)$ does not have a largest element. Let $[z]>[x]$. Given an element of $[y]\in C(M)$, by abuse, we let $[y]$ also denote the linear order that corresponds to the color of $[y]$ in $C(M)$. With this in mind, we can note that
$$M_{\leq z} = \sum_{[y]<[z]}[y] + [z]_{\leq z}$$
by construction. By assumption, $M_{\leq z}\sqsubseteq N$, and therefore
$$ \sum_{[y]<[z]}[y] + [z]_{\leq z}\sqsubseteq N.$$
Given two elements $a,b$ in this initial segment the interval between them in $N$ is exactly the same as the interval between them in $M$. Therefore, if $a,b\in[y]$ they still have the property that $\SR((a,b))<\alpha+k$ and if $a\in[y]$ and $b\in[y']$ for $[y]\neq [y']$ they still have the property that $\SR((a,b))\geq\alpha+k$. Therefore, 
$$C\big( \sum_{[y]<[z]}[y]\big)=C(M)_{<[z]},$$
and $C(M)_{<[z]}$ must be initial in $C(N)$. As $[z]>[x]$, we have that $C(M)_{\leq [x]}\sqsubseteq_c C(N)$.

Because we showed this for any $[x]$, all initial segments of $C(M)$ are initial in $C(N).$ Because $C(M)$ is a colored ordinal, by explicit recursive construction we have that $C(M)\sqsubseteq_c C(N)$. However, it immediately follows that
$$M= \sum_{[y]\in C(M)}[y] \sqsubseteq \sum_{[y']\in C(N)}[y']=N,$$
giving the desired result.
\end{proof}

\subsubsection{The non-$\alpha$ closed case}
This section we consider the case that $L$ is $(\alpha,\alpha+\omega)$-small, $L/\sim_{\alpha+f(0)+10}$ is an ordinal and that $L=K+P$ with $K$ satisfying \textbf{(1)},\textbf{(2)} and \textbf{(3)}; this is the case that is remaining given the work done above. That being said, we must break this proof for this into two sections depending on if the initial segments of $K$ are closed under $\alpha$ equivalence (i.e. if all initial segment are only $\alpha$ equivalent to orders isomorphic to another initial segment). We handle the case where the initial segments of $L$ are not closed under $\alpha$ equivalence first. 

\begin{lemma}\label{nonClosed}
If $K$ is a linear order such that 
\begin{itemize}
	\item $K$ satisfies \textbf{(1)}, \textbf{(2)} and \textbf{(3)},
	\item there is some $x\in K$  and linear order $N$ such that, $K_{\leq x}\equiv_\alpha N$ and for all $y\in K$, $N\not\cong K_{\leq y}$,
\end{itemize}
and $L=K+P$, then there are $A$ and $B$ such that $\SR(A),\SR(B)\geq\alpha+\omega$, and $A\equiv_\alpha B\equiv_\alpha L$ yet $A\not\equiv_{\alpha+\omega}B$. 
\end{lemma}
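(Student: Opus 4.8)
The plan is to keep $B:=L$ and to obtain $A$ from $L$ by substituting one initial segment of $L$ by an $\alpha$-equivalent \emph{intruder}: a linear order that is $\equiv_\alpha$ to an initial segment of $K$ but isomorphic to no initial segment of $K$. The hypothesis already produces an intruder $N$ with $K_{\leq x}\equiv_\alpha N$ and $N\not\cong K_{\leq y}$ for all $y\in K$; but in order for the separating sentence to have complexity $<\alpha+\omega$ I first want an intruder of Scott rank $<\alpha+\omega$.

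To extract one, note that $K$, being an initial segment of $L$, is $(\alpha,\alpha+\omega)$-small, hence so is $K_{\leq x}$, hence so is every structure that is $\equiv_\alpha K_{\leq x}$ --- in particular $N$. If $\SR(N)<\alpha+\omega$ we are done. Otherwise $\SR(N)\geq\alpha+n$ for every $n<\omega$, so Lemma \ref{prime} applied to $N$ at level $\alpha+n$ gives $N_n$ with $N_n\equiv_{\alpha+n}N$ and $\alpha+n\leq\SR(N_n)\leq\alpha+f(n)<\alpha+\omega$. If some $N_n$ is isomorphic to no initial segment of $K$, take it as the intruder; otherwise each $N_n\cong K_{\leq w_n}$, so $K$ has initial segments $K_{\leq w_n}$ with $K_{\leq w_n}\equiv_{\alpha+n}N$ and $\SR(K_{\leq w_n})\geq\alpha+n$. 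These initial segments form a $\sqsubseteq$-chain, and I would derive a contradiction: if the chain stabilizes at some $K_{\leq w^*}$ then $K_{\leq w^*}\equiv_{\alpha+\omega}N$, and since $\SR(K_{\leq w^*})<\alpha+\omega$ this forces $N\cong K_{\leq w^*}$; if it does not stabilize, I use the standing reduction that $L/\!\sim_{\alpha+f(0)+10}$ --- hence $K/\!\sim_{\alpha+f(0)+10}$ and every initial segment of it --- is an ordinal, together with the comparability lemma (Lemma \ref{compare}), to match up the initial segments of $N$ with those of the $K_{\leq w_n}$ at high finite levels and conclude $N\sqsubseteq K$; then $N$ is isomorphic to an initial segment of $K$ (not to $K$ itself, since $N\equiv_\alpha K_{\leq x}\not\equiv_\alpha K$ by condition \textbf{(3)}), again a contradiction.

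Let $N'$ be an intruder with $\SR(N')<\alpha+\omega$ and $N'\equiv_\alpha K_{\leq x}$, and put $B:=L$, $A:=N'+K_{>x}+P=N'+L_{>x}$. By the substitution remark \cite[Remark 1.5.4]{Ste78}, $A\equiv_\alpha K_{\leq x}+L_{>x}=L=B$. By condition \textbf{(2)} the order $K$ has no greatest element, so choosing $z\in K$ with $z>x$, the set $K_{\geq z}$ is a $\Delta^{in}_0$-definable subset of the final segment $L_{>x}$ of $A$; hence condition \textbf{(1)} gives $\SR(A)\geq\SR(K_{\geq z})\geq\alpha+\omega$, and likewise $\SR(B)=\SR(L)\geq\SR(K)\geq\alpha+\omega$. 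Now let $\sigma$ be a Scott sentence of $N'$; as $\SR(N')<\alpha+\omega$, $\sigma\in\Sigma^{in}_{<\alpha+\omega}$, and so is the relativized sentence $\Theta\colon \exists w\,\sigma^{\leq w}$, which says ``some initial segment is isomorphic to $N'$''. Since $\alpha$ exceeds the quantifier rank of the axioms of linear orders, $N'\equiv_\alpha K_{\leq x}$ has a greatest element $x'$, so $A_{\leq x'}=N'\models\sigma$ and $A\models\Theta$. On the other hand, for each $w\in L=K+P$ the initial segment $L_{\leq w}$ is either $K_{\leq w}$ (when $w\in K$), and then $N'\not\cong L_{\leq w}$ because $N'$ is an intruder, or of the form $K+M$ with $M$ nonempty (when $w\in P$), and then $N'\not\cong L_{\leq w}$ because $N'\equiv_\alpha K_{\leq x}\not\equiv_\alpha K+M$ by condition \textbf{(3)}; thus $B\models\lnot\Theta$. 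Since $\equiv_{\alpha+\omega}=\bigcap_{n<\omega}\equiv_{\alpha+n}$ and $\Theta$ lies at a finite level above $\alpha$, we get $A\not\equiv_{\alpha+\omega}B$, as desired.

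The main obstacle is the second paragraph --- extracting an intruder of Scott rank $<\alpha+\omega$. I expect the delicate point there to be making Lemma \ref{compare} applicable, i.e.\ checking that the relevant orders have an $\sim_{\alpha+k}$-quotient that is a limit ordinal; this is precisely what the standing reduction ``$L/\!\sim_{\alpha+f(0)+10}$ is an ordinal'' and the colored-ordinal machinery around Lemma \ref{compare} are set up to provide, so I would route the argument through colored ordinals rather than through individual initial segments of $N$.
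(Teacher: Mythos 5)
Your endgame (third paragraph) is fine and is essentially a mirror image of the paper's argument: given an ``intruder'' $N'$ with $N'\equiv_\alpha K_{\leq x}$, $N'$ isomorphic to no $K_{\leq y}$, and crucially $\SR(N')<\alpha+\omega$, splitting $L$ against $N'+L_{>x}$ on the sentence ``some initial segment is isomorphic to $N'$'' works, with condition \textbf{(3)} killing witnesses in $P$ exactly as you say. The genuine gap is the second paragraph: the existence of an intruder of Scott rank $<\alpha+\omega$ is not given by the hypotheses, and your extraction sketch does not go through. First, your route to a contradiction in the non-stabilizing case is to apply Lemma \ref{compare} with $M=N$; but \ref{compare} requires $M/\!\sim_{\alpha+k}$ to be a \emph{limit} ordinal, while $N\equiv_\alpha K_{\leq x}$ (with $\alpha\geq 2$) has a greatest element, so $N/\!\sim_{\alpha+k}$ always has a greatest class and the lemma is never applicable to $N$. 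Second, even ignoring that, to feed \ref{compare} you would need every (or cofinally many) initial segment of $N$ to be initial in $K$, and the only data you have is $N\equiv_{\alpha+n}K_{\leq w_n}$; converting such finite-level equivalences into isomorphisms of initial segments requires a uniform bound below $\alpha+\omega$ on the Scott ranks of $N$'s initial segments, i.e.\ a condition-\textbf{(2)}-like property for $N$. That property is not expressible at level $\alpha$ and hence is not inherited from $K_{\leq x}$; $N$ may well have initial segments of rank at or above $\alpha+\omega$. So the ``main obstacle'' you flag is a real hole, not a routine verification.

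It is worth seeing how the paper's proof sidesteps the need for a rank-bounded intruder altogether, because the asymmetry is the whole point. The paper forms $K_N:=N+K_{>x}$ and $L_N:=K_N+P$ and splits on a sentence about an initial segment of $K$ itself, namely $\exists w\,(\,\cdot_{\leq w}\cong K_{\leq z})$ for some $K_{\leq z}$ that is \emph{not} initial in $K_N$: here the rank control needed to keep the sentence $\Sii_{<\alpha+\omega}$ comes for free from condition \textbf{(2)}, and $\SR(N)$ is irrelevant since the sentence never mentions $N$ ($L$ satisfies it, $L_N$ does not, witnesses in $P$ being excluded because their initial segments have rank $\geq\alpha+\omega$). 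The remaining case, in which every initial segment of $K$ is initial in $K_N$ --- hence, by the greatest-element argument, initial in $N$ --- is refuted by applying Lemma \ref{compare} to $K$ (whose $\sim$-quotient is a limit ordinal in the standing context, unlike $N$'s), giving $K\sqsubseteq N$ and contradicting \textbf{(3)}. In other words, the comparability lemma must be aimed at $K$, not at $N$; your dichotomy points it at the one structure it cannot handle. As written, your proof does not establish the lemma.
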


\begin{proof}
Consider $K_N:=N+K_{>x}$ where $N$ is the order assumed to exist above, and let $L_N=K_N+P$.

We first consider the case where $K_N$ is missing some initial segment of $K$, say $K_{\leq z}$. Then, formally speaking
$$K_N\models \lnot\exists w ~ K_{N,\leq w}\cong K_{\leq z}.$$ 
Note also that 
$$L_N\models \lnot\exists w ~ K_{N,\leq w}\cong K_{\leq z},$$ 
as possible witnesses $w\in P$ would have $\SR(L_{\leq w})\geq\alpha+\omega$ disqualifying the possibility of isomorphism with $K_{\leq z}$ which has strictly lower Scott rank that $\alpha+\omega$ by condition \textbf{(2)}.
Call this formula $\psi$. By condition \textbf{(2)} on $K$ and Lemma \ref{intsr}, $\psi$ has complexity less than $\alpha+\omega$. Note that $L\models\lnot\psi$ as $z$ itself is a witness to the property. Therefore, we can take $A=L$ and $B=L_N$ to complete the proof in this case. 

We now only need to consider the case where all initial segments of $K$ are initial in $K_N$. In other words, for each $z\in K$ there is a $y\in K_N$ such that $K_{\leq z}\cong K_{N,\leq y}$. Note that because $N$ is not initial in $K$ it is not initial in any of its initial segments. Thus, for each $z$, its corresponding $y\in K_N=N+K_{>x}$ cannot come from the $K_{>x}$ summand, or else $K_{N,\leq y}\cong K_{\leq z}$ has $N$ as an initial segment. Therefore, all initial segments of $K$ must actually be initial in $N$. However, by Lemma \ref{compare} this means that $K\sqsubseteq N$. This is a contradiction to condition \textbf{(3)}. Thus, this case cannot occur and we have completed the proof.
\end{proof}

\subsubsection{The $\alpha$ closed case}
In this section we complete the proof in the case that $K$ satisfies \textbf{(1)},\textbf{(2)} and \textbf{(3)}. The only case that remains is very specific. In particular, it is the case that the initial segments of $K$ are closed under $\alpha$ equivalence (i.e. for each $x\in K$ and $N\equiv_\alpha K_{\leq x}$ for some $y\in K$ we have that $N\cong K_{\leq y}$) and that $L/\sim_{\alpha+f(f(0)+9)+1}$ is an ordinal. We will see that this is, in fact, impossible. 

\begin{lemma}\label{Closed}
If $K$ satisfies \textbf{(1)} and \textbf{(3)} and $K/\sim_{\alpha+f(f(0)+9)+1}$ is an ordinal, then it is not possible that all models $\alpha$-equivalent to some $K_{\leq x}$ for some $x\in K$ are isomorphic to $K_{\leq y}$ for some $y\in K$.
\end{lemma}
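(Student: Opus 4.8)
The goal is to derive a contradiction from the assumption that $K$ satisfies \textbf{(1)} and \textbf{(3)}, that $K/\!\sim_{\alpha+f(f(0)+9)+1}$ is an ordinal, and that the initial segments of $K$ are closed under $\alpha$-equivalence. The plan is to exploit the tension between conditions \textbf{(1)} and \textbf{(3)}: condition \textbf{(1)} forces every end segment of $K$ to be complicated (Scott rank $\geq\alpha+\omega$), while the ordinal-like behaviour of the quotient together with $\alpha$-closure of initial segments should force $K$ to ``absorb'' a proper initial segment of itself, producing $K_{\leq x}\equiv_\alpha K+M$ and violating \textbf{(3)}.

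First I would use the replacement machinery (Lemma~\ref{interval}) and condition \textbf{(1)} to produce, for a suitable $x\in K$, a linear order $N\equiv_\alpha K_{\leq x}$ whose Scott rank lies strictly between $\alpha$ and $\alpha+\omega$ — concretely with $\SR(N)\leq\alpha+f(f(0)+9)$ — while $\SR(K_{\leq x})$ is already large (this is possible since by Lemma~\ref{seg} some $K_{<x}$ has $\SR\geq\alpha+f(0)+9$). By the $\alpha$-closure hypothesis, $N\cong K_{\leq y}$ for some $y\in K$; but then $\SR(K_{\leq y})<\alpha+\omega$, so by condition \textbf{(2)}-type reasoning (or directly by Lemma~\ref{leasteq}) the point $y$ sits below all the ``mass'' of $K$. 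The key move is then to iterate: since $K_{\leq y}$ is itself $\alpha$-equivalent to something of even smaller Scott rank, and initial segments are $\alpha$-closed, one produces an infinite descending chain of candidate copies, which is incompatible with $K/\!\sim_{\alpha+f(f(0)+9)+1}$ being an ordinal (ordinals are well-founded, so there is no infinite descending sequence of distinct equivalence classes). Alternatively, and I think more cleanly, I would argue directly: using Lemma~\ref{compare} (the comparability lemma), show that $K_{\leq x}\equiv_\alpha N$ with $N$ of low Scott rank forces $N\sqsubseteq K$ through an initial-segment-by-initial-segment argument, and then combine $N\cong K_{\leq y}$ with condition \textbf{(1)} on the end segment $K_{>y}$ to build an order $\alpha$-equivalent to $K_{\leq x}$ but properly containing a copy of $K$ as an initial segment, i.e. witnessing $K_{\leq x}\equiv_\alpha K+M$, contradicting \textbf{(3)}.

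More precisely, the chain of reasoning I would carry out is: (i) fix $x$ with $\SR(K_{<x})\geq\alpha+f(0)+9$ via Lemma~\ref{seg}; (ii) apply Lemma~\ref{interval} to get $N\equiv_\alpha K_{\leq x}$ with $\SR(N)\leq\alpha+f(f(0)+9)$; (iii) invoke $\alpha$-closure to get $y\in K$ with $K_{\leq y}\cong N$, noting $\SR(K_{\leq y})<\alpha+\omega$; (iv) observe that since $\SR(K_{\leq y})$ is bounded strictly below $\alpha+\omega$ but $K$ has \textbf{(1)}, $y$ cannot be cofinal, so there is an honest end segment $K_{>y}$ with $\SR(K_{>y})\geq\alpha+\omega$; (v) now $K_{\leq x}\equiv_\alpha K_{\leq y}+K_{>y}\equiv_\alpha K_{\leq y}$ would be absurd on Scott-rank grounds unless we are more careful, so instead use the $\alpha$-closure and comparability to show the initial segments of $K$ form a well-ordered-under-$\sqsubseteq$ family indexed by the ordinal $K/\!\sim_{\alpha+f(f(0)+9)+1}$, deduce that $K_{\leq x}$ being $\alpha$-equivalent to $K_{\leq y}$ with $y$ possibly different from $x$ is controlled by this ordinal, and finally push the argument to show $K\sqsubseteq N\cong K_{\leq y}\sqsubseteq K$, so by Lemma~\ref{lin} $K\cong K_{\leq y}$, which gives $K_{\leq x}\equiv_\alpha K\equiv_\alpha K+K_{>y}$ — directly contradicting \textbf{(3)}.

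The main obstacle I anticipate is step (v): making the bookkeeping between the $\alpha$-equivalence $K_{\leq x}\equiv_\alpha N$, the genuine isomorphism $N\cong K_{\leq y}$, and the initial-segment relation precise enough to invoke Lemma~\ref{compare} and then Lemma~\ref{lin}. One has to verify that the quotient hypothesis ($K/\!\sim_{\alpha+f(f(0)+9)+1}$ an ordinal, hence a \emph{limit} ordinal once we check it has no top — which follows from \textbf{(1)}) really does apply to the orders to which we want to apply the comparability lemma, and that the colored-ordinal bookkeeping of Definition~\ref{compare}'s setup is compatible with the replacements we performed. Once the comparability lemma applies, the contradiction with \textbf{(3)} should be immediate; the delicacy is entirely in arranging the hypotheses, and in handling the possibility that the relevant quotient is a successor rather than a limit (in which case one peels off the top block and recurses, or observes that a successor ordinal quotient gives a definable largest block, again contradicting \textbf{(1)}).
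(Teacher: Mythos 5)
There is a genuine gap, and it sits exactly where you flagged your own worry, in step (v). Your plan only ever produces orders $\alpha$-equivalent to a proper initial segment $K_{\leq x}$ (the replacement $N$ from Lemma~\ref{interval}), and the closure hypothesis then just tells you $N\cong K_{\leq y}$ for some $y$ -- which is not in tension with anything: nothing so far prevents every low-rank replacement of an initial segment from simply being another initial segment. To contradict \textbf{(3)} you must exhibit some $x$ and $M$ with $K_{\leq x}\equiv_\alpha K+M$, i.e.\ you need a structure $\alpha$-equivalent to \emph{all of} $K$ that is forced to sit inside $K$ as a proper initial segment. Your proposed bridge, ``$K\sqsubseteq N\cong K_{\leq y}\sqsubseteq K$, so by Lemma~\ref{lin} $K\cong K_{\leq y}$,'' has no justification and cannot be repaired: $K\sqsubseteq N\cong K_{\leq y}$ would make $K$ an initial segment of its own proper initial segment, giving $\SR(K)\leq\SR(K_{\leq y})\leq\alpha+f(f(0)+9)<\alpha+\omega$ and contradicting \textbf{(1)}, so this statement is refutable rather than provable under your hypotheses. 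Moreover, even granting the two embeddings, Lemma~\ref{lin} needs an initial embedding one way and a \emph{final} embedding the other way; mutual initial embeddability of linear orders does not imply isomorphism (e.g.\ $1+\mathbb{Q}$ and $1+\mathbb{Q}+1$ are each isomorphic to initial segments of one another). Your alternative sketch (an infinite descending chain of classes contradicting well-foundedness) is likewise not developed into anything that actually produces a descending sequence of $\sim$-classes.

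The missing idea is the one the paper's proof turns on: use $\SR(K)\geq\alpha+\omega$ (from \textbf{(1)}) to get a structure $D\not\cong K$ with $D\equiv_{\alpha+n+m}K$, where $n,m$ bound the complexities (via Lemmas~\ref{bound} and~\ref{intsr}) of two $\Pii_{<\alpha+\omega}$ sentences true of $K$: $\psi$, saying every initial segment is $\alpha$-equivalent to some $K_{\leq y}$, and $\chi$, saying intervals of rank $\geq\alpha+f(f(0)+9)+1$ occur cofinally. Then $D\models\psi\wedge\chi$; the closure assumption upgrades $\psi$ in $D$ to ``every initial segment of $D$ is isomorphic to an initial segment of $K$,'' so $D/\!\sim_{\alpha+f(f(0)+9)+1}$ is an ordinal, a limit ordinal by $\chi$, and Lemma~\ref{compare} yields $D\sqsubseteq K$, in fact $D\sqsubset K$ since $D\not\cong K$. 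Writing $K_{\leq b}=D+M$ for a bound $b$ of $D$ in $K$ gives $K_{\leq b}\equiv_\alpha K+M$, contradicting \textbf{(3)}. Note also that your use of Lemma~\ref{interval} imports an $(\alpha,\alpha+\omega)$-smallness hypothesis not stated in this lemma; the paper's argument avoids it entirely by working with $D$ rather than with replacements of initial segments.
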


\begin{proof}
For the sake of contradiction assume that all models $\alpha$-equivalent to some $K_{\leq x}$ for some $x\in K$ are isomorphic to $K_{\leq y}$ for some $y\in K$. Note that $K$ has the property that each of its initial segments is $\alpha$-equivalent to one of its initial segments (namely itself). More formally, note that
$$K\models  \forall x\bigvee_{y\in K} ~ K_{\leq x} \equiv_\alpha K_{\leq y}.$$
Call this formula $\psi$. By Lemma \ref{bound} this formula is $\Pii_{\alpha+n}$ for some $n\in\omega$. Moreover, note that as $K$ satisfies \textbf{(1)} we have that
$$K\models \forall x\exists y ~ \SR((x,y))\geq \alpha+f(f(0)+9)+1.$$
Call this formula $\chi$. By Lemma \ref{intsr} this formula is $\Pii_{\alpha+m}$ for some $m\in\omega$. 

As $\SR(K)\geq\alpha+\omega$, there is some $D\not\cong K$ such that $K\equiv_{\alpha+n+m} D$. Note that
$$D\models\psi\land\chi.$$
Because of $\psi$ and the initial assumption, this means that all initial segments of $D$ are initial in $K$. Because of this, for any $x\in D$, $D_{\leq x}/\sim_{\alpha+f(f(0)+9)+1}$ must be an ordinal as it is initial in an ordinal. Thus, $D/\sim_{\alpha+f(f(0)+9)+1}$ is also an ordinal as all of its initial segments are ordinals. Furthermore, because of $\chi$, we have that $D/\sim_{\alpha+f(f(0)+9)+1}$  has no greatest element, so it must be a limit ordinal. Therefore, by Lemma \ref{compare} and $\psi$, $D\sqsubseteq K$. As they are non isomorphic, in fact, $D\sqsubset K$. Therefore there is some bound $b$ for $D$ in $K$. Now note that for some $M$,
$$K_{\leq b}=D+M\equiv_\alpha K+M,$$
A contradiction to condition \textbf{(3)}.
\end{proof} 

With this we have proven the following Lemma that simply puts this together this with Lemma \ref{nonClosed}.

\begin{lemma}\label{123}
If $K$ satisfies \textbf{(1)}, \textbf{(2)}  and \textbf{(3)} and $L=K+P$ is $(\alpha,\alpha+\omega)$-small, then there are $A$ and $B$ such that $A\equiv_\alpha B\equiv_\alpha L$, yet $A\not\equiv_{\alpha+\omega}B$. 
\end{lemma}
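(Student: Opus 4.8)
The plan is to assemble the case analysis prepared in the preceding lemmas; essentially no new work is needed. We are given $K$ with conditions \textbf{(1)}, \textbf{(2)}, \textbf{(3)} and $L=K+P$ that is $(\alpha,\alpha+\om)$-small.

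First I would split on whether $L/\sim_{\alpha+f(f(0)+9)+1}$ is an ordinal. If it is not, then $L$ satisfies exactly the hypotheses of Lemma \ref{nonOrdPlus}, which produces the required $A$ and $B$ directly, and we are done. So assume $L/\sim_{\alpha+f(f(0)+9)+1}$ is an ordinal. Since $f$ is non-decreasing and satisfies $f(n)\geq n$, we have $\SR(P)\leq\alpha+f(0)+8<\alpha+f(f(0)+9)+1$, so every interval contained in $P$ has Scott rank strictly below $\alpha+f(f(0)+9)+1$; hence all of $P$ lies in a single $\sim_{\alpha+f(f(0)+9)+1}$-class of $L$ (possibly amalgamated with the top class of $K$). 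Consequently a non-ordinal configuration in $K/\sim_{\alpha+f(f(0)+9)+1}$ would persist in $L/\sim_{\alpha+f(f(0)+9)+1}$, so $K/\sim_{\alpha+f(f(0)+9)+1}$ is an ordinal as well.

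Now I would split on whether the initial segments of $K$ are closed under $\alpha$-equivalence. If there are $x\in K$ and a linear order $N$ with $K_{\leq x}\equiv_\alpha N$ but $N\not\cong K_{\leq y}$ for every $y\in K$, then $K$ meets all the hypotheses of Lemma \ref{nonClosed} (it has \textbf{(1)}, \textbf{(2)}, \textbf{(3)} by assumption, and the relevant segments of $L$ inherit $(\alpha,\alpha+\om)$-smallness as noted after Lemma \ref{interval}), and that lemma supplies the required $A$ and $B$. Otherwise every model $\alpha$-equivalent to an initial segment of $K$ is isomorphic to some initial segment of $K$; but $K$ satisfies \textbf{(1)} and \textbf{(3)} and $K/\sim_{\alpha+f(f(0)+9)+1}$ is an ordinal, so Lemma \ref{Closed} rules this out. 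Thus this branch is vacuous, and in all cases we obtain $A\equiv_\alpha B\equiv_\alpha L$ with $\SR(A),\SR(B)\geq\alpha+\om$ and $A\not\equiv_{\alpha+\om}B$.

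Since all the genuine content lives in Lemmas \ref{nonOrdPlus}, \ref{nonClosed}, and \ref{Closed}, the only thing needing care here is that the case split is exhaustive and that the hypothesis ``the relevant quotient is an ordinal'' transfers between $L$ and $K$; this is the one spot where a short computation intervenes, namely checking $\SR(P)<\alpha+f(f(0)+9)+1$ and that appending a summand of low Scott rank cannot turn an ordinal quotient into a non-ordinal one. Everything else is bookkeeping, so I do not anticipate a serious obstacle in this final assembly step.
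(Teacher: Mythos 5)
Your proposal is correct and follows essentially the same assembly as the paper: dispose of the case that $L/\sim_{\alpha+f(f(0)+9)+1}$ is not an ordinal via Lemma \ref{nonOrdPlus}, then split on whether the initial segments of $K$ are closed under $\alpha$-equivalence and invoke Lemma \ref{nonClosed} or Lemma \ref{Closed}. One cosmetic remark: the transfer of the ordinal hypothesis from $L$ to $K$ needs no bound on $\SR(P)$, since $K$ is initial in $L$ and intervals between points of $K$ are the same in both orders, so $K/\sim_{\alpha+f(f(0)+9)+1}$ is an initial segment of $L/\sim_{\alpha+f(f(0)+9)+1}$ and hence an ordinal.
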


Putting this all together with Lemma \ref{nonClosed}, Lemma \ref{noCon3}, Lemma \ref{nonOrdPlus} and Lemma \ref{3final} allows us to conclude our goal.

\begin{theorem}
For any $(\alpha,\alpha+\omega)$-small linear order $L$ with $\SR(L)\geq\alpha+\omega$, there are $A$ and $B$ such that $A\equiv_\alpha B\equiv_\alpha L$, yet $A\not\equiv_{\alpha+\omega}B$. Therefore, the theory of linear orders satisfies $\om$-VC.
\end{theorem}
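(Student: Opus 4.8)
The plan is to invoke the criterion from Lemma \ref{lem: VC splitting} directly. Since the vocabulary $\tau=\{\leq\}$ and the base sentence $S$ is the (finitely axiomatizable, hence $\Pii_2$) theory of linear orders, establishing $\om$-VC for $S$ amounts to verifying condition (\ref{part: VC splitting 2}): for every $\a<\om_1$ greater than the quantifier rank of $S$ and every model $L$ of $S$ that is $(\a,\a+\om)$-small and has Scott rank $\geq\a+\om$, produce another linear order $L'$ (automatically a model of $S$) with $L'\equiv_\a L$, $\SR(L')\geq\a+\om$, and $L\not\equiv_{\a+\om}L'$. So the entire burden of the theorem is the final displayed theorem of the excerpt, and the proof of the main theorem is just: quote Lemma \ref{lem: VC splitting} to pass from this splitting statement to the bound $\vo(T)\leq\a+\om$ for every $\Pii_\a$-extension $T$ of the theory of linear orders.

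First I would set up the machinery for moving around inside the $\equiv_\a$-class of $L$: the replacement lemmas (Lemmas \ref{prime}, \ref{interval}, \ref{sub}) let us swap intervals, initial/end segments, and convex suborders for $\a$-equivalent orders of controlled (finite-over-$\a$) Scott rank, using $(\a,\a+\om)$-smallness crucially so that only countably many types need to be omitted. I would also record the Scott-rank arithmetic for sums: $\SR(A+1+B)=\max(\SR(A),\SR(B))$ (Lemma \ref{leasteq}), $\SR(A+B)\leq\max+4$ (Lemma \ref{sum}), the $\om$-sum bound $\SR(\sum_\om A_i)\leq\gamma+8$ (Corollary \ref{omsum}), and the end-segment bound $\SR(L)\leq\sup_x\SR(L_{\leq x})+4$ (Lemma \ref{seg}). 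With these, I would introduce the convex equivalence relation $\sim_{\alpha+\om}$ (quotienting by "the open interval has Scott rank $<\a+\om$") and split on $|L/\!\sim_{\alpha+\om}|$.

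The case $|L/\!\sim_{\alpha+\om}|\geq 3$ (Lemma \ref{3final}) is the clean one: pick $x<y<z$ in distinct classes; then $L_{\leq y}$ and $L_{\geq y}$ both have Scott rank $\geq\a+\om$ (each contains a $\Delta^{\itt}_0$-definable interval of that rank), so by Lemma \ref{interval} we can replace one side by something of Scott rank pinned in the finite window $[\a+f(0)+9,\a+f(f(0)+9)]$, thereby satisfying $\psi_\leq$ (resp.\ $\psi_\geq$) in an $\a$-equivalent model; and via the normal-form Lemma \ref{finding12} ($L\equiv_\a K+P$ or $L^*\equiv_\a K+P$ with $K$ satisfying conditions \textbf{(1)},\textbf{(2)} and $\SR(P)\leq\a+f(0)+8$) we get an $\a$-equivalent model satisfying $\lnot\psi_\geq$ or $\lnot\psi_\leq$. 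Since $\psi_\leq,\psi_\geq$ are $\Sii_{<\a+\om}$, disagreement on them witnesses $A\not\equiv_{\a+\om}B$. For $|L/\!\sim_{\alpha+\om}|\leq 2$ — the genuinely hard case — I would reduce (via Lemmas \ref{nonOrdPlus}, \ref{noCon3}) to $L=K+P$ with $K$ satisfying \textbf{(1)},\textbf{(2)},\textbf{(3)} and with $L/\!\sim_{\alpha+f(f(0)+9)+1}$ an ordinal, and then show (Lemma \ref{Closed}, using the comparability Lemma \ref{compare} that limit-ordinal quotients force "all initial segments initial $\Rightarrow$ initial") that either the initial segments of $K$ fail to be closed under $\a$-equivalence — in which case a $\Pii_{<\a+\om}$ or $\Sii_{<\a+\om}$ sentence about realizing a given initial-segment isomorphism type splits the class (Lemma \ref{nonClosed}) — or they are closed, which combined with condition \textbf{(3)} is outright contradictory.

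The main obstacle is exactly this last, "$\a$-closed," subcase: one must rule out a linear order $K$ with \textbf{(1)},\textbf{(2)},\textbf{(3)} all whose initial segments are closed under $\a$-equivalence and whose $\sim_{\alpha+f(f(0)+9)+1}$-quotient is an ordinal. The argument is to take $D\not\cong K$ with $D\equiv_{\alpha+n+m}K$ where $\Pii_{\alpha+n}$ expresses closure of initial segments and $\Pii_{\alpha+m}$ expresses condition \textbf{(1)} (both available below $\a+\om$ via Lemmas \ref{bound} and \ref{intsr}); then $D$ inherits that all its initial segments are initial in $K$ and that $D/\!\sim$ is a limit ordinal, so $D\sqsubset K$ by Lemma \ref{compare}, giving a bound $b$ with $K_{\leq b}=D+M\equiv_\a K+M$, contradicting \textbf{(3)}. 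Getting the bookkeeping right — that all the splitting sentences $\psi_\leq,\psi_\geq,\psi,\chi$ stay strictly below the first limit ordinal past $\a$, which is precisely why robust (parameterized) Scott rank and Lemma \ref{leasteq} are indispensable — is where the care lies; the final theorem is then just the assembly of Lemmas \ref{3final}, \ref{nonOrdPlus}, \ref{noCon3}, \ref{nonClosed}, \ref{Closed} followed by an appeal to Lemma \ref{lem: VC splitting}.
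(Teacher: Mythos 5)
Your proposal is correct and follows essentially the same route as the paper: verify condition (\ref{part: VC splitting 2}) of Lemma \ref{lem: VC splitting} via the replacement lemmas, the case split on $|L/\!\sim_{\alpha+\omega}|$ with the splitting sentences $\psi_\leq,\psi_\geq$, the normal form $K+P$ with conditions \textbf{(1)}--\textbf{(3)}, the comparability lemma, and the closed/non-closed dichotomy ending in the contradiction with \textbf{(3)}. This matches the paper's own assembly of Lemmas \ref{3final}, \ref{nonOrdPlus}, \ref{noCon3}, \ref{nonClosed}, and \ref{Closed}, followed by the appeal to Lemma \ref{lem: VC splitting}.
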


\end{document}